\documentclass[11pt,a4paper,reqno]{amsart}
\usepackage[T1]{fontenc}
\usepackage{lmodern}
\usepackage{amsmath,amssymb,amsthm,mathtools}
\usepackage[margin=30mm]{geometry}
\usepackage{microtype}
\usepackage{enumitem}
\usepackage[hidelinks]{hyperref}
\usepackage{bookmark}
\usepackage{booktabs}
\usepackage{graphicx} 
\usepackage{url}
\usepackage{amsmath,amsfonts,amssymb,amsthm}
\usepackage{geometry}
\usepackage{color}
\usepackage{algorithm}
\usepackage{algpseudocode}
\usepackage{comment}
\hypersetup{pdftitle={Automorphic transcendence degree, growth, and skew normalization},
  pdfsubject={Skew polynomial rings and relative growth}}
\numberwithin{equation}{section}
\newtheorem{theorem}{Theorem}[section]
\newtheorem{proposition}[theorem]{Proposition}
\newtheorem{lemma}[theorem]{Lemma}
\newtheorem{corollary}[theorem]{Corollary}
\theoremstyle{definition}
\newtheorem{definition}[theorem]{Definition}
\newtheorem{example}[theorem]{Example}

\theoremstyle{remark}
\newtheorem{remark}[theorem]{Remark}
\theoremstyle{plain}
\newtheorem*{theoremA}{Theorem A}
\newtheorem*{theoremB}{Theorem B}
\newtheorem*{theoremC}{Theorem C}
\newtheorem*{theoremD}{Theorem D}
\DeclareMathOperator{\atrdeg}{atrdeg}
\DeclareMathOperator{\Aut}{Aut}
\DeclareMathOperator{\Inn}{Inn}
\DeclareMathOperator{\Out}{Out}
\DeclareMathOperator{\Supp}{supp}
\DeclareMathOperator{\LE}{LE}
\DeclareMathOperator{\lexp}{le}
\DeclareMathOperator{\GKdim}{GKdim}
\DeclareMathOperator{\rgdim}{rgdim}

\DeclareMathOperator{\Span}{span}
\DeclareMathOperator{\Ad}{Ad}
\newcommand{\N}{\mathbb N_0}
\newcommand{\Z}{\mathbb Z}
\newcommand{\Q}{\mathbb Q}

\newcommand{\id}{\operatorname{id}}
\newcommand{\bx}{\overline{x}}

\newcommand{\dL}{\mathcal{L}_D}
\newcommand{\dR}{\mathcal R_D}
\newcommand{\ldim}[1]{\dim({}_D #1)}
\newcommand{\rdim}[1]{\dim(#1{}_D)}
\newcommand{\doi}[1]{\href{https://doi.org/#1}{\nolinkurl{#1}}}
\newcommand{\arxiv}[1]{\href{https://arxiv.org/abs/#1}{arXiv:\nolinkurl{#1}}}
\setlist[enumerate,1]{label=\textup{(\roman*)},leftmargin=2.3em,itemsep=2pt}
\allowdisplaybreaks[2]

\title[Automorphic transcendence degree]{Automorphic transcendence degree, growth, and skew normalization}

\author{ Dinh Van Hoang}
\address[Dinh Van Hoang]{Faculty of Advanced Education, Ho Chi Minh City University of Technology and Engineering, Vietnam }
\email{hoangdv@hcmute.edu.vn}

\author{Vo Ngoc Thieu}
 \address[Vo Ngoc Thieu]{Department of Computer Science, University of Bath, United Kingdom}
\email{ntv22@bath.ac.uk}

\author{Phan Thanh Toan}
\address[Phan Thanh Toan]{Analytical and Algebraic Methods in Optimization Research Group, Faculty of Mathematics and Statistics, Ton Duc Thang University, Ho Chi Minh City, Vietnam}
\email{phanthanhtoan@tdtu.edu.vn}

\date{}
\subjclass[2020]{Primary 16S36, 16P90; Secondary 16S20, 13P10}
\keywords{Division ring, skew polynomial ring, automorphic transcendence degree,
relative growth, Noether normalization, Gr\"obner basis}

\begin{document}
\begin{abstract}
We study automorphic transcendence degree for quotients of multivariate skew polynomial rings over a division ring \(D\). We identify this invariant with relative growth dimension, the degree of the relative Hilbert polynomial, and a coordinate dimension determined by leading monomials. We establish invariance under finite module extensions, integral extensions, and investigate the Gelfand-Kirillov dimension under a local finiteness condition. When the coefficient automorphisms are independent modulo inner automorphisms, we prove that a quotient is automorphically normalizable precisely when its automorphic transcendence degree equals the number of nonnilpotent coordinate variables, and classify all such normalization subrings.
\end{abstract}
\maketitle
\markboth{AUTOMORPHIC TRANSCENDENCE DEGREE}{AUTOMORPHIC TRANSCENDENCE DEGREE}
\tableofcontents
\section{Introduction}\label{sec:intro}

Noether normalization connects algebraic independence with the structure
and dimension of a finitely generated commutative algebra. If $A$ is a
nonzero finitely generated algebra over a field $k$, there exist
algebraically independent elements $y_1,\ldots,y_d\in A$ such that $A$
is finite as a module over $k[y_1,\ldots,y_d]$. The number $d$ is the
Krull dimension of $A$; it is also the largest size of an algebraically
independent family in $A$ and the degree of its cumulative Hilbert
polynomial. Thus independence, growth, and finite generation over a
polynomial subring give complementary descriptions of the same
dimension. Over a division ring, this relationship raises two closely
related questions: which polynomial subrings should measure
transcendence, and when does a subring of the resulting dimension
provide a finite normalization?

Recent work on skew Noether normalization gives a natural setting for
these questions. Paran and Vo~\cite{ParanVo2025} proved normalization
for quotients of polynomial rings in central variables over a division
ring, established sufficient conditions in the presence of commuting
coefficient automorphisms, and exhibited skew polynomial quotients for
which normalization fails. Hoang and Toan~\cite{HoangToan2026}
extended the study to multivariate skew polynomial rings with derivations.
These results motivate a dimension theory that remains meaningful
even when a finite normalization does not exist. In this paper, we
develop such a theory through automorphic transcendence degree and
use it to characterize and classify normalizations for a class of
skew polynomial quotients.

Let $D$ be a division ring of characteristic zero. Our main results
concern extensions with a presentation
\begin{equation}\label{eq:intro-presentation_0}
 S=P/I,\qquad
 P=D[x_1,\ldots,x_m;(\sigma_1,\delta_1),\ldots,
                         (\sigma_m,\delta_m)],
\end{equation}
where $I$ is a proper two-sided ideal, the variables commute pairwise,
and
\[
 x_i a=\sigma_i(a)x_i+\delta_i(a)\qquad(a\in D).
\]
Here $\sigma_i\in\Aut(D)$, each $\delta_i$ is a
$\sigma_i$-derivation, and the coefficient maps satisfy
\begin{equation}\label{eq:intro-compatibility}
 \sigma_i\sigma_j=\sigma_j\sigma_i,\qquad
 \sigma_i\delta_j=\delta_j\sigma_i,\qquad
 \delta_i\delta_j=\delta_j\delta_i
 \qquad(i\ne j).
\end{equation}
These compatibility conditions ensure that the ordered monomials
$x^\alpha$ form a basis over $D$. We call $S/D$ a
\emph{compatible Ore quotient}, or an \emph{automorphically finitely
generated extension}. The pairwise commutativity of the variables
is part of this hypothesis; arbitrary iterated Ore extensions can
have different independence properties.

For any unital extension $D\subseteq S$, an element $s\in S$ is
\emph{automorphic over $D$} if
\begin{equation}\label{eq:intro-action_0}
 sa=\sigma(a)s+\delta(a)\qquad(a\in D)
\end{equation}
for some $\sigma\in\Aut(D)$ and some $\sigma$-derivation $\delta$.
A tuple $s_1,\ldots,s_r$ is \emph{automorphically independent over
$D$} if its elements commute, each is automorphic, and its indexed
monomials $(s_1^{\alpha_1}\cdots s_r^{\alpha_r})_{\alpha\in\N^r}$
are left $D$-linearly independent. We denote the supremum of the
lengths of these tuples by $\atrdeg_D(S)$, allowing the value
$\infty$. We prove that left and right independence are equivalent
and that an independent tuple generates a compatible skew polynomial
subring. A \emph{skew normalization} of $S/D$ is  subring generated by a tuple of automorphically independent elements, over
which $S$ is finite as a left module. Both automorphic elements and
normalizations are understood here to allow derivation terms, as in
\cite[Definition~3.5]{HoangToan2026}.

To relate this invariant to growth, let $F_qS$ be the image in $S$ of
the polynomials of total degree at most $q$, and set
\[
 h_S(q)=\ldim{F_qS},\qquad
 \rgdim_D(S)=\limsup_{q\to\infty}\frac{\log h_S(q)}{\log q}.
\]
Fix a degree-compatible monomial
order, let $\LE(I)$ be the set of leading exponents of nonzero
elements of $I$, and put $\Delta=\N^m\setminus\LE(I)$. For
$J\subseteq\{1,\ldots,m\}$, write $\N^J$ for the exponent vectors
supported in $J$. Our first main result is the following theorem.

\begin{theoremA}
Let $S=P/I$ be a compatible Ore quotient as in
\eqref{eq:intro-presentation_0}. Then
\[
 d:=\atrdeg_D(S)
   =\rgdim_D(S)
   =\max\{|J|:\N^J\subseteq\Delta\}.
\]
The function $h_S(q)$ is eventually a polynomial of degree $d$ with
positive leading coefficient, and
\[
 h_S(q)=\Theta((q+1)^d),\qquad
 \lim_{q\to\infty}\frac{\log h_S(q)}{\log q}=d.
\]
Moreover, $d$ is the largest length of any commuting tuple whose
indexed monomials are left $D$-linearly independent,  without
requiring the elements to be automorphic. A subtuple of the coordinate
images $\bx_1,\ldots,\bx_m$ attains this maximum.
\end{theoremA}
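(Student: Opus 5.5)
The plan is to establish the standard-monomial (Gröbner/Macaulay) description of $S$ first, and then obtain each equality from it by counting.

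\textbf{Standard monomials.} The compatibility conditions \eqref{eq:intro-compatibility} guarantee that for $a\in D$ we have $x^\alpha a=\sigma^\alpha(a)x^\alpha+(\text{lower total degree})$, where $\sigma^\alpha=\sigma_1^{\alpha_1}\cdots\sigma_m^{\alpha_m}$. This gives left multiplication a well-defined leading exponent. Because the order is degree-compatible, we can run left division by a left Gröbner basis of $I$. This will show that the images $\{\bx^\alpha:\alpha\in\Delta\}$ form a left $D$-basis of $S$. It will further show that $\{\bx^\alpha:\alpha\in\Delta,\ |\alpha|\le q\}$ is a basis of $F_qS$, since reducing an element of total degree at most $q$ never raises its degree. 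Hence
\[
h_S(q)=\#\{\alpha\in\Delta: |\alpha|\le q\}.
\]
The set $\LE(I)$ is a monoid ideal of $\N^m$: it is stable under adding $e_i$, since $x_i f$ has leading exponent $\lexp(f)+e_i$. So $\Delta$ is an order ideal. We then apply the classical Macaulay/Stanley decomposition. By Dickson's lemma, $\Delta$ is a finite disjoint union of translated coordinate cones $\beta+\N^{J}$. Each such cone contributes a polynomial in $q$ of degree $|J|$ with positive leading coefficient. Moreover, $\beta+\N^J\subseteq\Delta$ forces $\N^J\subseteq\Delta$, because $\Delta$ is downward closed. Therefore $h_S$ is eventually a polynomial of degree $d_0:=\max\{|J|:\N^J\subseteq\Delta\}$ with positive leading coefficient. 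This yields $h_S(q)=\Theta((q+1)^{d_0})$, the existence of the limit, and $\rgdim_D(S)=d_0$.

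\textbf{Lower bound for $\atrdeg$.} Take $J$ attaining $d_0$. The images $\bx_j$ for $j\in J$ commute and are automorphic, since $\bx_j a=\sigma_j(a)\bx_j+\delta_j(a)$. Their monomials are the $\bx^\alpha$ with $\alpha\in\N^J\subseteq\Delta$, and these are part of the left basis, hence independent. So $\atrdeg_D(S)\ge d_0$, and this subtuple of coordinate images realizes the maximum once the upper bound is proved.

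\textbf{Upper bound (main obstacle).} I expect this to be the main step. We must show that any commuting tuple $s_1,\dots,s_r$ with left $D$-independent monomials, not necessarily automorphic, has $r\le d_0$. Choose $c$ with every $s_i\in F_cS$. Then every monomial $s^\alpha$ with $|\alpha|\le q$ lies in $F_{cq}S$. Here we use that $F_aS\cdot F_bS\subseteq F_{a+b}S$, which holds because the filtration is multiplicative: commuting $x$'s past coefficients does not raise degree. Independence then gives
\[
\binom{q+r}{r}\le h_S(cq)=O(q^{d_0}),
\]
and letting $q\to\infty$ forces $r\le d_0$. The delicate points are checking that left independence is the correct notion to compare with $\ldim{F_{cq}S}$, and verifying multiplicativity of the filtration under the derivation terms. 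Once these are in place, all three quantities coincide and both the automorphic and non-automorphic suprema equal $d$.
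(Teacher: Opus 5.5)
Your proposal is correct. The standard-monomial basis, the coordinate lower bound, and the estimate $\binom{q+r}{r}\le h_S(cq)$ for arbitrary commuting independent tuples are exactly the paper's arguments (Lemma~\ref{lem:standard}, Theorem~\ref{thm:growth}).

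Where you differ is in how you get the shape of $h_S$. The paper uses two separate tools. Lemma~\ref{lem:counting} gives $\binom{q+d}{d}\le h_S(q)\le C(q+1)^d$ by a large-coordinate count: if $M$ exceeds every coordinate of every minimal element of $\LE(I)$, then each $\alpha\in\Delta$ has at most $d$ coordinates $\ge M$. Proposition~\ref{prop:hilbert} then gets eventual polynomiality from inclusion--exclusion over those minimal elements. Because that sum is signed, its degree and leading sign are recovered only by comparing with the $\Theta$-bounds. Your decomposition of $\Delta$ into finitely many disjoint cones $\beta+\N^J$ gives polynomiality, positivity and degree at once, since every cone contributes $\binom{q-|\beta|+|J|}{|J|}$ with a plus sign.

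Two points need tightening. First, such a decomposition does not follow from Dickson's lemma alone; it needs its own short argument. For instance, induct on $m$ by slicing $\Delta$ along the last coordinate; the slices stabilize beyond the largest last coordinate of a minimal element of $\LE(I)$. Second, as written you only show $\deg h_S\le d_0$. Add that $\N^{J_0}\subseteq\Delta$ with $|J_0|=d_0$ forces $h_S(q)\ge\binom{q+d_0}{d_0}$, so some cone has dimension $d_0$.

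The paper's route yields the explicit rational Hilbert series in terms of the minimal elements of $\LE(I)$. Yours is the more direct proof of the qualitative statement. It also makes the presentation-independence of $\rgdim_D(S)$ (Proposition~\ref{prop:filtrations}) automatic, since $\atrdeg_D(S)$ is intrinsic.
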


The proof combines standard monomial bases with polynomial growth
bounds. Leading exponents determine the filtered dimensions, while
an independent commuting $r$-tuple contributes at least
$\binom{q+r}{r}$ independent monomials in filtration degree at most
a constant multiple of $q$. The use of leading monomials to study
Hilbert functions and growth is part of the established Gr\"obner
theory of noncommutative polynomial rings~\cite{Bueso2003}; related
computations for differential difference algebras and their modules
appear in~\cite{ZhaoZhang2016}. Here this approach identifies the
independence invariant, gives an explicit rational expression for
the cumulative Hilbert series, and computes the dimension from the
supports of the minimal leading exponents.

This comparison also places automorphic transcendence degree in
relation to other noncommutative dimension theories. Zhang studied
Gelfand--Kirillov transcendence degree and lower transcendence
degree~\cite{Zhang1996,Zhang1998}, and Bell introduced strong lower
transcendence degree for division algebras~\cite{Bell2012}. Our
invariant records commuting polynomial families relative to a fixed,
possibly noncentral, division subring. On the growth side, Lezama
and Latorre developed generalized Hilbert functions and
Gelfand--Kirillov dimension for semi-graded rings~\cite{Lezama2017}.
We use an explicit relative convention: $\GKdim_D(S)$ is the
supremum of
\[
 \limsup_{q\to\infty}
 \frac{\log\ldim{V^q}}{\log q},\qquad
 V^q=\Span_D\{v_1\cdots v_q:v_i\in V\},
\]
over finite-dimensional left $D$-subspaces $V\subseteq S$ containing
$1$. When $S$ satisfies finiteness conditions, we prove the following result.
\begin{theoremB}
Let $D\subseteq R\subseteq S$, where $R/D$ is automorphically
finitely generated and $S$ is finite as a left $R$-module. Assume, in addition, that $S$ is locally finite as a $D$-bimodule,
in the following left-dimensional sense:
\begin{equation}
 \ldim{DsD}<\infty\qquad\text{for every }s\in S.
\end{equation}
Here $DsD$ denotes the additive span of the elements $asb$, with
$a,b\in D$. Then every power of every left $D$-frame in $S$ is
finite-dimensional, and
\[
 \GKdim_D(S)=\GKdim_D(R)=\atrdeg_D(R)=\atrdeg_D(S).
\]
\end{theoremB}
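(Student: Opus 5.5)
Throughout, write $R=P/I$ as in \eqref{eq:intro-presentation_0}, with generators $\bx_1,\ldots,\bx_m$, and set $d=\atrdeg_D(R)$. The plan is to prove a chain of inequalities
\[
 d\le \atrdeg_D(S)\le \GKdim_D(S)\le \GKdim_D(R)\le d,
\]
together with the separate inequality $d\le\GKdim_D(R)$, which is immediate from Theorem A.

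\textbf{Step 1: frames are finite-dimensional.} Write $S=\sum_{j=1}^n Rs_j$ with $s_1=1$. For a left $D$-frame $V=\sum_k Dv_k$, note that $VV=\sum_{k,l} Dv_kDv_l\subseteq\sum_{k,l}D\,(Dv_lD)\cdot\ldots$; more cleanly, $V^q$ is spanned by products $v_{k_1}a_1v_{k_2}a_2\cdots$, and moving each $a_i$ left only requires that $v_k D\subseteq Dv_kD$ be finite-dimensional on the left. By local finiteness, $W_k:=Dv_kD$ is a finite-dimensional left space with $W_kD=W_k$. Induction on $q$ then gives $V^q\subseteq W_{k_1}\cdots W_{k_q}$, and $W\cdot W'$ is spanned by $w\,(Dw'D)$, which is finite-dimensional. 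So every power is finite-dimensional, and $\GKdim_D$ is well defined.

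\textbf{Step 2: $\GKdim_D(S)\le\GKdim_D(R)=d$.} Use the standard argument for finite module extensions. Given a frame $V\subseteq S$, write $v s_j=\sum_l r_{jl}(v)s_l$ and $s_js_l=\sum_t r'_{jlt}s_t$, with $r_{jl},r'_{jlt}\in R$. Also handle $D$-scalars: $s_ja=\sum_l \rho_{jl}(a)s_l$. This last point is the delicate one, since the coefficients depend on $a\in D$ and infinitely many elements must be controlled. To do so, enlarge the generating set to a finite left $D$-basis of $\sum_j Ds_jD$, which is finite by local finiteness, so that $\sum_j Rs_j$ has generators $s_j$ satisfying $Ds_jD\subseteq\sum_l Ds_l$; then $s_ja=\sum_l a_l s_l$ with $a_l\in D$. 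Choose a frame $U\subseteq R$, of the form $F_eR$ for some $e$, containing $D$ and all the coefficients; a frame of this shape is closed under multiplication by $D$ on the right. Then $V^q\subseteq\sum_j U^{q}s_j$, roughly $U^{q+c}\cdot\{s_j\}$. Since $U^q\subseteq F_{eq}R$, Theorem A gives
\[
 \ldim{V^q}\le n\,h_R(eq+c)=O(q^d),
\]
so $\GKdim_D(S)\le d$. For $R$ itself, any frame lies in some $F_eR$, so $\GKdim_D(R)=\rgdim_D(R)=d$, where the inequality $\ge$ comes from the frame $F_1R$.

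\textbf{Step 3: $d\le\atrdeg_D(S)\le\GKdim_D(S)$.} The first inequality holds because an automorphically independent tuple in $R$ stays independent in $S\supseteq R$. For the second, take an independent tuple $s_1',\ldots,s_r'$ in $S$ and the frame $V=D+\sum_i Ds_i'$. All monomials of degree $\le q$ lie in $V^q$, so $\ldim{V^q}\ge\binom{q+r}{r}$ and hence $r\le\GKdim_D(S)$, as in the lower bound of Theorem A. Combining Steps 2 and 3 closes the chain. I expect the main obstacle to be the bookkeeping with $D$-coefficients in Step 2, namely ensuring right multiplication by $D$ stays inside a controlled finite-dimensional space. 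Local finiteness is used precisely there, via $\ldim{Ds_jD}<\infty$ and $\ldim{Dv_kD}<\infty$.
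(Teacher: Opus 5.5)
Your overall route is the paper's. You bound $\GKdim_D(R)$ using the frames $F_eR$ and $F_1R$, and you enlarge the $R$-module generators of $S$ to a left $D$-basis of $\sum_j Ds_jD$, which is exactly the paper's space $U$. You then bound $\dim$ of $V^q$ by a multiple of $h_R$ at a linearly rescaled argument, and get the lower bound from the frame $D+\sum_i Ds_i'$ of an independent tuple.

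\textbf{The gap.} There is a concrete error in the bookkeeping of Step~2. The expansions you record, $vs_j=\sum_l r_{jl}(v)s_l$ and $s_js_l=\sum_t r'_{jlt}s_t$, do not control $V^q$. Take $w_1,w_2\in V$ and try to expand $w_1w_2$.
\begin{enumerate}
\item If you expand $w_2=w_2s_1=\sum_l r_{1l}(w_2)s_l$, you are left with $w_1r$ for some $r\in R$.
\item If you expand $w_1$ instead, you are left with $s_lw_2$. Expanding $w_2$ as well then leaves $s_lr$ with $r\in R$. That would need a table for $s_j\bx_i$, which you never record.
\end{enumerate}
The underlying reason is that left multiplication by $v$ is not left $R$-linear, so the finitely many values $vs_j$ say nothing about $vrs_j$.

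\textbf{The fix.} Use the other order: $s_jv_k=\sum_l r_{jkl}s_l$ for a left $D$-basis $(v_k)$ of $V$. The map $x\mapsto xv_k$ is an endomorphism of the left $R$-module $S$, so it is determined by these finitely many coefficients. Put $G_q=\sum_l (F_qR)s_l$.
\begin{enumerate}
\item Your enlargement of the generating set gives $G_qD\subseteq G_q$.
\item The corrected expansions give $G_qv_k\subseteq G_{q+c}$, hence $G_qV\subseteq G_{q+c}$.
\item By induction, $V^q\subseteq G_{cq}$.
\end{enumerate}
This is the rigorous form of your ``roughly $U^{q+c}\cdot\{s_j\}$'', and it is exactly the paper's argument. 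The $s_js_l$ table then becomes unnecessary.

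\textbf{Step~1 is a valid different route.} You obtain finite-dimensionality of powers directly, and the idea is sound. Set $W=\sum_k Dv_kD$, so $WD\subseteq W$ and $V^q\subseteq W^q$. If $W,W'$ are left finite-dimensional with left bases $(w_i),(w'_j)$ and $WD\subseteq W$, then $WW'\subseteq\sum_{i,j}Dw_iw'_j$, because $w_i\mu\in WD\subseteq W$ for $\mu\in D$. Since $W^{q-1}D\subseteq W^{q-1}$, induction shows every $W^q$ is left finite-dimensional.

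This uses only local finiteness, not module-finiteness over $R$, so it is slightly more general than the paper. The paper gets finite-dimensionality only as a byproduct of $V^q\subseteq G_{cq}$. Your write-up of it (``$V^q\subseteq W_{k_1}\cdots W_{k_q}$'', ``spanned by $w(Dw'D)$'') should be tightened to the statement above, with a sum over index tuples.
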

Thus the independence dimension agrees with relative
Gelfand--Kirillov growth under this convention.

The growth comparison yields invariance results beyond the class of
compatible Ore quotients.

\begin{theoremC}
Let $D\subseteq R\subseteq S$, where $R/D$ is automorphically
finitely generated. If $S$ is finite as a left $R$-module, or if every
element of $S$ satisfies a monic left polynomial relation over $R$,
then
\[
 \atrdeg_D(S)=\atrdeg_D(R).
\]
This common value is also the largest length of a commuting tuple
in $S$ with left $D$-linearly independent indexed monomials. No
automorphic finite-generation assumption on $S/D$ is required.
\end{theoremC}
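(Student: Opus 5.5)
Writing $d=\atrdeg_D(R)$, the plan is to prove two inequalities for the auxiliary quantity
\[
 c(S)=\sup\{r : \text{some commuting } r\text{-tuple in } S \text{ has left } D\text{-independent monomials}\}.
\]
Since every automorphically independent tuple is such a tuple, and $R\subseteq S$,
\[
 d=\atrdeg_D(R)\le \atrdeg_D(S)\le c(S).
\]
So it suffices to prove $c(S)\le d$. By Theorem A applied to $R$, $d$ is attained by a subtuple of coordinate images in $R$, which gives $d\le\atrdeg_D(R)$ trivially. The whole content is therefore the upper bound $c(S)\le d$.

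\emph{Finite module case.} Write $S=\sum_{j=1}^n Re_j$ with $e_1=1$. Fix a commuting tuple $s_1,\dots,s_r$ in $S$ with independent monomials, and write
\[
 s_i=\sum_j \rho_{ij}e_j,\qquad e_je_k=\sum_l \rho_{jkl}e_l,\qquad \rho_{ij},\rho_{jkl}\in R.
\]
Let $c$ be a filtration degree (in the presentation $R=P/I$) bounding all these $\rho$'s. The main obstacle is that $R$ need not be locally finite as a $D$-bimodule. Moving elements of $D$ past the $e_j$ could then produce uncontrolled coefficients, and we must avoid this.

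We do so by working only with left $D$-spans, never commuting scalars past $e_j$. The monomials $s^\alpha$ with $|\alpha|\le q$ are products of at most $q$ factors $s_i$. Expanding and repeatedly rewriting $e_je_k$ and $e_j\rho$ is the delicate step: $e_j\rho$ is not of the form $\rho' e_j$ in general. To get around this, I would first replace the generating set. Since $S$ is a finite left $R$-module, the left $R$-submodule $\sum_j Re_j$ is all of $S$. Hence every product $e_j x$ with $x\in F_1R$ (the coordinate images $\bx_k$) lies in $\sum_l Re_l$, say
\[
 e_j\bx_k=\sum_l \tau_{jkl}e_l .
\]
Elements $e_ja$ with $a\in D$ are the real problem. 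If $D$-bimodule finiteness fails, I would instead bound the growth using the left $R$-module structure only: every $s^\alpha$ lies in $\sum_l (F_{Cq}R)\,e_l$ for a constant $C$. This holds by induction on $|\alpha|$, using $s_i\cdot(\rho e_l)$. Here I again need to rewrite $e_j\rho$, which only needs the relations $e_j\bx_k$ and $e_j a$. Expect trouble exactly here; my fallback is to use that the tuple commutes. Then $S'=R[s_1,\dots,s_r]$ is a left $R$-submodule of the finite module $S$, and I would rather argue via a leading-term/Noetherian argument. Namely, $R$ is left Noetherian (as a quotient of $P$), so $S$ is a Noetherian left $R$-module. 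If $r>d$, then the left $D$-independence of the $s^\alpha$ together with the dimension count $\ldim{F_qR}=\Theta(q^d)$ from Theorem A gives a contradiction through a comparison of $\ldim{\sum_{|\alpha|\le q}Ds^\alpha}=\binom{q+r}{r}$ with $\ldim{\sum_l F_{Cq}R\,e_l}\le n\,h_R(Cq)=O(q^d)$. Thus $r\le d$.

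\emph{Integral case.} Reduce to the finite case. Each $s_i$ satisfies a monic left relation over $R$, so the subring generated by $R$ and $s_1,\dots,s_r$ should be a finite left $R$-module, spanned by monomials $s^\beta$ with $\beta_i<\deg$. Verifying this requires moving elements of $R$ past the $s_i$. For that I would restrict to the subring $R'=R\langle s_1,\dots,s_r\rangle$ and argue as above with the spanning set $\{s^\beta\}$, again showing $s^\alpha\in\sum_\beta F_{Cq}R\,s^\beta$ via the monic relations. The same dimension count then yields $r\le d$, completing $c(S)=d$.
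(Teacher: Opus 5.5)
\textbf{There is a genuine gap.} Your reduction to the bound $c(S)\le d$ is correct, and the final count you want (comparing $\binom{q+r}{r}$ with $n\,h_R(Cq)=O(q^d)$) is the right one. However, the inclusion that this count rests on,
\[
 \sum_{|\alpha|\le q}Ds^\alpha\subseteq\sum_l (F_{Cq}R)\,e_l ,
\]
is never established.

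\textbf{Finite module case.} You build monomials by multiplying on the left, $s_i\cdot(\rho e_l)$. This forces you to rewrite $e_j\rho$, and in particular $e_ja$ with $a\in D$. You correctly see that this breaks down without bimodule finiteness. Your fallback does not repair it. Left noetherianity of ${}_RS$ gives no degree control, and your closing dimension comparison simply presupposes the missing inclusion.

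The missing idea is to build monomials by multiplying on the \emph{right}. Since $S=\sum_l Re_l$, write $e_ls_i=\sum_k\tau_{lik}e_k$ with all $\tau_{lik}\in F_cR$, and put $G_q=\sum_l(F_qR)e_l$. Then
\[
 \Bigl(\sum_l f_le_l\Bigr)s_i=\sum_k\Bigl(\sum_l f_l\tau_{lik}\Bigr)e_k ,
\]
so $G_qs_i\subseteq G_{q+c}$. Start from $1=e_1\in G_0$ and peel off the last factor of the ordered product $s^\alpha$; this gives $s^\alpha\in G_{c|\alpha|}$. No element of $R$ or $D$ is ever moved past an $e_l$. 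You need neither the relations $e_je_k$ nor the expansions $s_i=\sum_j\rho_{ij}e_j$. Since $\ldim{G_{cq}}\le n\,h_R(cq)$, you get $r\le d$. This is exactly the paper's proof of Theorem~\ref{thm:finite}.

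\textbf{Integral case.} Your reduction to the finite case is unjustified. To show that $R[s_1,\dots,s_r]$ is a finite left $R$-module spanned by the $s^\beta$ with $\beta_i<d_i$, you would have to move elements of $R$ past the $s_i$, as you yourself note. It is also unnecessary. All you need is
\[
 s^\alpha\in\sum_{\beta_j<d_j}(F_{aq}R)\,s^\beta \qquad (|\alpha|\le q),
\]
and this follows directly with every coefficient kept on the far left:
\begin{enumerate}
\item If $\alpha_j\ge d_j$, use commutativity of the tuple to write $s^\alpha=s_j^{d_j}w$ with $w$ a monomial.
\item Substitute the monic relation $s_j^{d_j}=-\sum_{k<d_j}c_{j,k}s_j^k$. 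This gives $s^\alpha=-\sum_k c_{j,k}(s_j^kw)$, with each remaining monomial of smaller total degree.
\item Iterate. After at most $q$ reductions, every coefficient is a product of at most $q$ elements of $F_aR$.
\end{enumerate}
This yields $\binom{q+r}{r}\le(d_1\cdots d_r)\,h_R(aq)$, hence $r\le d$. Your last sentence gestures at this argument, but it is routed through the unproved finiteness claim and through ``argue as above'', which is the flawed step.

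Note also that the noetherian observation runs opposite to your reduction. It shows that the finite hypothesis implies the integral one (Remark~\ref{rem:finite-implies-integral}), not the converse.
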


In particular, every skew normalization of a fixed extension has
exactly $\atrdeg_D(S)$ variables. We also prove that, for a compatible
Ore quotient $S/D$, every automorphically independent tuple of
maximum size makes $S$ left algebraic over the subring it generates.
This conclusion is effective: if $n=\atrdeg_D(S)$ and
$s_1,\ldots,s_n,u\in F_cS$, then
\[
 \binom{q+n+1}{n+1}>h_S(cq)
\]
guarantees a nonzero relation $\sum_{j=0}^q b_j u^j=0$ with
$b_j\in D[s_1,\ldots,s_n]$ and $\deg_s b_j\leq q-j$.
Such a relation need not be monic. Indeed,
Example~\ref{ex:algebraic-not-stable} shows that elementwise left
algebraicity alone does not preserve automorphic transcendence degree.

Our final main result concerns the existence and structure of
normalizations. Specialize to zero derivations in the presentation,
so that
$P=D[x_1,\ldots,x_m;\sigma_1,\ldots,\sigma_m]$, and suppose that
the commuting coefficient automorphisms are independent modulo inner
automorphisms. More precisely, require the homomorphism
\begin{equation}\label{eq:Phi-intro}
 \Phi:\Z^m\longrightarrow\Out(D)=\Aut(D)/\Inn(D),\qquad
 \alpha\longmapsto
 [\sigma_1^{\alpha_1}\cdots\sigma_m^{\alpha_m}]
\end{equation}
to be injective. We call this condition \emph{outer independence}.

\begin{theoremD}
Assume outer independence. Every two-sided ideal of $P$ is monomial.
Let $S=P/I$ be a proper quotient and set
\[
 E=\{i:\bx_i\text{ is not nilpotent in }S\}.
\]
Every automorphic element of $S\setminus D$ has the form
$b+c\overline{x^\alpha}$, where $b\in D$, $c\in D^\times$, and
$x^\alpha$ is a nonconstant standard monomial. Furthermore,
\[
 S\text{ admits a skew normalization over }D
 \quad\Longleftrightarrow\quad
 \atrdeg_D(S)=|E|.
\]
When these conditions hold, the normalization subrings are precisely
\[
 D[\bx_i^{a_i}:i\in E],\qquad a_i\geq1.
\]
If $\bx_i^{N_i}=0$ for $i\notin E$, then $S$ has a left generating
set over this subring of size at most
\[
 \Bigl(\prod_{i\in E}a_i\Bigr)
 \Bigl(\prod_{i\notin E}N_i\Bigr).
\]
\end{theoremD}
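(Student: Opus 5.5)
The plan has five steps: monomiality of ideals, the form of automorphic elements, the forward direction of the equivalence, the converse with the generator bound, and the classification of normalization subrings.

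\textbf{Step 1: ideals are monomial.} We first show that every two-sided ideal $I$ of $P$ is spanned over $D$ by the monomials it contains. Take $f=\sum_\alpha c_\alpha x^\alpha\in I$ with minimal support among the elements of $I$ not in the span of the monomials of $I$. Normalize so that some coefficient $c_\beta=1$. For $a\in D$ consider
\[
 f\,a-\sigma^{\beta}(a)f=\sum_\alpha\bigl(c_\alpha\sigma^\alpha(a)-\sigma^\beta(a)c_\alpha\bigr)x^\alpha\in I,
\]
whose support is strictly smaller, since the $\beta$-term cancels. By minimality, each coefficient of this element lies in the span of the monomials of $I$. If some $x^\alpha$ with $\alpha\ne\beta$ and $c_\alpha\ne0$ is not in $I$, we get $c_\alpha\sigma^\alpha(a)=\sigma^\beta(a)c_\alpha$ for all $a$. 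Then $\sigma^{\alpha-\beta}$ is inner, given by conjugation by $c_\alpha$ after the appropriate twist. This contradicts injectivity of $\Phi$. Hence every monomial in the support of $f$ lies in $I$ (each $x^\alpha$ with $c_\alpha\ne0$ belongs to $I$, because $D$ acts invertibly), which is the contradiction we need. So $S$ has as $D$-basis the standard monomials $\overline{x^\alpha}$ with $x^\alpha\notin I$, and $\Delta$ is an order ideal complementary to a monomial ideal.

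\textbf{Step 2: automorphic elements.} Let $s=\sum_{\alpha\in\Delta}c_\alpha\overline{x^\alpha}\in S\setminus D$ satisfy $sa=\sigma(a)s+\delta(a)$. Comparing coefficients of $\overline{x^\alpha}$ for $\alpha\ne0$ gives $c_\alpha\sigma^\alpha(a)=\sigma(a)c_\alpha$ for all $a$, so $[\sigma]=[\sigma^\alpha]$ in $\Out(D)$. By outer independence, at most one nonzero $\alpha$ has $c_\alpha\ne0$. Hence $s=b+c\overline{x^\alpha}$.

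\textbf{Step 3: ($\Rightarrow$).} Suppose $R=D[s_1,\dots,s_n]$ is a skew normalization. Theorem~C gives $n=\atrdeg_D(S)$. By Step~2, $s_j=b_j+c_j\overline{x^{\alpha_j}}$. Commutativity and independence force the $\alpha_j$ to be pairwise distinct and all nonzero powers of $\overline{x^{\alpha_j}}$ to be nonzero. For $i\in E$, finiteness of $S$ over $R$ means the powers $\overline{x_i}^k$ must lie in a finitely generated left $R$-module. Since the $D$-basis consists of monomials, the $R$-module generated by finitely many monomials is spanned by monomials of the form $x^{\gamma+\sum k_j\alpha_j}$ (after expanding the $b_j$, the leading behaviour is controlled by the $\alpha_j$). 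Hence the ray $\N e_i\cap\Delta$, which is infinite, must be covered by finitely many translates of the monoid generated by the $\alpha_j$. This forces some $\alpha_j$ to be a positive multiple of $e_i$. Thus $|E|\le n=\atrdeg_D(S)$. The reverse inequality holds because $\N^J\subseteq\Delta$ implies $J\subseteq E$, so Theorem~A gives $\atrdeg_D(S)\le|E|$. Hence $\atrdeg_D(S)=|E|$, each $\alpha_j=a_je_{i_j}$, and, absorbing $b_j$ and $c_j$, the subring is $D[\bx_i^{a_i}:i\in E]$.

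\textbf{Step 4: ($\Leftarrow$) and the bound.} Suppose $\atrdeg_D(S)=|E|$. By Theorem~A, $\N^E\subseteq\Delta$, and each $\bx_i$ with $i\notin E$ satisfies $\bx_i^{N_i}=0$. Since $\Delta$ is a monomial order ideal, the monomials $\bx_i^{a_i}$ for $i\in E$ generate an automorphically independent tuple. The standard monomials $x^\gamma$ with $\gamma_i<a_i$ for $i\in E$ and $\gamma_i<N_i$ for $i\notin E$ generate $S$ as a left module over this subring, which gives the count $\prod_{i\in E}a_i\prod_{i\notin E}N_i$.

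\textbf{Step 5: classification.} Step~3 shows every normalization has the stated form, and Step~4 shows every subring of that form is a normalization. Together these give the classification.

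The main obstacle is the covering argument in Step~3, namely controlling the lower-order constants $b_j$. I would handle it by passing to the associated graded object, equivalently by leading-monomial comparison: these constants do not affect the leading exponents, so it suffices to treat the case $b_j=0$.
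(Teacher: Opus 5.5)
Your proposal is correct and follows essentially the paper's route. The steps match: coefficient separation via $fa-\tau(a)f$ gives monomial ideals and the form $b+c\overline{x^\alpha}$ of automorphic elements; the support covering $\Delta\subseteq U+H$ forces a pure axis exponent for each $i\in E$; then come the chain $|E|\le n\le\atrdeg_D(S)\le|E|$ and the explicit monomial generating set.

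The constants $b_j$ that you flag as the main obstacle are harmless and need no associated-graded or leading-monomial argument. Since $\overline{x^{\alpha_j}}=c_j^{-1}(s_j-b_j)$, one has $D[s_1,\dots,s_n]=D[\overline{x^{\alpha_1}},\dots,\overline{x^{\alpha_n}}]$ exactly, which is how the paper handles them. Hence every element of $\sum_\ell Rv_\ell$ has its whole support, not just its leading exponent, in $U+H$. This uses generators replaced by the monomials in their supports, which is allowed because $D\subseteq R$.
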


Outer independence separates monomial components and restricts
automorphic elements to the form stated in Theorem~D. Finite module generation then forces a normalization tuple
to account for every nonnilpotent coordinate axis. Theorem~D gives a criterion
for every proper quotient under outer independence and determines
all normalization subrings. 

The paper is organized as follows. Section~\ref{sec:prelim} develops
automorphic independence, coefficient compatibility, and left--right
symmetry. Section~\ref{sec:growth} establishes the growth comparison
and the Hilbert polynomial and series formulas.
Section~\ref{sec:extensions} proves effective algebraicity, extension
invariance, and the relative Gelfand--Kirillov comparisons.
Section~\ref{sec:outer} proves the normalization criterion and
classification under outer independence. Finally,
Section~\ref{sec:examples} gives the computation from leading supports
and examples illustrating the distinction between dimension and
normalization.

\section{Compatible Ore extensions and automorphic independence}
\label{sec:prelim}

Throughout this paper $D$ is a division ring of characteristic 0, all rings are associative with identity, and all inclusions and
homomorphisms preserve identity. We write
$\N=\{0,1,2,\ldots\}$ and use $\N^0=\{()\}$.  
 Unless otherwise stated,
dimensions over $D$ are left dimensions. The notation $D[T]$ denotes
the subring generated by $D$ and a subset $T$; it imposes no unstated
commutation relations with $D$. The material for this section can be found in \cite{GoodearlWarfield2004, HoangToan2026,  ParanVo2025}.

\begin{definition}[Compatibility of the coefficient maps]
     For each automorphism $\sigma\in\Aut(D)$, a $\sigma$-derivation of $D$ is an additive map
$\delta:D\to D$ satisfying
\[
 \delta(ab)=\sigma(a)\delta(b)+\delta(a)b\qquad(a,b\in D).
\]
In particular, $\delta(1)=0$. A family
$((\sigma_i,\delta_i))_{i=1}^m$ is called \emph{compatible} if
\begin{equation}\label{eq:compatibility}
 \begin{aligned}
 \sigma_i\sigma_j&=\sigma_j\sigma_i,\\
 \sigma_i\delta_j&=\delta_j\sigma_i,\\
 \delta_i\delta_j&=\delta_j\delta_i
 \end{aligned}
 \qquad\text{for all distinct }i,j.
\end{equation}
The mixed identity is imposed for every ordered pair of distinct
indices; no condition
$\sigma_i\delta_i=\delta_i\sigma_i$ is imposed.
\end{definition}
.
\begin{definition} \label{def:automorphic}
Let $D\subseteq S$.
    An element $s\in S$ is
\emph{automorphic over $D$} if
\begin{equation}\label{eq:intro-action}
 sd=\sigma(d)s+\delta(d)\qquad(d\in D).
\end{equation}
for some $\sigma\in\Aut(D)$ and some $\sigma$-derivation $\delta$. We call $(\sigma,\delta)$ an associated pair of $s$.
\end{definition}




\begin{lemma}[Uniqueness and compatibility]\label{lem:compatibility}
Let $D\subseteq S$ be a ring extension and $s,t$ be elements of $S$. 
\begin{enumerate}
\item If $s\notin D$ is automorphic, its associated pair is unique.
\item Suppose $s,t$ commute and have associated pairs
$(\sigma,\delta)$ and $(\tau,\varepsilon)$. If $1,s,t,st$ are left
$D$-linearly independent, then
\[
 \sigma\tau=\tau\sigma,\quad
 \sigma\varepsilon=\varepsilon\sigma,\quad
 \delta\tau=\tau\delta,\quad
 \delta\varepsilon=\varepsilon\delta.
\]
\end{enumerate}
\end{lemma}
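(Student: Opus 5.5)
For part (i), I will argue directly from linear independence of $1$ and $s$. Suppose $s\notin D$ admits two pairs $(\sigma,\delta)$ and $(\sigma',\delta')$. Subtracting the two instances of \eqref{eq:intro-action} gives
\[
 (\sigma(d)-\sigma'(d))s=\delta'(d)-\delta(d)\qquad(d\in D).
\]
If $\sigma(d)\ne\sigma'(d)$ for some $d$, we may left-multiply by $(\sigma(d)-\sigma'(d))^{-1}$, which is allowed since $D$ is a division ring. This gives $s\in D$, a contradiction. Hence $\sigma=\sigma'$, and then $\delta=\delta'$ follows from the same display. This part is routine.

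For part (ii), I will compute $std$ in two ways, using $st=ts$, and then compare coefficients with respect to the left $D$-independent family $1,s,t,st$. First, applying the relation for $t$ and then the one for $s$:
\[
 s(td)=s(\tau(d)t+\varepsilon(d))
 =\sigma\tau(d)\,st+\delta\tau(d)\,t+\sigma\varepsilon(d)\,s+\delta\varepsilon(d).
\]
Second, using $ts=st$ and expanding $t(sd)$ in the opposite order:
\[
 t(sd)=\tau\sigma(d)\,ts+\varepsilon\sigma(d)\,s+\tau\delta(d)\,t+\varepsilon\delta(d),
\]
where $ts=st$. The two expressions are equal because $std=tsd$. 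By left $D$-independence of $1,s,t,st$, the coefficients of $st$, $t$, $s$ and $1$ must agree. These give, respectively,
\[
 \sigma\tau=\tau\sigma,\qquad \delta\tau=\tau\delta,\qquad \sigma\varepsilon=\varepsilon\sigma,\qquad \delta\varepsilon=\varepsilon\delta.
\]

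The only care needed is bookkeeping. Each coefficient must appear on the left of the monomial, which the defining relation guarantees, and the $t$- and $s$-coefficients must be matched correctly between the two expansions. No step is a genuine obstacle. The hypothesis that $1,s,t,st$ are independent is exactly what lets us read off the four identities, and it also implies $s,t\notin D$, so by (i) the pairs are unique and the conclusions are unambiguous.
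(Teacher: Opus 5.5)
Your proposal is correct and follows the paper's proof essentially verbatim. For part (i), you subtract the two relations and invert a nonzero coefficient; for part (ii), you expand $s(td)=t(sd)$ and compare the coefficients of $st,t,s,1$. Your explicit expansions, and the observation that independence of $1,s,t,st$ forces $s,t\notin D$, fill in bookkeeping the paper leaves implicit.
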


\begin{proof}
If $s$ has two associated pairs, subtraction gives
\[
 (\sigma(d)-\tau(d))s=\varepsilon(d)-\delta(d)\in D.
\]
A nonzero coefficient on the left is invertible and would imply
$s\in D$. Thus $\sigma=\tau$ and then $\delta=\varepsilon$.
For the second assertion, expand $s(td)=t(sd)$ and compare the coefficients of
$st,s,t,1$.
\end{proof}

\begin{remark}\label{rem:uniqueness-warning}
The condition $s\notin D$ cannot be replaced by $s\ne0$. For any
$c\in D$ and $\sigma\in\Aut(D)$, the map
$\delta(d)=cd-\sigma(d)c$ is a $\sigma$-derivation and makes $c$
automorphic with associated pair $(\sigma,\delta)$.
\end{remark}

\begin{lemma}[Triangular coefficient movement]\label{lem:triangular}
Suppose $s_1,\ldots,s_r$ commute and each has an associated pair
$(\sigma_i,\delta_i)$. Then:
\begin{itemize}
    \item [(i)] For $\alpha\in\N^r$, set
$\sigma^\alpha=\sigma_1^{\alpha_1}\cdots\sigma_r^{\alpha_r}$ in the
indicated order. For every $d\in D$ there are coefficients in $D$ such
that
\begin{align}
 s^\alpha d
 &=\sigma^\alpha(d)s^\alpha
   +\sum_{\substack{\beta\le\alpha\\|\beta|<|\alpha|}}
          c_{\alpha,\beta}(d)s^\beta,
 \label{eq:move-left}\\
 d s^\alpha
 &=s^\alpha(\sigma^\alpha)^{-1}(d)
   +\sum_{\substack{\beta\le\alpha\\|\beta|<|\alpha|}}
          s^\beta e_{\alpha,\beta}(d).
 \label{eq:move-right}
\end{align}
Consequently, such a tuple is left independent if and only if it is
right independent.
\item[(ii)] If $s_1,\dots,s_r$ are left algebraically independent over $D$, then the family of associated pairs $(\sigma_1,\delta_1),\dots,(\sigma_r,\delta_r)$ satisfies the compatible condition (\ref{eq:compatibility}).
\end{itemize}

\end{lemma}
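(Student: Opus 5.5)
Part (i) will be proved by induction on $|\alpha|$. The base case is the defining relation $s_i d=\sigma_i(d)s_i+\delta_i(d)$, which is \eqref{eq:move-left} for $\alpha=e_i$.

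For the inductive step, write $s^\alpha=s_1^{\alpha_1}\cdots s_r^{\alpha_r}$ and peel off the rightmost factor: $s^\alpha=s^{\alpha-e_j}s_j$, where $j$ is the largest index with $\alpha_j>0$. Then
\[
 s^\alpha d=s^{\alpha-e_j}\bigl(\sigma_j(d)s_j+\delta_j(d)\bigr).
\]
Applying the induction hypothesis to $s^{\alpha-e_j}\sigma_j(d)$ and to $s^{\alpha-e_j}\delta_j(d)$ gives a leading term $\sigma^{\alpha-e_j}\sigma_j(d)s^{\alpha-e_j}s_j$, plus terms $c\,s^\beta s_j$ with $\beta\le\alpha-e_j$ and $|\beta|<|\alpha|-1$, plus terms $c\,s^\beta$ with $\beta\le\alpha-e_j$. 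Since the $s_i$ commute, $s^\beta s_j=s^{\beta+e_j}$ with $\beta+e_j\le\alpha$. Because $\sigma^\alpha$ is defined in the indicated order and $j$ is the last active index, $\sigma^{\alpha-e_j}\sigma_j=\sigma^\alpha$, so the leading term is exactly $\sigma^\alpha(d)s^\alpha$. All other exponents are $\le\alpha$ with strictly smaller total degree. Commutativity of the $\sigma_i$ is not needed here because the ordering is respected.

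Formula \eqref{eq:move-right} then follows from \eqref{eq:move-left}. Substitute $d'=(\sigma^\alpha)^{-1}(d)$ to get
\[
 d s^\alpha=s^\alpha d'-\sum_{\beta} c_{\alpha,\beta}(d')s^\beta,
\]
and rewrite each lower term $c\,s^\beta$ by a second induction on $|\beta|$, applying the right-hand formula for $\beta$. This produces only monomials $s^\gamma$ with $\gamma\le\beta\le\alpha$.

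For left $\Leftrightarrow$ right independence, filter by total degree and order monomials by a degree-compatible order. Suppose a nontrivial left relation $\sum a_\alpha s^\alpha=0$ holds. By \eqref{eq:move-right}, each $a_\alpha s^\alpha$ equals $s^\alpha(\sigma^\alpha)^{-1}(a_\alpha)$ plus lower terms. A triangular rewriting, processing exponents of maximal total degree first, shows the span of $\{s^\beta:|\beta|\le n\}$ on the left equals that on the right. More precisely, the transition between left-monomial and right-monomial expressions is unitriangular up to the bijections $(\sigma^\alpha)^{\pm1}$. A nontrivial left relation therefore yields a nontrivial right relation: its top-degree coefficient $(\sigma^\alpha)^{-1}(a_\alpha)$, taken at a $\le$-maximal $\alpha$ in the support, is nonzero. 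The argument is symmetric using \eqref{eq:move-left}. The main care is choosing $\alpha$ maximal so that no lower term contributes to $s^\alpha$; since lower terms have $\beta\le\alpha$ with $|\beta|<|\alpha|$, any maximal-degree element of the support works.

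Part (ii) reduces to Lemma~\ref{lem:compatibility}(ii). For distinct $i,j$, left independence of the tuple gives that $1,s_i,s_j,s_is_j$ are left $D$-independent, as these are distinct indexed monomials. The $s_i$ commute and are automorphic, so that lemma yields $\sigma_i\sigma_j=\sigma_j\sigma_i$, $\sigma_i\delta_j=\delta_j\sigma_i$, $\delta_i\sigma_j=\sigma_j\delta_i$, and $\delta_i\delta_j=\delta_j\delta_i$. These are exactly \eqref{eq:compatibility}, with the mixed identity holding for both ordered pairs.

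The only potential obstacle is the bookkeeping in the induction for \eqref{eq:move-left}, namely keeping the product order of $\sigma^\alpha$ consistent. I expect peeling the last factor to handle it cleanly.
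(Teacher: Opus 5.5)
Your proof is correct and follows essentially the paper's route. The movement formulas come from induction through the ordered product, left/right symmetry from comparing the top-degree coefficient, and (ii) from applying Lemma~\ref{lem:compatibility}(ii) to the independent quadruple $1,s_i,s_j,s_is_j$. The one variation is in \eqref{eq:move-right}: you obtain it by inverting \eqref{eq:move-left} at $d'=(\sigma^\alpha)^{-1}(d)$ and rewriting the lower terms by induction on degree, whereas the paper pushes $d$ rightward with the reverse rule $ds_i=s_i\sigma_i^{-1}(d)-\delta_i(\sigma_i^{-1}(d))$; both work equally well.
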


\begin{proof}
The degree-one rules are the defining relation and
\begin{equation}\label{eq:reverse-rule}
 d s_i=s_i\sigma_i^{-1}(d)
        -\delta_i\bigl(\sigma_i^{-1}(d)\bigr).
\end{equation}
Repeatedly apply these rules through the fixed ordered product
$s_1^{\alpha_1}\cdots s_r^{\alpha_r}$. At each step the automorphism
term preserves the exponent of the variable being crossed, while
the derivation term lowers it by one. This proves
\eqref{eq:move-left} and \eqref{eq:move-right} by induction. The inverse
in \eqref{eq:move-right} has the reverse composition order, as required.

Suppose a nontrivial right relation $\sum_\alpha s^\alpha d_\alpha=0$
is given. Convert it to left coefficients using
\eqref{eq:move-left}. Among indices with nonzero coefficient, choose
one of maximum total degree. Its new coefficient is
$\sigma^\alpha(d_\alpha)\ne0$; corrections from all terms have strictly
lower degree, so the converted relation is nontrivial. A left
independent tuple therefore has no right relation. The converse uses
\eqref{eq:move-right} in exactly the same way. 
Since $s_1,\dots,s_r$ are left algebraically independent over $D$, it follows from Lemma \ref{lem:compatibility} (ii) that $(\sigma_1,\delta_1),\dots,(\sigma_r,\delta_r)$ satisfy the compatible condition.
\end{proof}

\begin{definition}[Automorphic transcendence degree]\label{def:independence}
An element $s\in S$ is \emph{automorphic over $D$ with associated pair}
$(\sigma,\delta)$ if it satisfies \eqref{eq:intro-action}. A tuple
$s_1,\ldots,s_r$ is \emph{automorphically left algebraically independent over $D$} if
its members commute, each is automorphic over $D$, and the indexed
family
\[
 (s^\alpha)_{\alpha\in\N^r},\qquad
 s^\alpha=s_1^{\alpha_1}\cdots s_r^{\alpha_r},
\]
is left $D$-linearly independent.  We often omit ``algebraically'' from
``automorphically algebraically independent'' for brevity.

Right independence is defined by
putting the coefficients on the right.
 Define the \emph{left automorphic transcendence degree} of $S$ over $D$ as
\[
 \atrdeg_D^l(S)=\sup\{n\in\N:
   S\text{ contains an automorphically left independent }n\text{-tuple}\},
\]
and define $\atrdeg_D^r(S)$ analogously. Both suprema take values in
$\N\cup\{\infty\}$. The empty tuple is allowed, with its single
monomial $1$.
\end{definition}

By Lemma \ref{lem:triangular}, we henceforth write $\atrdeg_D(S)$ for the common value. The invariant
is monotone under inclusions fixing $D$ and is preserved by isomorphisms
fixing $D$.
maximal under inclusion.

\begin{definition}\label{def:commuting-degree}
For an arbitrary extension $S/D$, define $\dL(S)$ to be the supremum of the
lengths of commuting tuples whose indexed monomials are left
$D$-linearly independent:
$$\dL(S):=\sup\{n\in\N:
   S\text{ contains a commuting left independent }n\text{-tuple}\}$$. 
Define $\dR(S)$ using right independence. Then
\[
 \atrdeg_D(S)\le\dL(S),\qquad
 \atrdeg_D(S)\le\dR(S).
\]
No equality between these three invariants is assumed for an arbitrary
extension.
\end{definition}

{
\begin{proposition}[Construction of the polynomial ring]
\label{prop:construction}
For a compatible  family $\big\{(\sigma_i,\delta_i)_{i=1,\dots,m}\big\}$, there is a ring
\[
 P=D[x_1,\ldots,x_m;(\sigma_1,\delta_1),\ldots,(\sigma_m,\delta_m)]
\]
with left $D$-basis $(x^\alpha)_{\alpha\in\N^m}$ and relations
\begin{equation}\label{eq:relations}
 x_ix_j=x_jx_i,\qquad
 x_i d=\sigma_i(d)x_i+\delta_i(d)\quad(d\in D).
\end{equation}
Conversely, if
\eqref{eq:relations} holds in a ring with this monomial basis, the
coefficient family $\big\{(\sigma_i,\delta_i)_{i=1,\dots,m}\big\}$ satisfies \eqref{eq:compatibility}.
\end{proposition}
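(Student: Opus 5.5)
For the existence statement I would build $P$ as a ring of additive endomorphisms of the free left $D$-module $M=\bigoplus_{\alpha\in\N^m}Dx^\alpha$, rather than by a presentation. The construction is iterated. First form $D[x_1;\sigma_1,\delta_1]$ by the standard Ore construction \cite{GoodearlWarfield2004}. Next, for each $j\ge2$, extend $(\sigma_j,\delta_j)$ from $D$ to the previous ring $P_{j-1}=D[x_1,\ldots,x_{j-1};\ldots]$ by
\[
\tilde\sigma_j\Bigl(\sum d_\alpha x^\alpha\Bigr)=\sum\sigma_j(d_\alpha)x^\alpha,\qquad
\tilde\delta_j\Bigl(\sum d_\alpha x^\alpha\Bigr)=\sum\delta_j(d_\alpha)x^\alpha .
\]
Then I form $P_j=P_{j-1}[x_j;\tilde\sigma_j,\tilde\delta_j]$. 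By the Ore extension theorem, each $P_j$ is a free left $P_{j-1}$-module on the powers of $x_j$. By induction, $P_m$ is free as a left $D$-module on the ordered monomials $x^\alpha$. It also satisfies $x_jx_i=\tilde\sigma_j(x_i)x_j+\tilde\delta_j(x_i)=x_ix_j$ and $x_jd=\sigma_j(d)x_j+\delta_j(d)$.

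The key step, and the main obstacle, is checking that $\tilde\sigma_j$ is a ring automorphism of $P_{j-1}$ and that $\tilde\delta_j$ is a $\tilde\sigma_j$-derivation of $P_{j-1}$. Additivity and bijectivity are clear, the latter because $\sigma_j^{-1}$ acts coefficientwise. Multiplicativity need only be checked on the generating relations of $P_{j-1}$. Since $P_{j-1}$ is a free $D$-module, it suffices to see that applying $\tilde\sigma_j$ coefficientwise respects the rule $x_id=\sigma_i(d)x_i+\delta_i(d)$ for $i<j$.

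For $\tilde\sigma_j$ this check is
\[
\tilde\sigma_j(x_id)=\tilde\sigma_j\bigl(\sigma_i(d)x_i+\delta_i(d)\bigr)=\sigma_j\sigma_i(d)x_i+\sigma_j\delta_i(d),
\]
while
\[
x_i\,\sigma_j(d)=\sigma_i\sigma_j(d)x_i+\delta_i\sigma_j(d).
\]
These agree exactly because $\sigma_i\sigma_j=\sigma_j\sigma_i$ and $\sigma_j\delta_i=\delta_i\sigma_j$. The derivation identity for $\tilde\delta_j$ on $x_i\cdot d$ similarly requires $\delta_j\sigma_i=\sigma_i\delta_j$ and $\delta_j\delta_i=\delta_i\delta_j$. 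This is where the mixed identity must hold for both orderings of each pair $i\ne j$.

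To make the verification rigorous rather than relation-chasing, I would first show that $\tilde\sigma_j$ and $\tilde\delta_j$ satisfy the twisted Leibniz rules on products $a\,x^\alpha\cdot b\,x^\beta$. This goes by induction on $|\alpha|$, using the triangular movement of Lemma~\ref{lem:triangular} to express the product. I would then extend by additivity.

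For the converse, suppose a ring with left $D$-basis $(x^\alpha)$ satisfies \eqref{eq:relations}. Then for $i\ne j$, the elements $1,x_i,x_j,x_ix_j$ are left $D$-independent, the $x_i$ commute, and each $x_i$ is automorphic with pair $(\sigma_i,\delta_i)$. Lemma~\ref{lem:compatibility}(ii) then gives all four identities, which include \eqref{eq:compatibility} for both orders. Alternatively, one can cite Lemma~\ref{lem:triangular}(ii) directly.
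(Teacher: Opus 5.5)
Your proposal is correct and follows essentially the same route as the paper: iterated one-variable Ore extensions in which $\sigma_j$ and $\delta_j$ are extended to the earlier ring by fixing, respectively annihilating, the earlier variables. The compatibility identities, with the mixed one in both orders, are exactly what make these extensions an automorphism and a $\tilde\sigma_j$-derivation, and the converse comes from Lemma~\ref{lem:compatibility}(ii) applied to $1,x_i,x_j,x_ix_j$, as in the paper via Lemma~\ref{lem:triangular}(ii).

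Your planned induction on products $a\,x^\alpha\cdot b\,x^\beta$ is just a more explicit form of the paper's check that the extended maps respect the defining relations.
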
}

\begin{proof}
Proceed by induction on $m$. On the ring generated by
$D,x_1,\ldots,x_{i-1}$, extend $\sigma_i$ by fixing each earlier variable,
and extend $\delta_i$ by annihilating each earlier variable. These maps
respect the coefficient relations: for $j<i$, the identities needed
for the extended automorphism are
$\sigma_i\sigma_j=\sigma_j\sigma_i$ and
$\sigma_i\delta_j=\delta_j\sigma_i$. For the extended derivation, applying
it to $x_jd=\sigma_j(d)x_j+\delta_j(d)$ gives the requirement
\[
 x_j\delta_i(d)
 =\delta_i\sigma_j(d)x_j+\delta_i\delta_j(d),
\]
which follows from the remaining cross-identities. The relations
between earlier variables are preserved because the extended
automorphism fixes them and the extended derivation annihilates them.
Adjoining $x_i$ by the ordinary one-variable Ore
construction therefore gives the desired ring. Iterating its free
left-module description gives the stated basis. 
The converse follows from Lemma \ref{lem:triangular} (ii).
\end{proof}

\begin{definition}\label{def:afg}
An extension $S/D$ is a \emph{compatible Ore quotient}, or is
\emph{automorphically finitely generated}, if it has a presentation
\begin{equation}\label{eq:intro-presentation}
 S=P/I,\qquad
 P=D[x_1,\ldots,x_m;(\sigma_1,\delta_1),\ldots,(\sigma_m,\delta_m)],
\end{equation}
where $I$ is a proper two-sided ideal, $\sigma_i\in\Aut(D)$, each $\delta_i$ is a $\sigma_i$-derivation, the $x_i$ commute with each other, and the
coefficient family $(\sigma_i,\delta_i)$ satisfies the compatible condition (\ref{eq:compatibility}), and 
$$x_i d=\sigma_i(d)x_i+\delta_i(d),\quad \forall d\in D.$$
\end{definition}

\begin{lemma}\cite[Prop 2.9]{HoangToan2026}\label{lem:subring}
Let $S/D$ be a ring extension. Let $s_1,\ldots,s_r\in S$ be automorphically independent with associated
pairs $(\sigma_i,\delta_i)$. Substitution induces an isomorphism fixing
$D$,
\[
 D[T_1,\ldots,T_r;(\sigma_1,\delta_1),\ldots,(\sigma_r,\delta_r)]
 \xrightarrow{\ \sim\ }D[s_1,\ldots,s_r],
 \qquad T_i\longmapsto s_i.
\]
The monomials $s^\alpha$ form both a left and a right $D$-basis of the
subring. The filtrations obtained from the two coefficient conventions
coincide.
\end{lemma}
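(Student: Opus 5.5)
The plan is to build the map directly from the universal property of the Ore construction, one variable at a time, and then use independence to get injectivity.

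Let $Q=D[T_1,\ldots,T_r;(\sigma_1,\delta_1),\ldots,(\sigma_r,\delta_r)]$. First we check that $Q$ exists. Since $s_1,\ldots,s_r$ commute, are automorphic, and have left independent monomials, Lemma~\ref{lem:triangular}(ii) (via Lemma~\ref{lem:compatibility}(ii)) shows that the family $(\sigma_i,\delta_i)$ is compatible. Proposition~\ref{prop:construction} then gives $Q$, with left $D$-basis $(T^\alpha)$ and relations \eqref{eq:relations}. When some $s_i\in D$ the associated pair is not unique (Remark~\ref{rem:uniqueness-warning}). However, independence forces $s_i\notin D$ whenever $r\ge1$, because $1,s_i$ must be independent, so Lemma~\ref{lem:compatibility}(i) makes the pairs well defined.

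Next we construct a ring homomorphism $\phi:Q\to S$ fixing $D$ with $T_i\mapsto s_i$. We do this by induction along the iterated Ore description used in Proposition~\ref{prop:construction}. Suppose $\phi_{i-1}:Q_{i-1}=D[T_1,\ldots,T_{i-1};\ldots]\to S$ is already defined. The universal property of the Ore extension $Q_i=Q_{i-1}[T_i;\tilde\sigma_i,\tilde\delta_i]$ lets us extend $\phi_{i-1}$ by $T_i\mapsto s_i$, provided that
\[
 s_i\,\phi_{i-1}(q)=\phi_{i-1}(\tilde\sigma_i(q))\,s_i+\phi_{i-1}(\tilde\delta_i(q))\qquad(q\in Q_{i-1}).
\]
The set of $q$ satisfying this identity is closed under addition. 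It is also closed under products, by the twisted Leibniz rule for the pair $(\tilde\sigma_i,\tilde\delta_i)$. It therefore suffices to verify the identity on generators. For $q=d\in D$ it is exactly the automorphic relation of $s_i$. For $q=T_j$ with $j<i$ we have $\tilde\sigma_i(T_j)=T_j$ and $\tilde\delta_i(T_j)=0$, so the identity reduces to $s_is_j=s_js_i$.

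Since $\phi(T^\alpha)=s^\alpha$, $\phi$ is onto $D[s_1,\ldots,s_r]$. Indeed, its image is a subring containing $D$ and every $s_i$, and it is contained in $D[s_1,\ldots,s_r]$. It is injective because $\phi(\sum d_\alpha T^\alpha)=\sum d_\alpha s^\alpha$ and the $s^\alpha$ are left independent. The $s^\alpha$ thus form a left basis of the subring. They also form a right basis: right independence follows from Lemma~\ref{lem:triangular}(i), and spanning on the right follows from \eqref{eq:move-left}. Concretely, \eqref{eq:move-right} rewrites each $d\,s^\alpha$ as $s^\alpha(\sigma^\alpha)^{-1}(d)$ plus terms $s^\beta e$ with $|\beta|<|\alpha|$, and induction on $|\alpha|$ then expresses every left combination as a right combination. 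The same triangular formulas show that the span of $\{s^\alpha:|\alpha|\le q\}$ is identical whether coefficients are taken on the left or on the right. This gives the coincidence of the two filtrations.

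The main obstacle is the multiplicativity check in the universal-property step, which is where the compatibility identities from Lemma~\ref{lem:triangular}(ii) are actually used: they guarantee that $(\tilde\sigma_i,\tilde\delta_i)$ is a genuine automorphism/skew-derivation pair on $Q_{i-1}$. I would take that pair directly from the proof of Proposition~\ref{prop:construction} rather than re-derive it.
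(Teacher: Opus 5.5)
Your proposal is correct and follows the paper's proof essentially step for step. Compatibility comes from Lemma~\ref{lem:compatibility}, the substitution homomorphism from Proposition~\ref{prop:construction}, injectivity from left independence, and the right basis and equality of filtrations from coefficient movement via Lemma~\ref{lem:triangular}; you simply spell out the universal-property check that the paper leaves implicit.

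One small slip: right spanning comes from \eqref{eq:move-right}, not \eqref{eq:move-left}, as your own concrete sentence actually uses. That formula already expresses each $d\,s^\alpha$ as a right combination, so no induction on $|\alpha|$ is needed.
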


\begin{proof}
The associated pairs are unique because each $s_i\notin D$.
Lemma~\ref{lem:compatibility} gives their compatibility. The construction
in Proposition~\ref{prop:construction} and the defining relations of the
$s_i$ give the substitution homomorphism. Its image is the generated
subring, and its kernel is zero by left independence. Moving coefficients
by Lemma~\ref{lem:triangular} gives right spanning and right independence;
the lower-degree corrections also show equality of the degree
filtrations.
\end{proof}

\begin{definition}\label{def:normalization}
An extension $S/D$ is \emph{skew normalizable on the left} if it is
finite as a left module over $D[s_1,\ldots,s_r]$ for an automorphically
independent tuple. The subring is called a \emph{left skew normalization
subring}. The empty tuple is allowed. Unless a side is specified,
``skew normalization'' means left skew normalization. Right skew
normalization is defined analogously.
\end{definition}

The coefficient associated pairs $(\sigma,\delta)$ in a normalization are not required to be the
pairs in a specified presentation of $S$. This is the derivation-inclusive
notion in \cite[Definition~3.5]{HoangToan2026}, not the stricter
automorphism-only notion in \cite[Definition~2.16]{ParanVo2025}.

\section{Standard monomials, relative growth, and Hilbert functions}
\label{sec:growth}

Let $D$ be a division ring. Fix an automorphic presentation $S=P/I$ with $m$ commuting variables $x_1,\dots,x_m$, where  $P=D[x_1,\ldots,x_m;(\sigma_1,\delta_1),\ldots,(\sigma_m,\delta_m)]$  and $I$ is a proper two-sided ideal of $P$. Define
\[
 F_qP=\bigoplus_{|\alpha|\leq q}Dx^\alpha,\qquad
 F_qS=(F_qP+I)/I\qquad(q\in\N).
\]
This is an exhaustive filtration by left finite-dimensional $D$-vector spaces,
with $F_0S=D$ and 
\begin{equation}\label{inclusion1}
    F_pS\,F_qS\subseteq F_{p+q}S.
\end{equation}
We define the Hilbert function of $S$ with respect to the filtration $F$ as $$h_S(q)=\ldim{F_qS},\quad q\in \N.$$

Choose a degree-compatible monomial order $\prec$ on $\N^m$:
it is a well-order compatible with addition, and $|\alpha|<|\beta|$
implies $\alpha\prec\beta$. For example, one can choose $\prec$ to be the graded lexicographic order. For $0\ne f\in P$, let $\lexp(f)$ be its
largest exponent with respect to $\prec$. Put
\[
 \Lambda=\LE(I)=\{\lexp(f):0\ne f\in I\},\qquad
 \Delta=\N^m\setminus\Lambda.
\]
When $I=0$ we take $\Lambda=\varnothing$.

\begin{lemma}[Monomial $D$-basis]\label{lem:standard}
\begin{enumerate}
    \item The set $\Lambda$ is upward closed for the componentwise order: if $\alpha\in \Lambda$, then $\alpha+\beta\in\Lambda$ for all $\beta\in\N^{m}$.
    \item The classes $$(\overline{x^\alpha}:=x^\alpha+I)_{\alpha\in\Delta}$$ form a left
$D$-basis of $S$. 
    \item For each $q\in\N$, the set $\big\{\overline{x^\alpha}:{\alpha\in\Delta},\ |\alpha|\leq q\big\}$  forms a basis of $F_qS$, and consequently
\begin{equation}\label{eq:hilbert-count}
 h_S(q)=\rdim{F_qS}
 =\#\{\alpha\in\Delta:|\alpha|\leq q\}.
\end{equation}
\end{enumerate}
\end{lemma}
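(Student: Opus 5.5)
The plan is to prove the three parts in order, using a left division algorithm in $P$ with respect to $\prec$. Everything rests on one fact about leading terms. By the triangular movement of Lemma~\ref{lem:triangular} applied to the $x_i$, for $f\in P$ with leading exponent $\alpha$ and leading coefficient $c$, and for any $\beta\in\N^m$, the product $x^\beta f$ has leading exponent $\alpha+\beta$. Its leading coefficient is $\sigma^\beta(c)\ne0$. Indeed, moving $x^\beta$ past a coefficient $d$ gives $\sigma^\beta(d)x^\beta$ plus terms of strictly smaller total degree. Since $x^\beta x^\gamma=x^{\beta+\gamma}$ (the variables commute), every other term has exponent $\prec\alpha+\beta$. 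This uses compatibility of $\prec$ with addition, and degree-compatibility to dominate the lower total-degree corrections.

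For (i), take $\alpha=\lexp(f)$ with $0\ne f\in I$. The element $x^\beta f$ lies in $I$, since $I$ is a two-sided (in particular left) ideal. By the fact above its leading exponent is $\alpha+\beta$, so $\alpha+\beta\in\Lambda$.

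For (ii), I would prove spanning and independence separately.

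\emph{Spanning.} Argue by well-ordered induction on $\lexp(g)$ that every $g\in P$ is congruent modulo $I$ to a left $D$-combination of monomials $x^\gamma$ with $\gamma\in\Delta$ and $\gamma\preceq\lexp(g)$. If $\lexp(g)=\gamma\in\Delta$, split off its leading term and apply induction to the remainder. If $\gamma\in\Lambda$, pick $f\in I$ with $\lexp(f)=\gamma$ and leading coefficient $c$. Subtract $ac^{-1}f$, where $a$ is the leading coefficient of $g$; left multiplication by a scalar does not change exponents. This strictly lowers the leading exponent, and the induction terminates because $\prec$ is a well-order.

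\emph{Independence.} If $\sum_{\gamma\in\Delta}d_\gamma x^\gamma\in I$ with some $d_\gamma\ne0$, it is a nonzero element of $I$ whose leading exponent lies in $\Delta$, which is a contradiction.

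For (iii), the key point is that the reduction in (ii) never increases total degree. Each step replaces $g$ by elements whose exponents are all $\preceq\lexp(g)$. By degree-compatibility, a monomial $\preceq\lexp(g)$ has total degree at most $|\lexp(g)|\le q$; this is the step needing care, since it uses that $|\beta|>|\alpha|$ forces $\beta\succ\alpha$. Hence every class in $F_qS$ is a left combination of $\overline{x^\gamma}$ with $\gamma\in\Delta$ and $|\gamma|\le q$. These classes are independent by (ii) and clearly lie in $F_qS$, which gives the left count $h_S(q)=\#\{\alpha\in\Delta:|\alpha|\le q\}$.

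For the right dimension, I would rewrite left combinations as right ones using \eqref{eq:move-right}. The correction terms $x^\beta e(d)$ have $|\beta|<|\alpha|$, so the same triangular argument as in Lemma~\ref{lem:triangular} shows that $F_qP$ is also the right span of $\{x^\alpha:|\alpha|\le q\}$. Right reduction works identically with right leading coefficients, using $fx^\beta\in I$. So the same standard monomials form a right basis of $F_qS$, and the right dimension gives the same count.

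The main obstacle is the leading-term fact under multiplication by $x^\beta$ in the presence of derivations. I would check it carefully via \eqref{eq:move-left}: all correction terms have strictly smaller total degree, hence are $\prec$-smaller after adding exponents.
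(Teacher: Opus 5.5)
Your proposal is correct and follows essentially the paper's route: upward closure from $\lexp(x^\beta f)=\alpha+\beta$ via the triangular movement rule, spanning by leading-term reduction modulo $I$, independence because a nonzero element of $I$ cannot have its leading exponent in $\Delta$, and degree-compatibility of $\prec$ to keep the reduction inside $F_qP$ (a point the paper uses only implicitly). Your right-dimension argument supplies what the paper only asserts in $h_S(q)=\rdim{F_qS}$, and it goes through once you state explicitly that, by \eqref{eq:move-left} and \eqref{eq:move-right}, an element has the same leading exponent in its left and right expansions (with right leading coefficient $(\sigma^\alpha)^{-1}(c)$); then the right standard set is again $\Delta$, and the right reduction needs only right scalar multiples $fe$, not $fx^\beta$.
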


\begin{proof}
(i) If $\alpha\in\Lambda$, then $\alpha=\lexp(f)$ for $0\ne f\in I$. For any $\beta\in\N^{m}$, 
$\lexp(x^\beta f)=\alpha+\beta$ by
Lemma \ref{lem:triangular}, and $x^\beta f\in I$.
Thus $\Lambda$ is upward closed.

Now we prove (ii) and (iii). Take any monomial \(x^\alpha\). If \(\alpha \in \Delta\), there is nothing to do. If \(\alpha \in \Lambda\), by definition there exists \(f \in I\) whose leading exponent is \(\alpha\). After multiplying \(f\) on the left by a suitable nonzero element of \(D\), we may write
\[
f
=
x^\alpha
+
\sum_{\beta \prec \alpha} c_\beta x^\beta,
\]
where \(\beta \prec \alpha\) refers to the chosen monomial order. 
Since \(f \in I\), in the quotient \(S=P/I\) we have
\[
\overline{x^\alpha}
=
-\sum_{\beta \prec \alpha} c_\beta \overline{x^\beta}.
\]
Thus the class of \(x^\alpha\) is expressed as a combination of classes of strictly smaller monomials.
 If some of those smaller exponents still belong to \(\Lambda\), we repeat the reduction. Because the monomial order is well-ordered, this process must stop after finitely many steps. Eventually we express \(\overline{x^\alpha}\) as a left \(D\)-linear combination of monomials whose exponents lie in \(\Delta\). Hence
the set $\left\{
\overline{x^\alpha} : \alpha \in \Delta
\right\}$
spans \(S\) as a left \(D\)-vector space and the set $\left\{
\overline{x^\alpha} : \alpha \in \Delta,|\alpha|\le q
\right\}$
spans \(F_q S\) as a left \(D\)-vector space.

Next, we show that $\left\{
\overline{x^\alpha} : \alpha \in \Delta
\right\}$ is left independent over $D$. Suppose there were a nontrivial relation
\[
\sum_{\alpha \in U}
d_\alpha \overline{x^\alpha}
=
0,
\]
where \(U \subseteq \Delta\) is finite and not all \(d_\alpha\) vanish. Then we have $g
:=
\sum_{\alpha \in U}
d_\alpha x^\alpha
\in I.$ 
Choose the largest exponent
\[
\alpha_0
=
\max
\left\{
\alpha \in U : d_\alpha \neq 0
\right\}
\]
with respect to the monomial order. Then
$\operatorname{le}(g)=\alpha_0.$ 
Since \(g \in I\), by definition of \(\Lambda\),
\[
\alpha_0 \in \Lambda.
\]
But every exponent occurring in \(g\) was chosen from \(U \subseteq \Delta\), so $\alpha_0 \in \Delta.$
This is impossible because $\Delta = \N^m \setminus \Lambda$.
Therefore no nontrivial relation can exist. Thus
$$\left\{
\overline{x^\alpha} : \alpha \in \Delta
\right\}
\text{ is a left \(D\)-basis of \(S\)},
$$
and $$\left\{
\overline{x^\alpha} : \alpha \in \Delta,\ |\alpha|\le q
\right\}
\text{ is a left \(D\)-basis of \(F_qS\).}
$$
Finally, for any two elements $\beta\prec\alpha$ in $\Delta$, if $\overline{x^\alpha}=\overline{x^\beta}$ then $x^\alpha-x^\beta\in I$, and hence $\mathrm{le}(x^\alpha-x^\beta)=\alpha$. However, $\Lambda$ is upward closed, $\alpha\in \Lambda$. This is a contradiction. Hence $\#\left\{
\overline{x^\alpha} : \alpha \in \Delta
\right\}=|\Delta|$. Therefore, 
$$ h_S(q)=\rdim{F_qS}
 =\#\{\alpha\in\Delta:|\alpha|\leq q\}.$$
This completes our proof.
\end{proof}

\vspace*{0.5cm}
Now, for each coordinate set $J\subseteq\{1,\ldots,m\}$, define
$ \N^J=\{\alpha\in\N^m:\alpha_i=0\text{ for }i\notin J\}.$
Set $$ d_\Delta=\max\{|J|:\N^J\subseteq\Delta\}.$$
The empty subset is admissible because $0\in\Delta$. 
If $\mathbb N^J\subseteq\Delta$, then all monomials in the elements
\[
\{\overline{x_j}=x_j+I:j\in J\}
\]
are left $D$-linearly independent. Since the elements $\overline{x_j}$,
$j\in J$, commute pairwise and are automorphic over $D$, they form an
automorphically left algebraically independent set. Therefore
\begin{equation}\label{inequality_0}
    \operatorname{atrdeg}^{\,l}_D(S)\geq d_\Delta.
\end{equation}

\begin{lemma}[Counting monomials]\label{lem:counting}
Let $U\subseteq\N^m$ be upward closed with $0\notin U$,
and set $V=\N^m\setminus U$ and $d=d_V=\max\{|J|:\N^J\subseteq V\}$.
Then 
 \begin{equation}\label{eq:counting-bounds}
 \binom{q+d}{d}
 \leq\#\{\alpha=(\alpha_1,\dots,\alpha_m)\in V:|\alpha|\leq q\}
 \leq C(q+1)^d\qquad(q\in\N)
\end{equation}
for a positive integer $C$ determined by the minimal elements of
$U$.
\end{lemma}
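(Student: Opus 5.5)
Both bounds in \eqref{eq:counting-bounds} come from one combinatorial idea: the upward closed set $U$ is generated by finitely many minimal elements (Dickson's lemma), so $V$ splits into finitely many pieces, each an axis-parallel ``slab'' with some coordinates free and the rest bounded. The lower bound is immediate from the definition of $d$, while the upper bound needs the slab decomposition.

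\emph{Lower bound.} Choose $J$ with $|J|=d$ and $\N^J\subseteq V$. The exponents $\alpha\in\N^J$ with $|\alpha|\le q$ number exactly $\binom{q+d}{d}$, the count of monomials of degree at most $q$ in $d$ variables. For $d=0$ this is the single element $0\in V$, which lies in $V$ since $0\notin U$.

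\emph{Upper bound.} By Dickson's lemma, $U$ has finitely many minimal elements $\mu^{(1)},\dots,\mu^{(k)}$, and $\alpha\in U$ if and only if $\alpha\ge\mu^{(t)}$ componentwise for some $t$. Put $M=\max_{t,i}\mu^{(t)}_i$. For $\alpha\in V$, let $J(\alpha)=\{i:\alpha_i\ge M\}$ be its set of ``large'' coordinates.

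The key claim is that $\N^{J(\alpha)}\subseteq V$ whenever $\alpha\in V$. Suppose not. Then some $\beta\in\N^{J(\alpha)}$ lies in $U$, so $\beta\ge\mu^{(t)}$ for some $t$. This forces $\mu^{(t)}_i=0$ for $i\notin J(\alpha)$. For $i\in J(\alpha)$ we have $\mu^{(t)}_i\le M\le\alpha_i$. Together these give $\alpha\ge\mu^{(t)}$, hence $\alpha\in U$, a contradiction. Consequently $|J(\alpha)|\le d$.

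Now group the elements of $V$ by the value $J=J(\alpha)$. There are at most $2^m$ possible sets $J$. For a fixed $J$ with $|J|=j\le d$:
\begin{itemize}
\item coordinates outside $J$ lie in $\{0,\dots,M-1\}$, giving at most $M^{m-j}$ choices;
\item coordinates in $J$ lie in $[0,q]$ when $|\alpha|\le q$, giving at most $(q+1)^j\le(q+1)^d$ choices.
\end{itemize}
Summing over all $J$ gives the bound with $C=2^m\max(M,1)^m$. This constant depends only on the minimal elements of $U$, as the statement requires.

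The main obstacle is the claim $\N^{J(\alpha)}\subseteq V$. It requires the threshold $M$ to be at least every coordinate of every minimal generator, since then an element of $U$ supported on $J(\alpha)$ forces $\alpha$ itself into $U$. Once the threshold is chosen this way, the argument is short.
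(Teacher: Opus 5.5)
Your proof is correct and follows essentially the same route as the paper: Dickson's lemma, a threshold $M$ on the coordinates of the minimal elements of $U$, the observation that the set of large coordinates $J(\alpha)$ of any $\alpha\in V$ satisfies $\N^{J(\alpha)}\subseteq V$ (hence $|J(\alpha)|\le d$), and a count of the resulting slabs. The differences are cosmetic: you take $M$ to be the maximum rather than $1+$ the maximum, and you bound the number of sets $J$ crudely by $2^m$ instead of $\sum_{e\le d}\binom{m}{e}$; your use of $\max(M,1)$ also neatly covers the case $U=\varnothing$.
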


\begin{proof}
Let $J$ be a subset of $\{1,\dots,m\}$ such that $|J|=d$ and
$\mathbb N^J\subseteq V$. Then
\[
\#\{\alpha=(\alpha_1,\dots,\alpha_m)\in V:|\alpha|\leq q\}
\geq
\#\left\{
\alpha\in\mathbb N^J:|\alpha|\leq q
\right\}
=
\binom{q+d}{d}.
\]

Dickson's lemma implies that $U$ has finitely
many  minimal elements
\[
\gamma^{(1)},\ldots,\gamma^{(r)},\quad \text{where }\ \gamma^{(i)}=( \gamma^{(i)}_1,\dots,\gamma^{(i)}_m),
\] with respect to the componentwise monomial order.
\[
 M=1+\max\bigl\{\gamma_i^{(\ell)}:1\leq\ell\leq r,\ 1\leq i\leq m\bigr\}.
\]
For $\alpha=(\alpha_1,\dots,\alpha_m)\in V$, set
$J_\alpha
:=
\{i:\alpha_i\geq M\}.$
We claim that
\[
|J_\alpha|\leq d.
\]

Suppose, to the contrary, that $|J_\alpha|>d$. By the definition of $d$,
we have $\mathbb N^{J_\alpha}\not\subseteq V,$
and hence there exists $\beta=(\beta_1,\dots,\beta_m)\in\mathbb N^{J_\alpha}\cap U.$
Since the elements $\gamma^{(1)},\ldots,\gamma^{(r)}$ are the minimal
elements of $U$, there exists $\ell$ such that
\[
\gamma^{(\ell)}\leq\beta
\]
componentwise. Thus $\gamma^{(\ell)}$ is supported in $J_\alpha$.
Moreover, by the choice of $M$,
\[
\gamma_i^{(\ell)}<M\leq\alpha_i
\]
for every $i\in J_\alpha$.   Hence $\gamma^{(\ell)}\leq\alpha.$
Since $U$ is upward closed, this implies $\alpha\in U,$
contrary to $\alpha\in V$. Thus indeed
\[
|J_\alpha|\leq d.
\]
\begin{enumerate}
    \item[(a)] \textbf{Split $V$ according to the large coordinates.}

    For each $\alpha\in V$ with $|\alpha|\le q$, if $i\notin J_\alpha$, then from the definition $J_\alpha:=\{i:\alpha_i\ge M\}$ we have
    \[
    0\le \alpha_i<M.
    \]
    Hence the coordinate $\alpha_i$ has only $M$ possible values: $ \alpha_i\in\{0,1,\ldots,M-1\}.$ 
    On the other hand, if $i\in J_\alpha$, then
    \[
    M\le \alpha_i\le q.
    \]
    In particular, $\alpha_i$ has at most $q-M+1$ possible values, and hence
    at most $q+1$ possible values.

    Thus, for a fixed subset
    \[
    J\subseteq\{1,\ldots,m\},
    \qquad |J|=r\le d,
    \]
    the number of possible exponent vectors $\alpha$ satisfying
    \[
    J_\alpha=J,\qquad |\alpha|\le q
    \]
    is at most
    \[
    M^{m-r}(q+1)^r.
    \]
    \item[(b)] \textbf{Sum over all possible subsets $J$.}

    For each $e\le d$, there are
    \[
    \binom{m}{e}
    \]
    subsets $J_e\subseteq\{1,\ldots,m\}$ of cardinality $e$.  For each $\alpha\in V$, $J_\alpha=J_e$ for some $J_e$.    Therefore
    \[
    \begin{aligned}
    \#\{\alpha\in V:|\alpha|\le q\}
    &\le
    \sum_{e=0}^{d}
    \binom{m}{e}M^{m-e}(q+1)^e\\
    &\le \sum_{e=0}^{d}
    \binom{m}{e}M^{m-e}(q+1)^d\\
    &=C(q+1)^d\quad \text{where} \ \ C=\sum_{e=0}^{d}
    \binom{m}{e}M^{m-e}.
    \end{aligned}
    \]
    \end{enumerate}
This proves the lemma.
\end{proof}

\begin{definition}[Asymptotic notation]
   For two nonnegative  functions $f(n)$ and $g(n)$, we write:
  \begin{enumerate}
    \item[(i)] \(f(n)=\Omega(g(n))\) if there exist constants
    \(C>0\) and \(n_0\in\mathbb{N}_0\) such that
    \[
    f(n)\geq Cg(n)
    \qquad\text{for all } n\geq n_0.
    \]

    \item[(ii)] \(f(n)=O(g(n))\) if there exist constants
    \(D>0\) and \(n_0\in\mathbb{N}_0\) such that
    \[
    f(n)\leq Dg(n)
    \qquad\text{for all } n\geq n_0.
    \]

    \item[(iii)] \(f(n)=\Theta(g(n))\) if there exist constants
    \(C,D>0\) and \(n_0\in\mathbb{N}_0\) such that
    \[
    Cg(n)\leq f(n)\leq Dg(n)
    \qquad\text{for all } n\geq n_0.
    \]
\end{enumerate}
\end{definition}

\begin{theorem}[Growth and independence]\label{thm:growth}
Let $S/D$ be automorphically finitely generated. With the preceding
notation, we have
\begin{equation}\label{eq:main-equality}
 \atrdeg_D(S)=\dL(S)=d_\Delta.
\end{equation}
This is a finite nonnegative integer, and a subtuple of the coordinate images
$\bx_1,\ldots,\bx_m$ attains it. If its value is $d$, then
\[
 h_S(q)=\Theta(q^{ \atrdeg_D(S)}),\qquad
 \lim_{q\to\infty}\frac{\log h_S(q)}{\log q}=d.
\]
\end{theorem}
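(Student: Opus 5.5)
We prove the chain $d_\Delta\le\atrdeg_D(S)\le\dL(S)\le d_\Delta$. The first inequality is \eqref{inequality_0}, and it is realized by the coordinate images $\bx_j$, $j\in J$, for any $J$ with $|J|=d_\Delta$ and $\N^J\subseteq\Delta$. This also shows that a subtuple of the coordinates attains the common value. The second inequality is recorded in Definition~\ref{def:commuting-degree}. Hence all of the work lies in the upper bound $\dL(S)\le d_\Delta$, and for this we compare growth rates.

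\textbf{Upper bound.} Let $s_1,\ldots,s_r\in S$ be commuting elements whose monomials $s^\alpha$ are left $D$-linearly independent. Since $F$ is exhaustive, there is a $c\ge1$ with all $s_i\in F_cS$. By \eqref{inclusion1}, every $s^\alpha$ with $|\alpha|\le q$ lies in $F_{cq}S$. These $\binom{q+r}{r}$ monomials are left independent, so
\[
 \binom{q+r}{r}\le h_S(cq).
\]
By Lemma~\ref{lem:standard}(i), $\Lambda$ is upward closed. Moreover $0\notin\Lambda$, because $I$ is proper: if $0\in\Lambda$, then $I$ would contain a nonzero element of $D$, hence a unit. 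So Lemma~\ref{lem:counting} applies with $U=\Lambda$ and $V=\Delta$. Combined with \eqref{eq:hilbert-count}, it gives $h_S(cq)\le C(cq+1)^{d_\Delta}\le Cc^{d_\Delta}(q+1)^{d_\Delta}$. The left side of the previous display is at least $q^r/r!$. Letting $q\to\infty$ therefore forces $r\le d_\Delta$, so $\dL(S)\le d_\Delta$. In particular every quantity in \eqref{eq:main-equality} is finite, bounded by $m$.

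\textbf{Growth.} With $d=d_\Delta$, Lemma~\ref{lem:counting} and \eqref{eq:hilbert-count} give
\[
 \binom{q+d}{d}\le h_S(q)\le C(q+1)^d .
\]
The left side is at least $q^d/d!$. For $q\ge1$ the right side is at most $C2^dq^d$. This gives $h_S(q)=\Theta(q^d)$. Taking logarithms and dividing by $\log q$, both bounds tend to $d$, which gives the limit statement. When $d=0$ the function $h_S$ is bounded and positive, and the limit is still $0$.

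\textbf{Main obstacle.} The only delicate point is the upper bound on $\dL(S)$. There the independent tuple need not be automorphic, and its elements lie in some $F_cS$ rather than in $F_1S$. The submultiplicativity \eqref{inclusion1}, together with the polynomial bound of Lemma~\ref{lem:counting}, is exactly what absorbs the constant $c$. So I expect this step to go through once properness of $I$ is checked to give $0\notin\Lambda$.
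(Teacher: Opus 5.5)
Your proposal is correct and follows the paper's proof step by step. You get $d_\Delta\le\atrdeg_D(S)\le\dL(S)$ from the coordinate subtuple, the bound $\binom{q+d}{d}\le h_S(q)\le C(q+1)^d$ from Lemma~\ref{lem:counting}, and $\dL(S)\le d_\Delta$ by placing the $\binom{q+r}{r}$ independent monomials in $F_{cq}S$. Your explicit check that properness of $I$ gives $0\notin\Lambda$, and your absorption of $c$ via $cq+1\le c(q+1)$, supply details the paper leaves implicit.
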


\begin{proof}
If $\N^J\subseteq\Delta$, Lemma~\ref{lem:standard} shows that
$(\bx_j)_{j\in J}$ is automorphically independent. Therefore
\[
 d_\Delta\leq\atrdeg_D(S)\leq\dL(S).
\]
Again, by Lemma~\ref{lem:standard} $$h_S(q)=\# \big\{\alpha=(\alpha_1,\dots,\alpha_m)\in \Delta:|\alpha|\leq q\big\}.$$    
Denote $d=d_\Delta$, it follows from Lemma \ref{lem:counting} that   
    \[
     \binom{q+d}{d}
 \leq h_S(q)
 \leq C(q+1)^{d}.
    \]
    Since
    \[
    \binom{q+d}{d}
    =
    \frac{(q+1)(q+2)\cdots(q+d)}{d!}=\frac{q^d}{d!}(1+\frac{1}{q})(1+\frac{2}{q})\cdots(1+\frac{d}{q})
    \sim
    \frac{q^d}{d!}\quad (q\rightarrow \infty),
    \]   
    and 
    $$C(q+1)^d\sim Cq^d\quad (q\rightarrow \infty),$$
    we obtain
    \[
    h_S(q)=\Theta(q^d).
    \]
Next, we prove $$d_\Delta\ge \mathcal{L}_D(S).$$
Let $t_1,\ldots,t_r\in S$ be any tuple that is left algebraically
independent over $D$. Choose $c\geq1$ such that $t_i\in F_cS
\ 
(i=1,\ldots,r).$
Then the monomials $t_1^{i_1}\cdots t_r^{i_r}\
(i_1+\cdots+i_r\leq q) $
are left $D$-linearly independent, and all belong to $F_{cq}S$ due to inclusion (\ref{inclusion1}).
Therefore
\[
h_S(cq)
\geq
\binom{q+r}{r}\sim\frac{q^r}{r!}.
\]
It follows that $r\leq d.$ 
By taking the maximal possible $r$, we obtain $\dL(S)\leq d.$ Hence, we obtain that
\[
 d_\Delta=\atrdeg_D(S)=\dL(S)
\]
and
\[
h_S(q)=\Theta(q^{\operatorname{atrdeg}^{\,l}_D(S)}).
\]
\end{proof}

The equality with $\dL(S)$ explains the numerical role of the
automorphic requirement. In the present class it does not change the
maximum. Its structural role is to identify the generated subring
with a skew polynomial ring, as in Lemma~\ref{lem:subring}.

\begin{proposition}[Hilbert polynomial and series]\label{prop:hilbert}
Let $\gamma^{(1)},\ldots,\gamma^{(r)}$ be the minimal elements of
$\Lambda$. The case $r=0$ is allowed. For $L\subseteq\{1,\ldots,r\}$,
let $\gamma_L$ be the componentwise maximum of the $\gamma^{(\ell)}$
with $\ell\in L$, and put $\gamma_\varnothing=0$.
Define
\[
 B_m(t)=
 \begin{cases}\binom{t+m}{m},&t\geq0,\\0,&t<0.\end{cases}
\]
Then, for every $q\in\N$,
\begin{equation}\label{eq:hilbert-inclusion-exclusion}
 h_S(q)=\sum_{L\subseteq\{1,\ldots,r\}}
              (-1)^{|L|}B_m(q-|\gamma_L|),
\end{equation}
and, as an identity of formal power series, 
\begin{equation}\label{eq:hilbert-series}
 \sum_{q\geq0}h_S(q)z^q
 =\frac{\displaystyle\sum_{L\subseteq\{1,\ldots,r\}}
                    (-1)^{|L|}z^{|\gamma_L|}}
        {(1-z)^{m+1}}.
\end{equation}
In particular, $h_S(q)$ agrees for all sufficiently large $q$ with a
polynomial $H_S(q)\in\Q[q]$ of degree $\atrdeg_D(S)$ and positive
leading coefficient.
\end{proposition}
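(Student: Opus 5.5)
Since $h_S(q)=\#\{\alpha\in\Delta:|\alpha|\le q\}$ by Lemma~\ref{lem:standard}(iii), and $\Delta=\N^m\setminus\Lambda$ with $\Lambda=\bigcup_{\ell=1}^r(\gamma^{(\ell)}+\N^m)$ (using upward closedness from Lemma~\ref{lem:standard}(i) and Dickson's lemma), the proof is a counting argument by inclusion--exclusion. Write $A_\ell=\gamma^{(\ell)}+\N^m$. Then $h_S(q)=\#\{|\alpha|\le q\}-\#\{\alpha\in\bigcup_\ell A_\ell:|\alpha|\le q\}$. The key observation is that for $L\ne\varnothing$, $\bigcap_{\ell\in L}A_\ell=\gamma_L+\N^m$, since $\alpha\ge\gamma^{(\ell)}$ componentwise for all $\ell\in L$ iff $\alpha\ge\gamma_L$. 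For any $\gamma\in\N^m$, the map $\alpha\mapsto\alpha-\gamma$ is a bijection from $\{\alpha\in\gamma+\N^m:|\alpha|\le q\}$ onto $\{\beta\in\N^m:|\beta|\le q-|\gamma|\}$, whose cardinality is $B_m(q-|\gamma|)$, including the empty case $q<|\gamma|$. Inclusion--exclusion over the finitely many $A_\ell$, with the $L=\varnothing$ term giving $B_m(q)$ from $\gamma_\varnothing=0$, yields \eqref{eq:hilbert-inclusion-exclusion}. The case $r=0$ gives $h_S(q)=B_m(q)$.

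For the series, I use $\sum_{q\ge0}B_m(q-k)z^q=z^k\sum_{t\ge0}\binom{t+m}{m}z^t=z^k/(1-z)^{m+1}$. Summing over $L$ with signs gives \eqref{eq:hilbert-series} as a formal identity.

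For the polynomial statement, let $K=\max_L|\gamma_L|$. For $q\ge K$ every argument $q-|\gamma_L|$ is nonnegative, so each $B_m(q-|\gamma_L|)$ equals the polynomial $\binom{q-|\gamma_L|+m}{m}\in\Q[q]$. Hence $h_S(q)=H_S(q)$ for a polynomial $H_S$ for all $q\ge K$. To get its degree and leading coefficient without cancellation analysis, I invoke Theorem~\ref{thm:growth}: $h_S(q)=\Theta(q^d)$ with $d=\atrdeg_D(S)$. A polynomial satisfying $c_1q^d\le H_S(q)\le c_2q^d$ for large $q$ with $c_1>0$ must have degree exactly $d$. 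Its leading coefficient is then $\lim H_S(q)/q^d\ge c_1>0$.

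The only step needing care is the identification $\bigcap_{\ell\in L}A_\ell=\gamma_L+\N^m$ and the uniform bijection count including the vanishing case; the degree claim is otherwise immediate from Theorem~\ref{thm:growth}, which avoids analysing the alternating sum directly.
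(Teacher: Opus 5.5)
Your proposal is correct and follows the paper's proof essentially step for step. The shared steps are: the decomposition $\Lambda=\bigcup_\ell(\gamma^{(\ell)}+\N^m)$ with $\bigcap_{\ell\in L}A_\ell=\gamma_L+\N^m$, the shift-and-count giving $B_m(q-|\gamma_L|)$, inclusion--exclusion, the termwise series $z^{k}/(1-z)^{m+1}$, and an appeal to Theorem~\ref{thm:growth} for the degree and the positivity of the leading coefficient. Your signs are also more careful than the paper's: you keep $(-1)^{|L|}$ throughout, with the $L=\varnothing$ term supplying $B_m(q)$, whereas the paper's inclusion--exclusion and series displays contain sign slips that end with $(-1)^{|L|+1}$, contrary to the statement.
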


{
\begin{proof}

{\bf 1) The standard stars-and-bars argument.}
Since $\Lambda \subseteq \N^m$ is upward closed and
$\gamma^{(1)},\ldots,\gamma^{(r)}$ are its minimal elements, we have
\[
\Lambda
=
\bigcup_{\ell=1}^r
\left(
\gamma^{(\ell)}+\N^m
\right).
\]
For a nonempty subset
\[
L \subseteq \{1,\ldots,r\},
\]
let $\gamma_L$ denote the coordinatewise maximum of the vectors
$\gamma^{(\ell)}$, $\ell \in L$. Then
\[
\bigcap_{\ell \in L}
\left(
\gamma^{(\ell)}+\N^m
\right)
=
\gamma_L+\N^m.
\]
Indeed, an exponent $\alpha=(\alpha_1,\ldots,\alpha_m)$ belongs to the
intersection if and only if
\[
\alpha_i \ge \gamma_i^{(\ell)}
\qquad
\text{for every } i=1,\ldots,m
\text{ and every } \ell\in L,
\]
which is equivalent to
\[
\alpha_i
\ge
\max_{\ell\in L}\gamma_i^{(\ell)}
=
(\gamma_L)_i
\qquad
\text{for every } i.
\]

For each $q\ge 1$, denote
\[U_{q,\gamma_L}=\big\{
\alpha \in \gamma_L+\N^m:\ |\alpha|\le q\big\}.
\]
Every $\alpha\in U_{q,\gamma_L}$ can be written uniquely as
\[
\alpha=\gamma_L+\beta,
\qquad
\beta\in\N^m.
\]
Since
\[
|\alpha|
=
|\gamma_L|+|\beta|,
\]
the condition $|\alpha|\le q$ is equivalent to
\[
|\beta|
\le
q-|\gamma_L|.
\]
By the standard stars-and-bars argument, we obtain
\begin{equation}\label{eq: bars_stars}
    \#U_{q,\gamma_L}=B_m(q-|\gamma_L|).
\end{equation}

{
{\bf 2) Inclusion-exclusion principle.} Let $T_q=\{\alpha\in\N^m:\ |\alpha|\le q\}$
be the set of all monomial exponents of total degree at most \(q\) correspond to \(T_q\). 
We will count 
\[
T_q\cap\Delta=T_q\setminus\Lambda.
\]

Since
\[
\Lambda=\bigcup_{\ell=1}^r A_\ell,
\qquad
A_\ell=\gamma^{(\ell)}+\N^m,
\]
we need to count the set
\[
T_q\setminus\Lambda=T_q\setminus\bigcup_{\ell=1}^r A_\ell=T_q\setminus \bigcup_{\ell=1}^r (T_q\cap A_\ell).
\]
The inclusion--exclusion principle gives
$$ \#\bigg(\bigcup_{\ell=1}^r (T_q\cap A_\ell)\bigg)=\sum_{\emptyset\ne L\subseteq\{1,\ldots,r\}}
(-1)^{|L|}
\#
\left(
T_q\cap\bigcap_{\ell\in L}A_\ell
\right).$$ Then, 
\[
\#\left(T_q\setminus\bigcup_{\ell=1}^r A_\ell\right)
=
\sum_{ L\subseteq\{1,\ldots,r\}}
(-1)^{|L|+1}
\#
\left(
T_q\cap\bigcap_{\ell\in L}A_\ell
\right).
\]
For each \(L\neq\varnothing\),
\[
\bigcap_{\ell\in L}A_\ell
=
\gamma_L+\N^m.
\]
Combining this with (\ref{eq: bars_stars}), we obtain that 
\[
\#
\left(
T_q\cap\bigcap_{\ell\in L}A_\ell
\right)=\#U_{q,\gamma_L}
=
B_m(q-|\gamma_L|).
\]
Therefore \begin{equation}\label{eq:inclusion_exclusion}
    \#(T_q\cap\Delta)=\#(T_q\setminus\Lambda)=\#\left(T_q\setminus\bigcup_{\ell=1}^r A_\ell\right)=\sum_{L\subseteq\{1,\dots,r\}}(-1)^{|L|+1}B_m(q-|\gamma_L|).
\end{equation}
Recall that Lemma~\ref{lem:standard} states that the residue classes
\[
\left\{\overline{x^\alpha} : \alpha \in \Delta\right\}
\]
form a left $D$-basis of $S$ and
$h_S(q)
=\#\left\{
\alpha \in \Delta : |\alpha| \le q
\right\}$. Hence, we have that
\begin{align*}
    h_S(q)&=\#\left\{
\alpha \in \Delta : |\alpha| \le q
\right\}\\&=\#(T_q\cap \Delta)\\
&=
\sum_{L\subseteq\{1,\ldots,r\}}
(-1)^{|L|+1}
B_m(q-|\gamma_L|).
\end{align*}}

{\bf 3) Hilbert series.}
We next compute the generating series. For every integer $a\ge 0$,
\[
\sum_{q\ge 0} B_m(q-a)z^q
=
\sum_{q\ge a}
\binom{q-a+m}{m}z^q.
\]
Setting $n=q-a$, we get
\[
\sum_{q\ge 0} B_m(q-a)z^q
=
z^a
\sum_{n\ge 0}
\binom{n+m}{m}z^n.
\]
Using the standard identity
\[
\sum_{n\ge 0}
\binom{n+m}{m}z^n
=
\frac{1}{(1-z)^{m+1}},
\]
we obtain
\[
\sum_{q\ge 0} B_m(q-a)z^q
=
\frac{z^a}{(1-z)^{m+1}}.
\]

Applying this identity with $a=|\gamma_L|$ and summing over all
subsets $L\subseteq\{1,\ldots,r\}$ gives
\[
\sum_{q\ge 0} h_S(q)z^q
=
\frac{
\displaystyle
\sum_{L\subseteq\{1,\ldots,r\}}
(-1)^{|L|+1}
z^{|\gamma_L|}
}{
(1-z)^{m+1}
}.
\]

Finally, let
\[
Q
=
\max_{L\subseteq\{1,\ldots,r\}}
|\gamma_L|.
\]
For every $q\ge Q$, each term
\[
B_m(q-|\gamma_L|)
=
\binom{q-|\gamma_L|+m}{m}
\]
is a polynomial in $q$. Since only finitely many subsets $L$ occur,
the function $h_S(q)$ agrees, for all sufficiently large $q$, with a
polynomial $H_S(q)\in\mathbb{Q}[q]$.

Let
\[
e=\deg H_S.
\]
Since the leading coefficient of $H_S$ is nonzero and
$H_S(q)=h_S(q)>0$ for all  $q\ge Q$, this leading
coefficient is positive. Consequently,
\[
h_S(q)=\Theta(q^e).
\]
On the other hand, Theorem~\ref{thm:growth} gives
\[
h_S(q)
=
\Theta\!\left(
q^{\operatorname{atrdeg}_D(S)}
\right).
\]
Therefore the two growth exponents must coincide, and hence
\[
e
=
\operatorname{atrdeg}_D(S).
\]
Thus the eventual Hilbert polynomial of $S$ has degree
$\operatorname{atrdeg}_D(S)$ and positive leading coefficient.
\end{proof}}

\begin{proposition}[Presentation independence]\label{prop:filtrations}
Two automorphic presentations of $S/D$ give linearly equivalent
degree filtrations: if they are $F$ and $G$, there exist positive
integers $a,b$ such that
\[
 F_qS\subseteq G_{aq}S,\qquad G_qS\subseteq F_{bq}S
 \qquad(q\in\N).
\]
Consequently the relative growth dimension
\[
 \rgdim_D(S):=\limsup_{q\to\infty}
                 \frac{\log\ldim{F_qS}}{\log q}
\]
is well defined and equals $\atrdeg_D(S)$. The limsup is a limit,
and either left or right dimensions give the same value.
\end{proposition}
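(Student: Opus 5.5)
The proof has four steps: compare the two generating sets, absorb the coefficients from $D$ into the filtration, translate growth, and then handle right dimensions.

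\textbf{Comparing generators.} Let the two presentations be $S=P/I$ with coordinate images $\bx_1,\ldots,\bx_m$, and $S=P'/I'$ with coordinate images $\overline{y}_1,\ldots,\overline{y}_n$. Each $\overline{y}_j$ is an element of $S$. By Lemma~\ref{lem:standard} it is a finite left $D$-combination of standard monomials $\overline{x^\alpha}$, so it lies in $F_{a_j}S$ for some $a_j$. Put $a'=\max(1,a_1,\ldots,a_n)$. Symmetrically, every $\bx_i$ lies in $G_{b'}S$ for some $b'\ge1$.

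\textbf{Absorbing coefficients.} The key observation is that $G_qS$ is the left $D$-span of the products $d\,\overline{y}^{\beta}$ with $|\beta|\le q$. Since $F_0S=D$, the submultiplicativity \eqref{inclusion1} gives
\[
d\,\overline{y}^\beta\in F_0S\,(F_{a'}S)^{|\beta|}\subseteq F_{a'|\beta|}S\subseteq F_{a'q}S .
\]
Hence $G_qS\subseteq F_{a'q}S$, and by symmetry $F_qS\subseteq G_{b'q}S$. For $q=0$ both filtrations equal $D$, so the inclusions hold for every $q\in\N$. This gives the linear equivalence with the constants $b=a'$ and $a=b'$ in the notation of the statement.

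\textbf{Growth.} Write $h_F$ and $h_G$ for the two Hilbert functions. The inclusions give
\[
h_F(q)\le h_G(aq),\qquad h_G(q)\le h_F(bq).
\]
By Theorem~\ref{thm:growth}, each Hilbert function is $\Theta(q^{d})$ with $d=\atrdeg_D(S)$. This exponent is intrinsic to $S/D$, since $\atrdeg_D(S)$ does not refer to any presentation. Therefore $\log h(q)/\log q\to d$ for either filtration, the limsup is an actual limit, and $\rgdim_D(S)=\atrdeg_D(S)$ independently of the presentation. Because the exponent already comes out intrinsically, the comparison inequalities only serve as a consistency check.

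\textbf{Right dimensions.} This is the only step needing care. Here I would show that the images $\overline{x^\alpha}$, $\alpha\in\Delta$, $|\alpha|\le q$, also form a right $D$-basis of $F_qS$.

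First I would check that $F_qP$ is a $D$-bimodule with right basis $(x^\alpha)_{|\alpha|\le q}$. Right stability follows from \eqref{eq:move-left}, because the correction terms have lower total degree. The right basis property follows from the triangular conversion in Lemma~\ref{lem:triangular}, which is Lemma~\ref{lem:subring} applied to $P$ itself.

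Next I would redo the standard-basis argument on the right. I take leading exponents with right coefficients; by \eqref{eq:move-left} the leading coefficient changes by the automorphism $\sigma^\alpha$, which is invertible. So the set of right leading exponents of $I$ is again $\Lambda$. The spanning and independence arguments of Lemma~\ref{lem:standard} then go through verbatim, so $\rdim{F_qS}=h_S(q)$, which is in fact already asserted in \eqref{eq:hilbert-count}.

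Consequently the right-dimension growth exponent is also $d$ for every presentation, and the limsup is again a limit.
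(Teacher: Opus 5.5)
Your proposal is correct and follows the paper's proof. Like the paper, you place the coordinate generators of one presentation in a fixed filtration step of the other, use $D=F_0S$ and multiplicativity to get the linear inclusions, and read off the growth exponent from Theorem~\ref{thm:growth}. The only difference is the right-dimension clause: the paper just cites the equality $h_S(q)=\rdim{F_qS}$ asserted in Lemma~\ref{lem:standard}, whose proof only treats left bases, whereas your right-leading-exponent argument actually proves it. That argument relies on $\prec$ being degree-compatible, so the triangular corrections fall strictly below the leading exponent.
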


\begin{proof}
All coordinate generators of the first presentation belong to
$G_aS$ for some $a\geq1$. Since $D=G_0S$ and $G$ is multiplicative,
every monomial of $F$-degree at most $q$ belongs to $G_{aq}S$.
Interchanging the presentations proves the second inclusion.
The remaining assertions follow from Lemma~\ref{lem:standard} and
Theorem~\ref{thm:growth}.
\end{proof}

{
\begin{corollary}\label{cor:elementary}
Let $S/D$ be automorphically finitely generated.
\begin{enumerate}
\item $\atrdeg_D(S)=0$ if and only if $S$ is finite-dimensional as a
left $D$-vector space; equivalently, it is finite-dimensional on the
right.
\item If $J$ is a proper two-sided ideal of $S$, then
$\atrdeg_D(S/J)\leq\atrdeg_D(S)$, where $D$ is identified with its
image in $S/J$.
\item Let  $z_1,\ldots,z_r$ be central indeterminates over $D$. Then
\[
 \atrdeg_D(S[z_1,\ldots,z_r])=\atrdeg_D(S)+r.
\]
\end{enumerate}
\end{corollary}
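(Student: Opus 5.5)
We prove the three parts of Corollary~\ref{cor:elementary} separately, each time reducing to Theorem~\ref{thm:growth} and the standard monomial count of Lemma~\ref{lem:standard}.

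\textbf{Part (i).} By Theorem~\ref{thm:growth}, $\atrdeg_D(S)=d_\Delta$ and $h_S(q)=\Theta(q^{d_\Delta})$.
\begin{itemize}
\item If $d_\Delta=0$, then $h_S$ is bounded. Since $h_S(q)=\#\{\alpha\in\Delta:|\alpha|\le q\}$ is nondecreasing and exhausts $\Delta$, the set $\Delta$ is finite. Lemma~\ref{lem:standard}(ii) then shows $\ldim S=|\Delta|<\infty$.
\item Conversely, if $\ldim S<\infty$, then $h_S$ is bounded, which forces $d_\Delta=0$.
\item For the right-dimensional version, apply Lemma~\ref{lem:triangular}(i) to the classes $\overline{x^\alpha}$ with $\alpha\in\Delta$. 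Moving coefficients across shows the standard monomials also span $S$ as a right $D$-space, so $\rdim S<\infty$ when $\Delta$ is finite. For the converse, note that $S$ right-finite forces $F_qS$ right-finite of bounded dimension. Here I would use the equality $h_S(q)=\rdim{F_qS}$ recorded in \eqref{eq:hilbert-count}, or equivalently Proposition~\ref{prop:filtrations}'s statement that right dimensions give the same growth.
\end{itemize}

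\textbf{Part (ii).} $S/J=P/I'$, where $I'$ is the preimage of $J$ in $P$. This $I'$ is a proper two-sided ideal containing $I$, so $S/J$ is again a compatible Ore quotient with the same coordinates. Since $I\subseteq I'$, we get $\LE(I)\subseteq\LE(I')$, hence $\Delta'\subseteq\Delta$ and $d_{\Delta'}\le d_\Delta$. Theorem~\ref{thm:growth} converts this into the stated inequality. (Alternatively, $h_{S/J}\le h_S$ because $F_q(S/J)$ is a quotient of $F_qS$.)

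\textbf{Part (iii).} Write
\[
S[z_1,\ldots,z_r]=P[z_1,\ldots,z_r]/I[z],
\]
where the $z_j$ are new variables with pair $(\id,0)$.
\begin{itemize}
\item \emph{Compatibility.} The enlarged family of pairs remains compatible, since the identity commutes with everything and the zero derivation commutes trivially. So $S[z]$ is a compatible Ore quotient.
\item \emph{Growth.} Instead of computing leading exponents directly, the cleanest route is through growth. The left $D$-basis of $S[z]$ is $\{\overline{x^\alpha}z^\beta:\alpha\in\Delta,\ \beta\in\N^r\}$, because $S[z]=\bigoplus_\beta Sz^\beta$ as a left $D$-module. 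With the total degree filtration,
\[
h_{S[z]}(q)=\sum_{k=0}^{q}h_S(q-k)\,\#\{\beta:|\beta|=k\}.
\]
With $h_S(q)=\Theta(q^d)$ this sum is $\Theta(q^{d+r})$: the upper bound is immediate, and for the lower bound restrict to $k\le q/2$. Theorem~\ref{thm:growth} then gives degree $d+r$.
\item \emph{Ideal check.} The main point to verify is that $I[z]=\bigoplus_\beta Iz^\beta$ is a proper two-sided ideal, and that the quotient $P[z]/I[z]$ is indeed $S[z]$. This is clear because the $z_j$ are central.
\end{itemize}
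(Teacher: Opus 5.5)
Your proposal is correct. Parts (i) and (ii) match the paper's argument: bounded filtered dimensions versus growth degree zero, stabilization of the exhaustive filtration, the left--right comparison through the standard monomials, and for (ii) the comparison $h_{S/J}\le h_S$, of which your containment $\Delta'\subseteq\Delta$ is the leading-exponent form.

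The real difference is in (iii). The paper adjoins the $z_j$ with pairs $(\id_D,0)$ and asserts that the standard exponent set may be taken to be $\Delta\times\N^r$. It then reads off $d_{\Delta\times\N^r}=d_\Delta+r$ from the coordinate-subspace characterization in Theorem~\ref{thm:growth}. This tacitly requires a degree-compatible order on $\N^{m+r}$ that restricts to the given one on $\N^m$, together with a check that $\LE(I[z])=\Lambda\times\N^r$. You bypass leading exponents entirely. The decomposition $F_q(S[z])=\bigoplus_{|\beta|\le q}(F_{q-|\beta|}S)z^\beta$ gives your convolution formula for $h_{S[z]}$, and the $\Theta$-part of Theorem~\ref{thm:growth} converts $\Theta(q^{d+r})$ into the degree. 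Your route is independent of the monomial order and needs only an elementary asymptotic estimate. The paper's route is shorter once the order is fixed, and it yields the explicit standard set of $S[z]$, hence via Proposition~\ref{prop:hilbert} its Hilbert series.

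One small simplification in (i): for right-finite $\Rightarrow$ $\atrdeg_D(S)=0$ you need not invoke $h_S(q)=\rdim{F_qS}$, whose proof the paper writes out only on the left. By Lemma~\ref{lem:triangular}(i), an automorphically independent $1$-tuple has right $D$-independent powers, and these cannot exist in a right finite-dimensional space.
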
}

\begin{proof}
For (i), the filtered dimensions are bounded exactly when their
integer growth degree is zero. An increasing exhaustive sequence of
vector spaces of bounded dimension stabilizes. Equality of the  left and right dimensions follows from the standard-monomial bases.
For (ii), the quotient filtration has no larger filtered dimensions,
and the quotient is again automorphically finitely generated.
A proper ideal has zero intersection with the division subring $D$.
For (iii), adjoining $r$ central variables gives a presentation with
coefficient automorphisms $\sigma_1,\ldots,\sigma_m,\id_D,\ldots,\id_D$.
Its standard exponents can be taken to be $\Delta\times\N^r$.
The largest coordinate subspace in this set has dimension
$d_\Delta+r$, so Theorem~\ref{thm:growth} applies.
\end{proof}

\section{Algebraicity, integral extensions, and Gelfand-Kirillov dimension}
\label{sec:extensions}

\subsection{Algebraicity over a maximum independent family}

\begin{definition}\label{def:algebraic-integral}
Let $B\subseteq S$. An element $u\in S$ is \emph{left algebraic over
$B$} if there are coefficients $b_0,\ldots,b_r\in B$, not all zero,
such that
\[
 b_0+b_1u+\cdots+b_ru^r=0.
\]
It is \emph{left integral over $B$} if it satisfies such a relation
with $r\geq1$ and $b_r=1$. The extension is left algebraic, or left
integral, if every element has the respective property. These
expressions are evaluated with coefficients on the indicated side;
no commutation between $u$ and $B$ is imposed.
\end{definition}

Over a division ring a nonzero leading coefficient can be inverted
on the left. Over a polynomial subring it need not be invertible.
The distinction in Definition~\ref{def:algebraic-integral} will be
used throughout this section.

\begin{theorem}[Effective left algebraicity]\label{thm:algebraicity}
Let $S/D$ be automorphically finitely generated with
$\atrdeg_D(S)=n$. Let $s_1,\ldots,s_n$ be automorphically independent
and put $B=D[s_1,\ldots,s_n]$. Then
\begin{itemize}
    \item[(i)] Fix a degree filtration $F$ on $S$.
For $u\in S$, choose $c\geq1$ with
$s_1,\ldots,s_n,u\in F_cS$. If $q\geq1$ satisfies
\begin{equation}\label{eq:exact-degree-bound}
 \binom{q+n+1}{n+1}>h_S(cq),
\end{equation}
then there is a nonzero relation
\begin{equation}\label{eq:effective-relation}
 \sum_{j=0}^q b_j u^j=0,\qquad b_j\in B,
 \qquad \deg_s b_j\leq q-j\ \text{when }b_j\ne0,
\end{equation}
with at least one $b_j\ne0$ for $j\geq1$. Here $\deg_s$ is total
degree in the skew polynomial presentation of $B$.
\item[(ii)] $S$ is left algebraic over $B$.
\end{itemize}
\end{theorem}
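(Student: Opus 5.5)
The plan is a dimension count inside the filtered piece $F_{cq}S$. By Lemma~\ref{lem:subring}, $B$ is a compatible skew polynomial ring with left $D$-basis $(s^\beta)_{\beta\in\N^n}$. So each $b\in B$ with $\deg_s b\le k$ is a unique left $D$-combination of the $s^\beta$ with $|\beta|\le k$.

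For (i), consider the family of elements $s^\beta u^j$ with $j\ge0$, $\beta\in\N^n$ and $|\beta|+j\le q$. These are indexed by exponent vectors $(\beta,j)\in\N^{n+1}$ of total degree at most $q$, so there are exactly $\binom{q+n+1}{n+1}$ of them. Each lies in a product of $|\beta|+j\le q$ factors from $F_cS$. By \eqref{inclusion1} and $F_0S=D$, each therefore lies in $F_{cq}S$, whose left $D$-dimension is $h_S(cq)$.

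Under \eqref{eq:exact-degree-bound} the family is left $D$-linearly dependent, giving a nontrivial relation $\sum_{j,\beta} d_{\beta,j}s^\beta u^j=0$ with $d_{\beta,j}\in D$. Setting $b_j=\sum_{|\beta|\le q-j}d_{\beta,j}s^\beta\in B$ yields $\sum_j b_ju^j=0$ with $\deg_s b_j\le q-j$. Here coefficients stand to the left of $u^j$, as Definition~\ref{def:algebraic-integral} requires. Not all $b_j$ vanish, because the $s^\beta$ form a left $D$-basis of $B$ and not all $d_{\beta,j}$ are zero.

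It remains to show some $b_j\ne0$ with $j\ge1$. If only $b_0\ne0$, then $b_0=0$ would be a nontrivial left $D$-relation among the monomials $s^\beta$. That contradicts automorphic independence of $s_1,\dots,s_n$. This step is the only place where independence of the $s_i$, rather than merely their membership in $F_cS$, enters.

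For (ii), it suffices to show that \eqref{eq:exact-degree-bound} holds for some $q$. By Theorem~\ref{thm:growth} and Lemma~\ref{lem:counting} with $d=n$, we have $h_S(cq)\le C(cq+1)^n=O(q^n)$. Meanwhile $\binom{q+n+1}{n+1}$ grows like $q^{n+1}/(n+1)!$. Hence the inequality holds for all large $q$, and (i) gives a relation that is nontrivial in the sense of Definition~\ref{def:algebraic-integral}.

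I expect the main obstacle to be bookkeeping rather than substance. One must confirm that $h_S$ is computed for the same filtration $F$ used to choose $c$; Proposition~\ref{prop:filtrations} and Theorem~\ref{thm:growth} give the polynomial bound for any presentation. One must also make sure that the nontriviality "some $b_j\ne0$ with $j\ge1$" is extracted correctly, which is exactly the independence argument above.
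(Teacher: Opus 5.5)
Your proposal is correct and follows the paper's proof essentially step for step. You use the same $\binom{q+n+1}{n+1}$-member family $s^\beta u^j$ inside $F_{cq}S$, the same appeal to the left $D$-basis $(s^\beta)$ of Lemma~\ref{lem:subring} to force some $b_j\ne0$ with $j\ge1$, and the same $O(q^n)$ versus $q^{n+1}$ comparison for (ii). One small correction to your commentary: independence of the $s_i$ is also used earlier, to conclude that not all $b_j$ vanish and to make $\deg_s$ well defined, so it is not used only in the final step.
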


{
\begin{proof}
Consider the indexed family
\[
 \bigl(s^\alpha u^j\bigr)_{
       (\alpha,j)\in\N^n\times\N,\ |\alpha|+j\leq q}.
\]
It has $\binom{q+n+1}{n+1}$ members, all in $F_{cq}S$.
Under \eqref{eq:exact-degree-bound}, there is a nontrivial left
$D$-linear dependence
\[
 \sum_{j=0}^q\ \sum_{|\alpha|\leq q-j}
      d_{\alpha,j}s^\alpha u^j=0.
\]
Set $b_j=\sum_{|\alpha|\leq q-j}d_{\alpha,j}s^\alpha$.
The independence of the $s$-monomials ensures that not all $b_j$
vanish. If all $b_j$ with $j\geq1$ vanished, the relation would give
$b_0=0$ as well, a contradiction. The degree bounds are immediate
from Lemma~\ref{lem:subring}.

By Theorem~\ref{thm:growth}, there is a fixed constant $C>0$ such that $h_S(q)<Cq^n$ for all sufficiently large $q$. Then $h_S(cq)<(Cc^n)q^n$ for all sufficiently large $q$. It follows that $$ \binom{q+n+1}{n+1}\geq\frac{1}{(n+1)!}q^{n+1}>(Cc^n)q^n>h_S(cq)$$ for all sufficiently large $q$. Hence, for any $u\in S$,  inequality
\eqref{eq:exact-degree-bound} holds for all sufficiently large $q$.
\end{proof}}

\begin{remark}\label{rem:algebraicity-right}
No commutation of $u$ with the $s_i$ was used. Applying the same
argument to the family $(u^j s^\alpha)$, using right independence and
right dimensions, shows that $S$ is also right algebraic over $B$.
\end{remark}

\subsection{Finite extensions and elementwise integrality}

\begin{theorem}[Finite-extension invariance]\label{thm:finite}
Let $D\subseteq R\subseteq S$, where $R/D$ is automorphically
finitely generated. If $S$ is finite as a left $R$-module, then
\begin{equation}\label{eq:finite-invariance}
 \atrdeg_D(S)=\dL(S)=\atrdeg_D(R).
\end{equation}
No automorphic finite-generation assumption is imposed on $S$.
\end{theorem}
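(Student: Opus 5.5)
We prove the chain
\[
 \atrdeg_D(R)\le\atrdeg_D(S)\le\dL(S)\le\atrdeg_D(R).
\]
The first inequality is monotonicity, since an automorphically independent tuple in $R$ stays independent in $S$. The second is the general bound in Definition~\ref{def:commuting-degree}. The whole proof therefore reduces to the upper bound $\dL(S)\le n:=\atrdeg_D(R)$, and we obtain it by a growth comparison modelled on the proof of Theorem~\ref{thm:growth}.

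\textbf{Setup.} Fix an automorphic presentation of $R$ with degree filtration $F_qR$ and Hilbert function $h_R$. By Theorem~\ref{thm:growth}, $h_R(q)\le C(q+1)^n$. Choose left $R$-module generators $1=w_1,\ldots,w_k$ of $S$. Adding $1$ is harmless, and it is the natural move for a finite extension. Define
\[
 G_qS=\sum_{i=1}^k F_qR\,w_i.
\]
Since $F_qR$ is spanned over $D$ by finitely many monomials, we get $\ldim{G_qS}\le k\,h_R(q)\le kC(q+1)^n$. The filtration $G$ is exhaustive.

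\textbf{Main obstacle: approximate multiplicativity of $G$.} We need $G_pS\,G_qS\subseteq G_{p+q+e}S$ for a fixed $e$, or at least a linear bound $G_{p+q+e}$ with constants. Two facts are needed.
\begin{itemize}
\item[(a)] Each product $w_iw_j$ lies in $G_eS$ for some $e$, since $w_iw_j=\sum_l r_{ijl}w_l$.
\item[(b)] Each $w_i$ can be moved past $R$ with bounded degree increase: $w_iF_qR\subseteq G_{q+e}S$, or at least $G_{aq+e}S$ for constants $a,e$.
\end{itemize}
Fact (b) is the delicate point, because $S$ is only a left $R$-module. We argue by induction on $q$. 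The products $w_id$ and $w_i\bar x_j$ are fixed elements of $S$, but $w_id$ for $d\in D$ ranges over infinitely many elements. We therefore expect to need that $w_iD\subseteq G_eS$ for a uniform $e$. If that fails, we restrict to what the argument actually uses.

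\textbf{Fallback.} Given a commuting left-independent tuple $t_1,\ldots,t_r$ in $S$, take the finite set $W=\{1,w_i,t_j\}$ and the left $D$-spans of products. We must show that the monomials $t^\alpha$ with $|\alpha|\le q$ lie in a left $D$-space of dimension $O(q^n)$. The cleanest route is to write $t_j=\sum_i \rho_{ji}w_i$ with $\rho_{ji}\in R$, and to use that $w_i$ times an element of $F_qR$ lies in $\sum_l F_{q+e}R\,w_l$. This holds once we know $w_i\bar x_j$ and $w_i D$ lie in $G_eS$. For $w_iD$ we use that $S$ is finite over $R$: the left $R$-module $\sum_i R\,w_iD$ is contained in $S=\sum_l Rw_l$, and we bound the $F$-degree of the coefficients $r$ in $w_id=\sum_l r_l w_l$. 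If a uniform bound is not available, we instead use that the monomials $t^\alpha$ lie in the $R$-span of products of the $w$'s. We then count only left $D$-dimension along the $R$-coordinate. Concretely, we consider the left $R$-linear surjection $R^k\to S$ and lift the left $D$-independent family $\{t^\alpha:|\alpha|\le q\}$ to elements of $R^k$ with $F$-degree $\le cq$. This gives $\binom{q+r}{r}\le k\,h_R(cq)=O(q^n)$, and hence $r\le n$.

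The degree control of these lifts is exactly fact (b), and it is the main obstacle. Once (b) is established, the counting concludes as in Theorem~\ref{thm:growth}: $\dL(S)\le n$, which closes the chain and yields \eqref{eq:finite-invariance}.
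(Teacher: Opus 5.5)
\textbf{There is a genuine gap.} Your reduction to $\dL(S)\le n$ is right, and so is the filtration $G_qS=\sum_iF_qR\,w_i$ with $\ldim{G_qS}\le k\,h_R(q)$. But the proof is explicitly conditional on your fact (b), which you never establish, and it cannot be established from the hypotheses.

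Already the case $q=0$ of (b) fails in general. It asks for a uniform $e$ with $w_iD\subseteq G_eS$ for all $i$. That would give $sD\subseteq G_{p+e}S$ for every $s\in\sum_iF_pR\,w_i$, hence $\ldim{DsD}<\infty$ for all $s\in S$. This is exactly the local finiteness hypothesis the paper has to add in Theorem~\ref{thm:gk-finite}, and it is not part of the present statement. It genuinely fails, as the following example shows:
\begin{itemize}
\item Take $D=\Q(t_1,t_2,\ldots)$ and $R=D[x]$ with $x$ central.
\item Let $M=R^2$ (row vectors) with scalar left action and right action $m\cdot r=m\psi(r)$. Here $\psi:R\to M_2(R)$ is the ring homomorphism with $\psi(x)=xI$ and $\psi(f)=fI+\bigl(\sum_jx^j\,\partial f/\partial t_j\bigr)E_{12}$ for $f\in D$.
\item Let $S=R\oplus M$ with $M^2=0$.
\end{itemize}
Then $S=R+Re_1+Re_2$ is finite over $R$, yet $e_1t_j=t_je_1+x^je_2$. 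So $\ldim{De_1D}=\infty$, and (b) fails for every choice of generators. Your plan to bound the $F$-degree of the coefficients in $w_id=\sum_lr_lw_l$ therefore cannot succeed. The approximate multiplicativity $G_pG_q\subseteq G_{p+q+e}$ is unavailable for the same reason.

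\textbf{The missing idea.} You never need to move a generator past $R$ or $D$. You only need to move it past the finitely many elements $t_1,\ldots,t_r$ of the tuple.
\begin{itemize}
\item Write $w_it_j=\sum_lr_{ijl}w_l$ with $r_{ijl}\in R$, and choose $c$ with all $r_{ijl}\in F_cR$.
\item For $g=\sum_if_iw_i\in G_qS$, associativity gives $gt_j=\sum_l\bigl(\sum_if_ir_{ijl}\bigr)w_l\in G_{q+c}S$. This uses only $F_qR\,F_cR\subseteq F_{q+c}R$ inside $R$.
\item Start from $1=w_1\in G_0S$ and right-multiply by one $t_j$ at a time. This gives $t^\alpha\in G_{c|\alpha|}S$.
\end{itemize}
This is exactly the degree-controlled lift to $R^k$ you wanted in your fallback, obtained without (b). Then
\[
\binom{q+r}{r}\le\ldim{G_{cq}S}\le k\,h_R(cq)\le kC(cq+1)^n,
\]
which forces $r\le n$. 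This is the paper's argument.
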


\begin{proof}
Put $n=\atrdeg_D(R)$ and choose a degree filtration on $R$ with
$h_R(q)\leq C(q+1)^n$. Monotonicity gives
$n\leq\atrdeg_D(S)\leq\dL(S)$.
Let $b_1,\ldots,b_m\in S$ be any commuting tuple that  is left
algebraically independent over $D$. Choose left $R$-module generators
$v_1,\ldots,v_t$ for $S$ with $v_1=1$. Express
\[
 v_i b_j=\sum_{\ell=1}^t r_{ij\ell}v_\ell,
 \qquad r_{ij\ell}\in R.
\]
Choose $c\geq1$ such that all $r_{ij\ell}$ lie in $F_cR$, and put
$G_q=\sum_{i=1}^t(F_qR)v_i$. Associativity gives
\[
 G_qb_j\subseteq G_{q+c}.
\]
Notice that this uses right multiplication on a left module and
requires no commutation between $b_j$ and $R$.
Starting at $1\in G_0$, every $b$-monomial of total degree at most
$q$ belongs to $G_{cq}$. Hence
\[
 \binom{q+m}{m}\leq\ldim{G_{cq}}
 \leq t\,h_R(cq)\leq tC(cq+1)^n.
\]
It follows that $m\leq n$, proving \eqref{eq:finite-invariance}.
\end{proof}

\begin{theorem}[Number of normalization variables]\label{cor:normalization-number}
If an extension $S/D$ admits a skew normalization subring
$B=D[s_1,\ldots,s_n]$, then
\[
 \atrdeg_D(S)=\dL(S)=n.
\]
In particular, any two skew normalization subrings of $S/D$
have the same number of variables.
\end{theorem}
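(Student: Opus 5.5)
The plan is to apply Theorem~\ref{thm:finite} with $R=B$. By Definition~\ref{def:normalization}, $B=D[s_1,\ldots,s_n]$ is generated by an automorphically independent tuple, and $S$ is finite as a left $B$-module. So the whole task is to check that $B/D$ is automorphically finitely generated, compute $\atrdeg_D(B)=n$, and then quote the invariance theorem.

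\textbf{Step 1: $B$ is a compatible Ore quotient.} Lemma~\ref{lem:subring} gives an isomorphism fixing $D$,
\[
 D[T_1,\ldots,T_n;(\sigma_1,\delta_1),\ldots,(\sigma_n,\delta_n)]\xrightarrow{\ \sim\ }B,
\]
where the $(\sigma_i,\delta_i)$ are the associated pairs of the $s_i$. These pairs are compatible by Lemma~\ref{lem:triangular}(ii). Hence $B$ is a compatible Ore quotient as in Definition~\ref{def:afg}, with the zero ideal $I=0$. The degenerate case $n=0$ gives $B=D$, presented with $m=0$ variables.

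\textbf{Step 2: compute the degree of $B$.} With $I=0$ we have $\Lambda=\varnothing$ and $\Delta=\N^n$, so $d_\Delta=n$. Theorem~\ref{thm:growth} then gives $\atrdeg_D(B)=\dL(B)=n$.

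\textbf{Step 3: transfer to $S$.} Since $D\subseteq B\subseteq S$ and $S$ is finitely generated as a left $B$-module, Theorem~\ref{thm:finite} gives
\[
 \atrdeg_D(S)=\dL(S)=\atrdeg_D(B)=n.
\]
The final assertion follows at once. Any two skew normalization subrings $D[s_1,\ldots,s_n]$ and $D[t_1,\ldots,t_k]$ of $S$ both satisfy $n=\atrdeg_D(S)=k$.

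The only step that needs care is Step 1, namely that a normalization subring is itself automorphically finitely generated. This rests on two facts. First, the associated pairs of the $s_i$ are unique, because each $s_i\notin D$; this holds since the monomials $1,s_i$ are independent. Second, those pairs are compatible, so that Proposition~\ref{prop:construction} applies. Both facts are supplied by Lemmas~\ref{lem:compatibility}, \ref{lem:triangular} and \ref{lem:subring}, so the remainder of the argument is bookkeeping.
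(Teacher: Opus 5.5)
Your proposal is correct and follows the paper's proof. The paper likewise uses Lemma~\ref{lem:subring} to see that $B$ is an automorphically finitely generated extension with $\atrdeg_D(B)=n$, and then applies Theorem~\ref{thm:finite} to $D\subseteq B\subseteq S$. Your Step~2, which computes $\atrdeg_D(B)=n$ from $\Delta=\N^n$ via Theorem~\ref{thm:growth}, only makes explicit a step the paper leaves implicit.
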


\begin{proof}
By Lemma~\ref{lem:subring}, $B$ is an automorphically finitely
generated extension with $\atrdeg_D(B)=n$. Apply
Theorem~\ref{thm:finite} to $D\subseteq B\subseteq S$.
\end{proof}

\begin{corollary}[Diagonal matrix stabilization]\label{cor:matrix}
If $R/D$ is automorphically finitely generated and $r\geq1$, then
\[
 \atrdeg_{D_{\mathrm{diag}}}(M_r(R))=\atrdeg_D(R),
\]
where $D_{\mathrm{diag}}=\{dI_r:d\in D\}$.
\end{corollary}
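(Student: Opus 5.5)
The plan is to realize $M_r(R)$ as a finite left module over a subring that is automorphically finitely generated over $D_{\mathrm{diag}}$, and then apply Theorem~\ref{thm:finite}. Let $R_{\mathrm{diag}}=\{aI_r:a\in R\}$, a subring of $M_r(R)$ isomorphic to $R$ by $a\mapsto aI_r$. This isomorphism carries $D$ onto $D_{\mathrm{diag}}$. We also have
\[
D_{\mathrm{diag}}\subseteq R_{\mathrm{diag}}\subseteq M_r(R).
\]

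\textbf{Step 1: $R_{\mathrm{diag}}/D_{\mathrm{diag}}$ is automorphically finitely generated.} Transport a presentation $R=P/I$ through the isomorphism. Identify $D_{\mathrm{diag}}$ with $D$ and let each $\sigma_i,\delta_i$ act entrywise on scalar matrices. Then $P$ maps onto $R_{\mathrm{diag}}$ with kernel $I$, so $R_{\mathrm{diag}}\cong P/I$ as an extension of $D_{\mathrm{diag}}\cong D$. Because $\atrdeg$ is preserved by isomorphisms fixing the base, this also gives
\[
\atrdeg_{D_{\mathrm{diag}}}(R_{\mathrm{diag}})=\atrdeg_D(R).
\]

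\textbf{Step 2: $M_r(R)$ is a finite left $R_{\mathrm{diag}}$-module.} The $r^2$ matrix units $E_{k\ell}$ generate it, since every matrix decomposes as
\[
(a_{k\ell})=\sum_{k,\ell}(a_{k\ell}I_r)E_{k\ell}.
\]

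\textbf{Step 3: conclude.} Theorem~\ref{thm:finite}, applied to $D_{\mathrm{diag}}\subseteq R_{\mathrm{diag}}\subseteq M_r(R)$, now gives $\atrdeg_{D_{\mathrm{diag}}}(M_r(R))=\atrdeg_{D_{\mathrm{diag}}}(R_{\mathrm{diag}})$. Combined with Step~1, this is the claimed equality. The theorem requires no finite-generation hypothesis on the top ring, so $M_r(R)$ needs nothing further.

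\textbf{Expected obstacle.} The only subtle point is Step~1: the base ring of the extension has changed from $D$ to $D_{\mathrm{diag}}$, and one must check that an isomorphism of pairs $(R,D)\cong(R_{\mathrm{diag}},D_{\mathrm{diag}})$ transports every ingredient in the definitions. These ingredients are the automorphic condition, the associated pairs, left linear independence over the base, and the compatible Ore presentation. Since each is stated purely in terms of the ring and its distinguished division subring, this is a formal verification and not a genuine difficulty.
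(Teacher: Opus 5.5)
Your proposal is correct and follows the paper's own proof: you realize $R_{\mathrm{diag}}$ as an automorphically finitely generated extension of $D_{\mathrm{diag}}$, observe that $M_r(R)$ is a finite left $R_{\mathrm{diag}}$-module, and apply Theorem~\ref{thm:finite}. The only difference is that you spell out the two supporting facts, namely the transported presentation and the generation of $M_r(R)$ by the matrix units $E_{k\ell}$, which the paper asserts without detail.
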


{\begin{proof}
The scalar-matrix copy $R_{\mathrm{diag}}$ of $R$ is
an automorphically finitely generated extension of
$D_{\mathrm{diag}}$. The ring $M_r(R)$ is a left finite 
$R_{\mathrm{diag}}$-module. Apply Theorem~\ref{thm:finite}.
\end{proof}}
\begin{theorem}[Integral-extension invariance]\label{thm:integral}
Let $D\subseteq R\subseteq S$, with $R/D$ automorphically finitely
generated. If every element of $S$ is left integral over $R$, then
\[
 \atrdeg_D(S)=\dL(S)=\atrdeg_D(R).
\]
In particular, if $a_1,\ldots,a_n$ are automorphically independent
and $S$ is left integral over $D[a_1,\ldots,a_n]$, then
$\atrdeg_D(S)=n$.
\end{theorem}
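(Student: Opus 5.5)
Our plan is to mimic the proof of Theorem~\ref{thm:finite}: we bound the growth of any commuting left independent tuple $b_1,\ldots,b_k$ in $S$ by a polynomial of degree $n=\atrdeg_D(R)$. The inequality $n\le\atrdeg_D(S)\le\dL(S)$ follows from monotonicity, so only $k\le n$ needs proof. The difference from the finite case is that $S$ need not be a finitely generated left $R$-module. We therefore work inside the subring generated by $R$ and the finitely many elements $b_1,\ldots,b_k$ and show that this subring is finite over $R$.

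The key step is as follows. Each $b_j$ is left integral over $R$, say $b_j^{N_j}=\sum_{i<N_j} r_{ji}b_j^i$. We first try $M=\sum_{\alpha_j<N_j} R\,b^\alpha$, the left $R$-span of the monomials $b_1^{\alpha_1}\cdots b_k^{\alpha_k}$ with $\alpha_j<N_j$. We want $M\,b_j\subseteq M$, which would make $M$ a finitely generated left $R$-module containing $1$ and stable under right multiplication by every $b_j$. Because the $b_j$ commute, $b^\alpha b_j$ is again a monomial with $\alpha_j$ increased by one. When the new exponent reaches $N_j$ we must rewrite $b_j^{N_j}$ through the integral relation. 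The difficulty is that the coefficients $r_{ji}$ land between $b_1^{\alpha_1}\cdots$ and $b_j^{i}$, so we must move elements of $R$ leftward past $b$'s. No commutation between $b_j$ and $R$ is available, so this is the main obstacle.

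We would handle it this way. Order the product with $b_j$ placed first: since the $b$'s commute, $b^\alpha b_j=b_j^{\alpha_j+1}\prod_{l\ne j}b_l^{\alpha_l}$. Applying the relation for $b_j^{\alpha_j+1}$ then puts the coefficients on the far left, which is allowed. We get $b_j^{N_j}\prod_{l\neq j}b_l^{\alpha_l}=\sum_i r_{ji}\,b_j^{i}\prod_{l\ne j}b_l^{\alpha_l}$, and each term is an $R$-multiple of a reduced monomial. This shows $M b_j\subseteq M$, so every $b$-monomial lies in $M$. Filtering, let $G_q=\sum_{\alpha_j<N_j}(F_qR)b^\alpha$, with $t=\prod N_j$ generators. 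Choosing $c$ with all $r_{ji}\in F_cR$, the same computation gives $G_q b_j\subseteq G_{q+c}$, because the rewriting multiplies by one coefficient $r_{ji}$ on the left and uses $F_qR\,F_cR\subseteq F_{q+c}R$. Hence every $b$-monomial of degree at most $q$ lies in $G_{cq}$, and we obtain $\binom{q+k}{k}\le t\,h_R(cq)\le tC(cq+1)^n$, which forces $k\le n$.

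The ``in particular'' clause follows by applying this with $R=D[a_1,\ldots,a_n]$. By Lemma~\ref{lem:subring}, $R$ is automorphically finitely generated, and Theorem~\ref{thm:growth} gives $\atrdeg_D(R)=n$. The whole argument needs integrality only of the finitely many $b_j$, not of all of $S$.
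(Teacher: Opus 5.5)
Your proposal is correct and follows essentially the paper's argument. You use commutativity of the $b_j$ to place $b_j^{N_j}$ at the far left, so the monic relation leaves every coefficient on the left. This bounds each $b$-monomial of degree at most $q$ inside the left span of the $\prod N_j$ reduced monomials, with coefficients in a filtration piece of degree linear in $q$, and you then compare $\binom{q+k}{k}$ with $t\,h_R(cq)$; your inclusion $G_qb_j\subseteq G_{q+c}$ just repackages the paper's iterated reduction.

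One small correction to your framing: you never show that the subring generated by $R$ and the $b_j$ is finite over $R$, and you do not need to. $M$ is stable under right multiplication by the $b_j$ but need not be stable under right multiplication by $R$, and containment of the $b$-monomials in $M$ is all the counting uses.
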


\begin{proof}
Set $n=\atrdeg_D(R)$ and fix $h_R(q)\leq C(q+1)^n$ as above.
It suffices to bound the size $m$ of an arbitrary commuting tuple
$b_1,\ldots,b_m$ with left $D$-linearly independent monomials.
For each $j$, choose a monic relation
\begin{equation}\label{eq:monic-reductions}
 b_j^{d_j}+\sum_{k=0}^{d_j-1}c_{j,k}b_j^k=0,
 \qquad d_j\geq1,\quad c_{j,k}\in R.
\end{equation}
Choose $a\geq1$ with all $c_{j,k}\in F_aR$.
We claim that every monomial $b^\beta$ with $|\beta|\leq q$ belongs
to
\begin{equation}\label{eq:integral-bounded-space}
 \sum_{0\leq\gamma_j<d_j}(F_{aq}R)b^\gamma.
\end{equation}
If some $\beta_j\geq d_j$, commutativity among the $b_i$ permits us
to write $b^\beta=b_j^{d_j}v$. Substitution of
\eqref{eq:monic-reductions} lowers the total $b$-degree by at least
one, leaving every new coefficient on the left. Repeat inside each
remaining $b$-monomial. Along any branch there are at most $q$
reductions; the coefficients are products of at most $q$ elements of
$F_aR$ and hence lie in $F_{aq}R$. This proves
\eqref{eq:integral-bounded-space}. No coefficient is commuted past a
$b_i$ in this argument.

There are $d_1\cdots d_m$ bounded monomials in
\eqref{eq:integral-bounded-space}. Consequently
\[
 \binom{q+m}{m}\leq (d_1\cdots d_m)\,h_R(aq)
 \leq(d_1\cdots d_m)C(aq+1)^n.
\]
Thus $m\leq n$. Monotonicity and Theorem~\ref{thm:growth} for $R$
complete the proof.
\end{proof}

\begin{remark}\label{rem:finite-implies-integral}
Since $R$ is left noetherian, the left-finite hypothesis in
Theorem~\ref{thm:finite} implies the integral hypothesis in
Theorem~\ref{thm:integral}. Indeed, ${}_RS$ is noetherian, so for
$u\in S$ the ascending chain
\[
 R\subseteq R+Ru\subseteq R+Ru+Ru^2\subseteq\cdots
\]
stabilizes and gives a monic left relation for $u$.
The direct proof of Theorem~\ref{thm:finite} is retained because it
gives a transparent uniform growth bound from module generators
without invoking a noncommutative characteristic polynomial.
\end{remark}
\subsection{Gelfand-Kirillov dimension under a finiteness condition}

Lezama and Latorre define a generalized Gelfand--Kirillov dimension
for finitely semi-graded rings over a left noetherian degree-zero
domain, using Goldie dimension
\cite[Definition~4.1 and Proposition~4.3]{Lezama2017}. Over a division
ring, Goldie dimension agrees with vector-space dimension whenever
it is finite. Lezama and Venegas treat a related rank-based invariant
for algebras over commutative domains \cite{Lezama2020}. For an
arbitrary, possibly noncentral division subring, we use the following
explicit relative convention and prove the needed comparison directly.

\begin{definition}[Relative frame-based dimension]\label{def:relative-gk}
Let $D\subseteq B$ be a ring extension. A \emph{left $D$-frame} of
$B$ is a finite-dimensional left $D$-subspace $V\subseteq B$
containing $1$. For $q\geq1$, set
\[
 V^q=\Span_D\{v_1\cdots v_q:v_i\in V\}.
\]
The \emph{left Gelfand--Kirillov dimension} is
\begin{equation}\label{eq:relative-gk-definition}
 \GKdim_D(B)=\sup_V\limsup_{q\to\infty}
                 \frac{\log\ldim{V^q}}{\log q}.
\end{equation}
Here any infinite vector-space dimension is assigned the value
$+\infty$ in the growth expression. A frame is \emph{generating} if
$B$ is the subring generated by $D$ and $V$.
\end{definition}
\begin{theorem}[Generating-frame comparison]\label{thm:generating}
If $V$ is a generating left $D$-frame of $B$, then
\[
 \GKdim_D(B)=\limsup_{q\to\infty}
                  \frac{\log\ldim{V^q}}{\log q}.
\]
\end{theorem}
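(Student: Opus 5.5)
The plan is the classical argument that the growth exponent of a generating frame dominates that of every other frame, carried out carefully for a possibly noncentral $D$. Write $\gamma(W)=\limsup_{q\to\infty}\log\ldim{W^q}/\log q$ for a left $D$-frame $W$. The inequality $\gamma(V)\le\GKdim_D(B)$ is immediate from \eqref{eq:relative-gk-definition}, since $V$ is one of the frames in the supremum. So everything reduces to proving $\gamma(W)\le\gamma(V)$ for an arbitrary left $D$-frame $W$.

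\textbf{Step 1: $B=\bigcup_{q\ge1}V^q$.} First I would record two facts. Since $1\in V$, the chain $V^1\subseteq V^2\subseteq\cdots$ is increasing. By the definition of $V^q$ as a left $D$-span of words, $V^pV^q\subseteq V^{p+q}$ holds as soon as right multiplication by scalars is absorbed, that is, as soon as $vd\in V^{k}$ for $v\in V$, $d\in D$ and some fixed $k$. Granting such a $k$, an induction moves every scalar of a word $d_0v_1d_1v_2\cdots v_nd_n$ to the far left at the cost of a bounded increase in word length. Hence $\bigcup_qV^q$ is a subring containing $D$ and $V$, and so it equals $B$.

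\textbf{Step 2: $W\subseteq V^k$ for some $k$.} Pick a left $D$-basis $w_1,\dots,w_r$ of $W$. By Step~1 each $w_i$ lies in some $V^{k_i}$, so $W\subseteq V^k$ with $k=\max_ik_i$.

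\textbf{Step 3: the growth estimate.} From Step~2, $W^q\subseteq (V^k)^q\subseteq V^{c kq}$ for a constant $c$ coming from the scalar-absorption in Step~1. This gives $\ldim{W^q}\le\ldim{V^{ckq}}$. Because $\log(ckq)/\log q\to1$, we obtain $\gamma(W)\le\gamma(V)$. If $\ldim{V^{ckq}}=\infty$, both sides are $+\infty$, consistent with the convention of Definition~\ref{def:relative-gk}. Taking the supremum over $W$ finishes the proof.

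The main obstacle is the scalar-absorption used in Step~1, namely controlling $VD$ inside some $V^k$. Since $V$ is only a left $D$-subspace and $D$ need not be central, this containment is not automatic, and without it $\bigcup_qV^q$ need not be closed under multiplication.

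My first approach is to enlarge the frame. I would replace $V$ by a frame $V'$ with $V\subseteq V'\subseteq V^{j}$ for some $j$ such that $V'D\subseteq DV'$. Such a $V'$ exists in the situations of interest: for instance, when $V$ is spanned by $1$ and automorphic elements, Lemma~\ref{lem:triangular} gives $vd=\sigma(d)v+\delta(d)$. More generally it exists under the local finiteness hypothesis $\ldim{DsD}<\infty$, where one can take $V'=\sum_iDv_iD$. Since $V\subseteq V'\subseteq V^{j}$, the frames $V$ and $V'$ have the same growth exponent, and for $V'$ the argument above applies with $c=1$.

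If no such $V'$ can be found, I would instead read ``generating'' as the condition $B=\bigcup_qV^q$, which is exactly what Step~2 uses. Under that reading, only Step~3 needs the bound $(V^k)^q\subseteq V^{kq}$, and I would check that bound directly from the definition of $V^q$ as spans of words.
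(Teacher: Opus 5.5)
You have the same outline as the paper: show $B=\bigcup_qV^q$, place a basis of $W$ inside some $V^k$, then compare $W^q$ with $V^{kq}$. As written, though, the proposal has a gap exactly at the step you call the main obstacle. You make Step~1 conditional on an absorption $VD\subseteq V^k$ and assert that this is ``not automatic''. Neither of your two fallbacks proves the theorem as stated:
\begin{itemize}
\item Fallback~(a) needs hypotheses the theorem does not make, namely automorphic spanning elements or $\ldim{DsD}<\infty$.
\item Fallback~(b) replaces the hypothesis that $B$ is the subring generated by $D$ and $V$ with the conclusion $B=\bigcup_qV^q$ that Step~1 is meant to establish. The bound $(V^k)^q\subseteq V^{kq}$ you would then ``check directly'' is the same absorption question again.
\end{itemize}

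The missing observation is that there is no obstacle. Because $1\in V$ and $V$ is a left $D$-subspace, $D=D\cdot1\subseteq V$ and $DV\subseteq V$. So in a product of two spanned words, an intervening scalar is absorbed into the next factor on the right:
\[
 (d\,v_1\cdots v_p)(e\,w_1\cdots w_q)=d\,v_1\cdots v_p\,(ew_1)\,w_2\cdots w_q,\qquad ew_1\in V.
\]
This gives $V^pV^q\subseteq V^{p+q}$ with no increase in length. Your constant is $c=1$, and moving scalars to the far left is the wrong direction.

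Moreover $D\cup V=V$, so every product of $n$ elements of $D\cup V$ already lies in $V^n$. In particular $vd\in V\cdot V\subseteq V^2$ for $v\in V$, $d\in D$, so the containment you doubted always holds with $k=2$. Hence $\bigcup_qV^q$ is exactly the subring generated by $D$ and $V$, and your Steps~2 and~3 then go through verbatim.

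Fallback~(a) silently relies on this same fact, since $V'=\sum_iDv_iD\subseteq V^2$ holds precisely because $D\subseteq V$.
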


\begin{proof}
We follow the proof of \cite[Proposition 4.3]{Lezama2017}. Since $1\in V$, one has $D\subseteq V$ and $V^q\subseteq V^{q+1}$.
For positive integers $p,q$,
\[
 V^pV^q\subseteq V^{p+q}.
\]
To see this, expand the left coefficients in a product of two
spanned words and absorb an intervening coefficient from $D$ into
the first factor of the second word, which is still in $V$.
Consequently, $\bigcup_{q\geq1}V^q$ is the subring generated by $V$,
and hence equals $B$.

If some $V^q$ is infinite-dimensional, both sides of the desired
equality are $+\infty$. Otherwise, for any left $D$-frame $W$, choose
$c\geq1$ such that a finite left $D$-basis of $W$ lies in $V^c$.
Then $W\subseteq V^c$ and $W^q\subseteq V^{cq}$. Hence
\[
 \limsup_{q\to\infty}\frac{\log\ldim{W^q}}{\log q}
 \leq\limsup_{q\to\infty}
       \frac{\log(cq)}{\log q}
       \frac{\log\ldim{V^{cq}}}{\log(cq)}
 \leq\limsup_{n\to\infty}\frac{\log\ldim{V^n}}{\log n}.
\]
Taking the supremum over $W$ proves one inequality; the reverse
inequality follows by using $V$ itself as a frame.
\end{proof}

\begin{remark}\label{rem:gk-lower-independent}
For any commuting tuple $s_1,\ldots,s_r$ with left $D$-linearly
independent indexed monomials, the space
\[
 V=D+Ds_1+\cdots+Ds_r
\]
is a frame and
\[
 \ldim{V^q}\geq\binom{q+r}{r},\qquad
 \lim_{q\to\infty}\frac{\log\binom{q+r}{r}}{\log q}=r.
\]
Taking the supremum over all such finite tuples, including when
$\dL(S)=\infty$, gives
\begin{equation}\label{ine:GK1}
 \atrdeg_D(S)\leq\dL(S)\leq\GKdim_D(S).
\end{equation}
\end{remark}

\begin{theorem}[Finite extensions and Gelfand--Kirillov dimension]
\label{thm:gk-finite}
Let $D\subseteq R\subseteq S$, where $R/D$ is automorphically
finitely generated and $S$ is finite as a left $R$-module. Assume, in addition, that $S$ is locally finite as a $D$-bimodule,
in the following left-dimensional sense:
\begin{equation}\label{eq:gk-local-finiteness}
 \ldim{DsD}<\infty\qquad\text{for every }s\in S.
\end{equation}
Here $DsD$ denotes the additive span of the elements $asb$, with
$a,b\in D$. Then every power of every left $D$-frame in $S$ is
finite-dimensional, and
\[
 \GKdim_D(S)=\GKdim_D(R)=\atrdeg_D(R)=\atrdeg_D(S).
\]
\end{theorem}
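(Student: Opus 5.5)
Since $\atrdeg_D(S)=\atrdeg_D(R)$ is already Theorem~\ref{thm:finite}, the task reduces to two things: finiteness of every power of every frame in $S$, and the upper bound $\GKdim_D(S)\le\atrdeg_D(R)$. The lower bounds are already available. Remark~\ref{rem:gk-lower-independent} gives $\atrdeg_D(S)\le\GKdim_D(S)$. For $R$, the frame $V=F_1R$ generates $R$, so Theorem~\ref{thm:generating} together with $V^q=F_qR$ and Theorem~\ref{thm:growth} gives $\GKdim_D(R)=\atrdeg_D(R)$. One small point must be checked: $V^q=F_qR$ holds because $F_1R=D+\sum D\bx_i$ and the products of coordinates with left coefficients span $F_qR$ by Lemma~\ref{lem:triangular}.

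For the upper bound I would reuse the module-generator trick from Theorem~\ref{thm:finite}. Choose left $R$-generators $v_1=1,\dots,v_t$ of $S$ and set $G_q=\sum_i (F_qR)v_i$. We have $\ldim{G_q}\le t\,h_R(q)\le tC(q+1)^n$ with $n=\atrdeg_D(R)$. Let $W$ be a left $D$-frame with left basis $w_1=1,\dots,w_k$. The goal is a constant $c$ with $G_qW\subseteq G_{q+c}$. Then $W^q\subseteq G_{cq}$ follows by induction, using $W^{q+1}=\Span_D(W^q W)$ and the fact that $G_{cq}$ is a left $D$-space. This gives both finite dimensionality of $W^q$ and $\limsup\log\ldim{W^q}/\log q\le n$.

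The key step, and the main obstacle, is proving $G_qW\subseteq G_{q+c}$ when $W$ is only a left $D$-space. We need $(F_qR)v_i\,d\,w_j\subseteq G_{q+c}$ for all $d\in D$. Since $W$ is not a right $D$-space, $dw_j$ need not lie in $W$, and the set of $d$ is infinite. This is exactly where \eqref{eq:gk-local-finiteness} enters. For each $i,j$, the space $Dv_iDw_jD\subseteq D(v_iDw_j)D$ is contained in $\Span_D$ of finitely many elements. Indeed, $Dv_iD$ has a finite left basis $e_1,\dots,e_p$, and each $e_\ell w_j$ has $\ldim{De_\ell w_jD}<\infty$. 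Hence $v_i D w_j\subseteq \sum_\ell D\,e_\ell w_j D$, which is finite-dimensional on the left. Fix a finite left basis $u_1,\dots,u_N$ of the finite-dimensional left space $\sum_{i,j}\sum_{\ell}D e_\ell w_jD$. Write each $u_r=\sum_s r_{rs}v_s$ with $r_{rs}\in R$, and let $c$ be such that all $r_{rs}\in F_cR$.

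With this choice, any $f\in F_qR$ satisfies $f v_i d w_j\in f\cdot\Span_D\{u_r\}=\sum_r (F_qR)D\,u_r\subseteq\sum_{r,s}(F_qR)r_{rs}v_s\subseteq G_{q+c}$. Here we use $F_qR\,D=F_qR$ and the multiplicativity \eqref{inclusion1}. This yields $G_qW\subseteq G_{q+c}$, and the rest is the bookkeeping above. Combining everything gives $\atrdeg_D(S)\le\GKdim_D(S)\le\atrdeg_D(R)=\GKdim_D(R)$, and Theorem~\ref{thm:finite} closes the chain of equalities.
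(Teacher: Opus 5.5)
Your proof is correct and is essentially the paper's argument: you build the module filtration $G_q$ from left $R$-generators of $S$, use local finiteness of $Dv_iD$ to control right multiplication by $D$, and iterate $G_qW\subseteq G_{q+c}$ to get $\ldim{W^q}\le t\,h_R(cq)$, with the lower bounds coming from Remark~\ref{rem:gk-lower-independent} and the frame $F_1R$. The only difference is bookkeeping: the paper replaces the $v_i$ by a left $D$-basis of $U=\sum_i Dv_iD$ so that $G_qD\subseteq G_q$, whereas you keep the $v_i$ and place $v_iDw_j$ inside the finitely generated left space $\sum_\ell De_\ell w_j$, so your second appeal to local finiteness (for the elements $e_\ell w_j$) is not actually needed.
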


\begin{proof}
Put $n=\atrdeg_D(R)$ and fix an automorphic degree filtration $F$ on
$R$. By Theorem~\ref{thm:growth}, $n$ is finite and
\[
 h_R(q)=\ldim{F_qR}=\Theta((q+1)^n).
\]
In particular, there is a constant $C>0$ such that
\begin{equation}\label{eq:gk-base-growth-bound}
 h_R(q)\leq C(q+1)^n\qquad(q\geq0).
\end{equation}

Let $W=F_1R$. Multiplicativity of $F$ gives $W^q\subseteq F_qR$.
Conversely, every coordinate monomial of degree at most $q$ is a
product of at most $q$ elements of $W$, and the remaining factors
can be chosen to be $1$. Taking left $D$-spans gives
\[
 W^q=F_qR\qquad(q\geq1).
\]
Consequently,
\[
 \lim_{q\to\infty}\frac{\log\ldim{W^q}}{\log q}=n.
\]

Since $W$ is a frame of both $R$ and $S$, it follows that
\begin{equation}\label{eq:gk-lower-bound}
 n\leq\GKdim_D(R)\leq\GKdim_D(S).
\end{equation}
Choose left $R$-module generators $v_1,\ldots,v_m$ for $S$, with
$v_1=1$, and set
\[
 U=\sum_{i=1}^m Dv_iD.
\]
By \eqref{eq:gk-local-finiteness}, $U$ is finite-dimensional on the
left over $D$. Moreover,
\[
 1\in U,\qquad UD\subseteq U,\qquad RU=S.
\]
Choose a left $D$-basis $u_1,\ldots,u_t$ of $U$, and define
\[
 G_q=\sum_{i=1}^t(F_qR)u_i\qquad(q\geq0).
\]
Then
\begin{equation}\label{eq:gk-module-filtration}
 1\in G_0,\qquad G_qD\subseteq G_q,\qquad
 \ldim{G_q}\leq t\,h_R(q).
\end{equation}
The right $D$-stability follows from $UD\subseteq U$.

Let $V$ be an arbitrary left $D$-frame of $S$, and choose left
$D$-generators $a_1,\ldots,a_\ell$ for $V$.
For each $i,j$, choose coefficients in $R$ such that
\[
 u_i a_j=\sum_{k=1}^t r_{ijk}u_k,\quad r_{ijk}\in R.
\]
There are only finitely many coefficients $r_{ijk}$, so choose an
integer $c\geq1$ such that every $r_{ijk}\in F_cR$. Thus $V\subseteq G_c$.
Multiplicativity of the filtration on $R$ gives
\[
 G_q a_j\subseteq G_{q+c}\qquad(q\geq0).
\]
If $a=\sum_j d_j a_j\in V$, with $d_j\in D$, then
\[
 G_q a\subseteq\sum_j(G_qd_j)a_j
          \subseteq G_{q+c},
\]
where the second inclusion uses $G_qD\subseteq G_q$.
Thus $G_qV\subseteq G_{q+c}$ for every $q\geq0$.
Starting with $1\in G_0$, induction yields
\[
 V^q\subseteq G_{cq}\qquad(q\geq1).
\]
It follows from \eqref{eq:gk-base-growth-bound} and
\eqref{eq:gk-module-filtration} that
\[
 \ldim{V^q}\leq\ldim{G_{cq}}
       \leq t\,h_R(cq)
       \leq tC(cq+1)^n.
\]
In particular, $V^q$ is finite-dimensional, and
\[
 \limsup_{q\to\infty}
 \frac{\log\ldim{V^q}}{\log q}\leq n.
\]
Taking the supremum over all frames $V$ gives
$\GKdim_D(S)\leq n$. Together with \eqref{eq:gk-lower-bound} and (\ref{ine:GK1}), this
proves
\[
 \GKdim_D(S)=\GKdim_D(R)=\atrdeg_D(R)=\atrdeg_D(S).
\]
\end{proof}

\begin{remark}[The role of local finiteness]
\label{rem:gk-local-finiteness}
The additional hypothesis is necessary for the stated frame-based
invariant to be finite. Indeed, for any $s\in S$, the space
$V=D+Ds$ is a left $D$-frame and $DsD\subseteq V^2$.
Thus, if $DsD$ is infinite-dimensional for some $s$, then $V^q$ is
infinite-dimensional for every $q\geq2$. Therefore  $ \GKdim_D(S)=\infty$.
\end{remark}

\begin{corollary}
\label{cor:gk-ore-quotients}
If $S/D$ is automorphically finitely generated, then
\[
 \GKdim_D(S)=\atrdeg_D(S).
\]
\end{corollary}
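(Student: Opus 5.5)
The plan is to treat this as the special case $R=S$ of Theorem~\ref{thm:gk-finite}. An automorphically finitely generated $S$ is trivially finite as a left module over itself, with generator $1$. So the only thing to check is the local finiteness hypothesis \eqref{eq:gk-local-finiteness}: $\ldim{DsD}<\infty$ for every $s\in S$.

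To verify local finiteness, fix a presentation $S=P/I$ with its degree filtration $F$. Take $s\in S$ and choose $q$ with $s\in F_qS$. Then $DsD\subseteq D\,F_qS\,D$. It therefore suffices to show that $F_qS$ is stable under right multiplication by $D$, since then $DsD\subseteq F_qS$ and $\ldim{F_qS}=h_S(q)<\infty$ by Lemma~\ref{lem:standard}.

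Right $D$-stability holds already in $P$. By \eqref{eq:move-right}, or more directly by \eqref{eq:move-left} read in $P$, we have $x^\alpha d=\sigma^\alpha(d)x^\alpha+\sum_{|\beta|<|\alpha|}c_{\alpha,\beta}(d)x^\beta$, so $x^\alpha d\in F_{|\alpha|}P$. Hence $F_qP\,D\subseteq F_qP$. Passing to the quotient gives $F_qS\,D\subseteq F_qS$, and left stability is clear. (This is the same fact as \eqref{inclusion1} with $F_0S=D$.)

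With the hypotheses in place, Theorem~\ref{thm:gk-finite} with $R=S$ gives $\GKdim_D(S)=\atrdeg_D(S)$. As an alternative that avoids invoking the theorem, one can argue directly. Take $W=F_1S$, which is a generating frame satisfying $W^q=F_qS$. Theorem~\ref{thm:generating} then gives $\GKdim_D(S)=\lim\log h_S(q)/\log q$, and this equals $\atrdeg_D(S)$ by Theorem~\ref{thm:growth}.

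There is no real obstacle here. The only point needing care is the right $D$-stability of the filtration, which is supplied by the triangular coefficient movement.
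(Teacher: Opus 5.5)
Your proposal is correct and matches the paper's proof. You verify local finiteness through $DsD\subseteq (F_0S)(F_qS)(F_0S)\subseteq F_qS$ and then apply Theorem~\ref{thm:gk-finite} with $R=S$. Your alternative route, via the generating frame $F_1S$ with $(F_1S)^q=F_qS$ together with Theorem~\ref{thm:growth}, is the same as the paper's closing ``equivalently'' remark.
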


\begin{proof}
Let $F$ be an automorphic degree filtration on $S$.
For every $s\in F_qS$, multiplicativity gives
\[
 DsD\subseteq(F_0S)(F_qS)(F_0S)\subseteq F_qS.
\]
Thus $S$ satisfies \eqref{eq:gk-local-finiteness}.
Apply Theorem~\ref{thm:gk-finite} with $R=S$.
Equivalently, use $(F_1S)^q=F_qS$ and
Theorem~\ref{thm:growth} directly.
\end{proof}


\subsection{Why the hypotheses cannot simply be discarded}

\begin{example}[Algebraicity does not preserve the degree]
\label{ex:algebraic-not-stable}
Consider the following extension of rings
$$k\subset A\subset S,$$
where $k$ is a field and $ S=k[x,y,z]/(xy,xz)$ and $A=k[x]\subseteq S$. Here we use the same letters for the residue classes. Both $A$ and $S$ are automorphically finitely generated over $k$. We will show that $S$ is algebraic over $A$ but $\atrdeg_k(S)>\atrdeg_k(A)$. Thus, in Theorem \ref{thm:integral}, the condition that $S$ is left integral over $R$ cannot be replaced by the condition that $S$ is left algebraic over $R$.

Indeed, every $s\in S$ has
a unique expression
\[
 s=f(x)+g(y,z),\qquad g(0,0)=0.
\]
Because $xg(y,z)=0$, it satisfies
\[
 xs-xf(x)=0.
\]
The coefficient  $x\in A$ of $s$ is nonzero, so $S$ is left algebraic over
$A$. Nevertheless, $\atrdeg_k(A)=1$, whereas $y,z$ are algebraically
independent over $k$ and
\[
 h_S(q)=\binom{q+2}{2}+q,\qquad\atrdeg_k(S)=2.
\]
This also shows that $\{x\}$ is an independent family maximal under
inclusion but not of maximum size: every pair $x,s$ satisfies the
nonzero polynomial relation displayed above.
\end{example}

The following example shows that the hypothesis that \(R/D\) is automorphically finitely generated cannot be omitted from Theorem~\ref{thm:finite}.
{
\begin{example}
\label{ex:free-counterexample}
Let $k$ be a field, $A=k\langle x,y\rangle$ the free associative
algebra, and $B=k[x',y']$. Let $\pi:A\to B$ send $x$ to $x'$ and $y$
to $y'$, and define
\[
 S=A\times B,\qquad
 R=\{(a,\pi(a)):a\in A\},\qquad
 D=\{(\lambda,\lambda):\lambda\in k\}\ \simeq k.
\]
The elements $(1,0)$ and $(0,1)$ generate $S$ as a left $R$-module.

By Bergman's centralizer theorem \cite{Bergman1969}, for any nonscalar element $u\in A$, its centralizer $C_A(u)\simeq k[t]$, the polynomial ring in the variable $t$. 

Suppose that $u=(u_0,\pi(u_0))$ and $v=(v_0,\pi(v_0))$, where $\ u_0,v_0\in A$, are commuting independent elements of $R$ over $k$. Then $\{u_0,v_0\}$ is also a pair of commuting independent elements of $A$ over $k$. It follows that  $ \{u_0,v_0\}\subseteq C_A(u_0)\simeq k[t]$. This is a contradiction. Therefore 
$\atrdeg_D(R)\le1$. The element $(x,x')\in R$ has independent powers and is automorphic over $D$ with respect to $(\operatorname{id}_D,0)$. So $\atrdeg_D(R)=1$.
 
On the other hand, $(0,x')$ and $(0,y')$ are automorphically independent
in $S$ 
Thus $\atrdeg_D(S)\geq2$. The ring $R$ is not
automorphically finitely generated over $D$: since $D$ is central in $R$,
such a presentation would make $R$ commutative. 
\end{example}}

\section{Outer-independent automorphisms and normalization}
\label{sec:outer}

For $c\in D^\times$, define $\Ad_c(d)=cdc^{-1}$ for all $d\in D$, which is called an inner automorphism of $D$. Write
$\Out(D)=\Aut(D)/\Inn(D)$ where $\Inn(D)=\{\Ad_c:c\in D^\times\}$. Throughout this section
\[
 P=D[x_1,\ldots,x_m;\sigma_1,\ldots,\sigma_m]
\]
is a skew polynomial ring in commuting indeterminates $x_1,\dots,x_m$ with zero derivations and $\sigma_1,\dots,\sigma_m\in\Aut(D)$. Thus
\begin{equation}\label{eq:skew-multiplication}
 (a x^\alpha)(b x^\beta)
 =a\sigma^\alpha(b)x^{\alpha+\beta}
 \qquad(a,b\in D,\ \alpha,\beta\in\N^m).
\end{equation}
The term ``automorphic element'' still has the derivation-inclusive
meaning of Definition~\ref{def:automorphic}

We impose the additional
hypothesis
\begin{equation}\label{eq:outer-independence}
 \Phi:\Z^m\longrightarrow\Out(D),\qquad
 \alpha\longmapsto[\overline{\sigma}^\alpha]:=[\sigma_1^{\alpha_1}\cdots\sigma_m^{\alpha_m}]
 \quad\text{is injective}.
\end{equation}
We call this \emph{outer independence}. Equivalently,
$[\sigma^\alpha]\ne[\sigma^\beta]$ whenever
$\alpha,\beta\in\N^m$ are distinct: every integer vector is a
difference of two nonnegative vectors.

\subsection{Separation of monomial components}

\begin{lemma}[Coefficient separation]\label{lem:separation}
Assume \eqref{eq:outer-independence}. If $W\subseteq P$ is an
additive subgroup stable under left and right multiplication by
$D$, then for every
\[
 f=\sum_{\alpha\in U}c_\alpha x^\alpha\in W,
 \qquad c_\alpha\in D^\times,
\]
each monomial $x^\alpha$, $\alpha\in U$, belongs to $W$.
Consequently $W$ is a direct sum of monomial components $Dx^\alpha$.
\end{lemma}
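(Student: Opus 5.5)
We argue by induction on $|U|$, the number of monomials in the support of $f$. The case $|U|=1$ is immediate: if $f=c_\alpha x^\alpha$ with $c_\alpha\in D^\times$, then left multiplication by $c_\alpha^{-1}$ keeps us in $W$ and gives $x^\alpha\in W$.

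For the inductive step, suppose $|U|\ge2$ and fix two distinct exponents $\alpha\ne\beta$ in $U$. Using the left $D$-stability we first normalize, replacing $f$ by $c_\alpha^{-1}f$, so that $c_\alpha=1$. For $d\in D$, \eqref{eq:skew-multiplication} gives
\[
 fd=\sum_{\gamma\in U}c_\gamma\sigma^\gamma(d)x^\gamma .
\]
We then form the element
\[
 g_d:=\sigma^\alpha(d)f-fd
 =\sum_{\gamma\in U}\bigl(\sigma^\alpha(d)c_\gamma-c_\gamma\sigma^\gamma(d)\bigr)x^\gamma ,
\]
which lies in $W$ by the two-sided $D$-stability. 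Its $\alpha$-coefficient is $\sigma^\alpha(d)-\sigma^\alpha(d)=0$, so $g_d$ has support strictly smaller than $U$. Its $\beta$-coefficient vanishes for every $d$ exactly when $\sigma^\alpha(d)c_\beta=c_\beta\sigma^\beta(d)$ for all $d$. Substituting $e=\sigma^\alpha(d)$, this says $\sigma^\beta\sigma^{-\alpha}=\Ad_{c_\beta^{-1}}$ is inner. The automorphisms $\sigma_i$ commute, so $\sigma^\beta(\sigma^\alpha)^{-1}=\sigma^{\beta-\alpha}$, and therefore $\Phi(\beta-\alpha)=1$ in $\Out(D)$. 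Outer independence \eqref{eq:outer-independence} then forces $\alpha=\beta$, a contradiction.

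So we can choose $d$ with $g_d$ having a nonzero $\beta$-coefficient. After discarding zero coefficients, $g_d$ is a nonzero element of $W$ whose support is a proper subset of $U$ containing $\beta$. By the induction hypothesis, $x^\beta\in W$. Since $\beta\in U\setminus\{\alpha\}$ was arbitrary (we may fix $\alpha$ once and vary $\beta$), every $x^\beta$ with $\beta\ne\alpha$ lies in $W$. Hence
\[
 x^\alpha=f-\sum_{\beta\ne\alpha}c_\beta x^\beta\in W .
\]

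For the consequence, $Dx^\gamma\subseteq W$ whenever $x^\gamma\in W$, and every element of $W$ is a finite sum of its monomial terms, each of which lies in $W$ by the first part. Therefore
\[
 W=\bigoplus_{x^\gamma\in W}Dx^\gamma ,
\]
with directness inherited from the $D$-basis of $P$ (Proposition~\ref{prop:construction}).

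The main point to check is that the induction applies to $g_d$. Its coefficients need not be units, but over a division ring every nonzero coefficient is a unit, so after dropping zero terms $g_d$ has exactly the form required in the statement. The only other ingredient is the identification of the $\beta$-coefficient obstruction with $\sigma^{\beta-\alpha}$ being inner, which is where the commutativity of the $\sigma_i$ is used.
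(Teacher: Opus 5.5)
Your proof is correct and follows the paper's argument: the paper uses $g_d=fd-\tau(d)f$ with $\tau=\Ad_{c_\alpha}\sigma^\alpha$, which is your $\sigma^\alpha(d)f-fd$ (up to sign) after your normalization $c_\alpha=1$, then applies induction to $g_d$ and recovers $x^\alpha$ by subtraction, exactly as you do. Your reduction of the obstruction to $\Phi(\beta-\alpha)=1$ via commutativity of the $\sigma_i$ simply writes out the step that the paper states as $[\sigma^\alpha]=[\sigma^\beta]$.
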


\begin{proof}
Induct on $|U|$. For $|U|=1$, multiply $f$ by the inverse of its
coefficient on the left. Suppose $|U|>1$. Fix an $\alpha=(\alpha_1,\dots,\alpha_m)\in U$. For every $d\in D$, note that $x^\alpha d=x_1^{\alpha_1}\dots x_m^{\alpha_m}d=\sigma_1^{\alpha_1}\cdots\sigma_m^{\alpha_m}(d)x^\alpha=\sigma^{\alpha}(d)x^\alpha$. Put $\tau=\Ad_{c_{\alpha}}\sigma^{\alpha}$. Then
\[
 g_d:=fd-\tau(d)f\in W,
\]
and the coefficient of $x^{\alpha}$ in $g_d$ is zero. For
$\beta\in U\setminus\{\alpha\}$ the coefficient of $x^\beta$ is
\[
 c_\beta\sigma^\beta(d)-\tau(d)c_\beta=\big((\Ad_{c_{\beta}}\sigma^\beta-\Ad_{c_{\alpha}}\sigma^\alpha)(d)\big)c_\beta.
\]
It cannot vanish for every $d$: that would give
$\tau=\Ad_{c_\beta}\sigma^\beta$ and hence
$[\sigma^{\alpha}]=[\sigma^\beta]$, contradicting
\eqref{eq:outer-independence}.
Choose a $d$ for which this coefficient is nonzero. The support of
$g_d$ has fewer than $|U|$ members, so induction gives $x^\beta\in W$.
Doing this for every $\beta\ne\alpha$ and subtracting the
corresponding terms from $f$ gives $x^{\alpha}\in W$ as well.
\end{proof}

\begin{proposition}[Monomial ideals and affine-monomial elements]
\label{prop:monomial-rigidity}
Under \eqref{eq:outer-independence}, the following hold.
\begin{enumerate}
\item Every two-sided ideal $I\subseteq P$ has the form
\[
 I=\bigoplus_{\alpha\in\Lambda}Dx^\alpha
\]
for an upward-closed subset $\Lambda\subseteq\N^m$. Conversely,
every such subset defines a two-sided ideal. For a proper ideal,
$0\notin\Lambda$, $I\cap D=\{0\}$, and this $\Lambda$ equals
$\LE(I)$ for every monomial order.
\item Let $S=P/I$ be a proper quotient and put $\Delta=\N^m\setminus\Lambda$.
An element $s\in S\setminus D$ is automorphic if and only if it has
a unique expression
\begin{equation}\label{eq:affine-monomial}
 s=b+c\overline{x^\alpha},\qquad
 b\in D,\quad c\in D^\times,\quad
 \alpha\in\Delta\setminus\{0\}.
\end{equation}
Its associated pair is
\begin{equation}\label{eq:affine-associated-pair}
 \tau=\Ad_c\sigma^\alpha,\qquad
 \varepsilon(d)=bd-\tau(d)b\quad(d\in D).
\end{equation}
Every element of $D$ is automorphic as well, although its associated
pair need not be unique.
\end{enumerate}
\end{proposition}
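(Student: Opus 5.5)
Part (i) follows from Lemma~\ref{lem:separation} with $W=I$. A two-sided ideal is an additive subgroup stable under left and right multiplication by $D$, so $I=\bigoplus_{\alpha\in\Lambda}Dx^\alpha$, where $\Lambda=\{\alpha:x^\alpha\in I\}$. This set is upward closed because $x^\beta x^\alpha=x^{\alpha+\beta}\in I$. Conversely, for an upward-closed $\Lambda$, the span $\bigoplus_{\alpha\in\Lambda}Dx^\alpha$ is closed under left multiplication by $Dx^\beta$. For right multiplication, \eqref{eq:skew-multiplication} gives $(ax^\alpha)(bx^\beta)=a\sigma^\alpha(b)x^{\alpha+\beta}$, and $\alpha+\beta\in\Lambda$. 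If the ideal is proper then $1\notin I$, so $0\notin\Lambda$ and upward closure gives $I\cap D=0$. Finally, every nonzero $f\in I$ has its whole support in $\Lambda$, so $\lexp(f)\in\Lambda$; conversely each $x^\alpha$ with $\alpha\in\Lambda$ lies in $I$ and has leading exponent $\alpha$. Hence $\LE(I)=\Lambda$ for any monomial order.

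For part (ii), the ``if'' direction is a direct computation. Since $x^\alpha d=\sigma^\alpha(d)x^\alpha$, we get
\[
(b+c\overline{x^\alpha})d=bd+\Ad_c\sigma^\alpha(d)\,c\overline{x^\alpha}=\tau(d)s+\bigl(bd-\tau(d)b\bigr).
\]
The map $\varepsilon(d)=bd-\tau(d)b$ is a $\tau$-derivation by Remark~\ref{rem:uniqueness-warning}. The expression \eqref{eq:affine-monomial} is unique because the standard monomials $\overline{x^\gamma}$, $\gamma\in\Delta$, form a left $D$-basis by Lemma~\ref{lem:standard}. Since $\alpha\ne0$ lies in $\Delta$, we also have $s\notin D$, and Lemma~\ref{lem:compatibility}(i) then gives uniqueness of the associated pair.

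The ``only if'' direction is the main step. Let $s\notin D$ be automorphic with pair $(\tau,\varepsilon)$, and write uniquely $s=\sum_{\gamma\in U}c_\gamma\overline{x^\gamma}$ with $U\subseteq\Delta$ finite and $c_\gamma\ne0$. Compare coefficients in $sd=\tau(d)s+\varepsilon(d)$ in the standard basis, using that $\overline{x^\gamma}d=\sigma^\gamma(d)\overline{x^\gamma}$ and that this term stays in the same basis position. For each $\gamma\in U\setminus\{0\}$ this gives $c_\gamma\sigma^\gamma(d)=\tau(d)c_\gamma$ for all $d$, that is, $\tau=\Ad_{c_\gamma}\sigma^\gamma$. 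If two nonzero exponents $\gamma\ne\gamma'$ occurred in $U$, we would obtain $[\sigma^\gamma]=[\sigma^{\gamma'}]$, contradicting \eqref{eq:outer-independence}. Since $s\notin D$, at least one nonzero exponent occurs. So $U\setminus\{0\}=\{\alpha\}$, and $s=b+c\overline{x^\alpha}$ with $b=c_0$ (or $0$) and $c=c_\alpha$. The constant coefficient then forces $\varepsilon(d)=bd-\tau(d)b$.

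The point needing the most care is that coefficient comparison is legitimate in $S$ rather than only in $P$. This holds because $I$ is monomial: right multiplication by $d$ acts diagonally on the standard basis, with no reduction modulo $I$ mixing positions. The last assertion, that every element of $D$ is automorphic, is exactly Remark~\ref{rem:uniqueness-warning}.
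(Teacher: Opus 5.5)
Your proposal is correct and follows essentially the same route as the paper: Lemma~\ref{lem:separation} gives the monomial decomposition in (i), and in (ii) you compare coefficients in the standard basis and use outer independence to leave a single nonzero exponent. You also spell out details the paper leaves implicit, namely $\LE(I)=\Lambda$ and the uniqueness of the associated pair via Lemma~\ref{lem:compatibility}(i).
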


\begin{proof}
A two-sided ideal is stable under left and right multiplication by
$D$, so Lemma~\ref{lem:separation} proves its monomial decomposition.
Multiplication by the variables makes its support upward closed.
Conversely, \eqref{eq:skew-multiplication} shows that upward-closed
support is stable under multiplication on both sides by coefficients
and variables. The remaining assertions in part~(i) follow at once.

For part~(ii), write the standard expansion of a nonconstant
$s\in S$ as
\[
 s=b+\sum_{\alpha\in U}c_\alpha\overline{x^\alpha},
 \qquad\varnothing\ne U\subseteq\Delta\setminus\{0\},
 \quad c_\alpha\in D^\times.
\]
Suppose $sd=\tau(d)s+\varepsilon(d)$ for an associated pair
$(\tau,\varepsilon)$. Comparing the coefficients at each nonzero
exponent gives
\[
 c_\alpha\sigma^\alpha(d)=\tau(d)c_\alpha
 \qquad(d\in D,\ \alpha\in U),
\]
or \[
 \big(\Ad_{c_\alpha}\sigma^\alpha(d)-\tau(d)\big)c_\alpha=0
 \qquad(d\in D,\ \alpha\in U).
\]
Thus $[\tau]=[\sigma^\alpha]$ for all $\alpha\in U$. Outer
independence forces $U$ to have a single member, say $\alpha$.
The coefficient at exponent zero then gives
$\varepsilon(d)=bd-\tau(d)b$, and the nonzero coefficient gives
$\tau=\Ad_c\sigma^\alpha$. Uniqueness of the standard expansion
proves uniqueness in \eqref{eq:affine-monomial}.

Conversely, for any element $s$ of the form \eqref{eq:affine-monomial},
formula \eqref{eq:skew-multiplication} gives
\[
 sd=\tau(d)s+bd-\tau(d)b.
\]
The map $d\mapsto bd-\tau(d)b$ is a $\tau$-derivation, so $s$ is
automorphic with the pair in \eqref{eq:affine-associated-pair}.
The assertion for scalars was proved in
Remark~\ref{rem:uniqueness-warning}.
\end{proof}

In particular, under the stricter condition $sd=\tau(d)s$ with no
derivation term, every nonzero automorphic element is a scalar
multiple of a single monomial. For a nonconstant element in
\eqref{eq:affine-monomial}, a nonzero $b$ would make $\tau=\Ad_b$;
this would contradict outer independence because $\alpha\ne0$.
It is the derivation-inclusive definition that permits the constant
shift in \eqref{eq:affine-monomial}.

\subsection{A criterion and a description of all normalization subrings}

Let $S=P/I$ be proper and write $\bx_i=x_i+I$. Define
\begin{equation}\label{eq:surviving-axes}
 E=\{i:\bx_i\text{ is not nilpotent}\}
   =\{i:ke_i\in\Delta\text{ for every }k\in\N\},
 \qquad p=|E|,
\end{equation}
where $e_i$ is the $i$th standard basis vector of $\N^m$.
For $i\notin E$, choose $N_i\geq1$ such that $\bx_i^{N_i}=0$.
Every $J$ with $\N^J\subseteq\Delta$ is contained in $E$, so
Theorem~\ref{thm:growth} gives
\begin{equation}\label{eq:degree-less-axes}
 d:=\atrdeg_D(S)\leq p.
\end{equation}

\begin{theorem}[Normalization under outer independence]
\label{thm:outer-normalization}
Assume \eqref{eq:outer-independence}. For a proper quotient $S=P/I$,
the following are equivalent.
\begin{enumerate}
\item $S$ is automorphically normalizable over $D$.
\item $\atrdeg_D(S)=|E|$.
\item $\N^E\subseteq\Delta$.
\item The variables $(\bx_i)_{i\in E}$ are automorphically
independent and $S$ is finite as a left module over
$D[\bx_i:i\in E]$.
\end{enumerate}
When these conditions hold, every normalization tuple has, after
permutation, the form
\begin{equation}\label{eq:all-normalization-tuples}
 (b_i+c_i\bx_i^{a_i})_{i\in E},\qquad
 b_i\in D,\quad c_i\in D^\times,\quad a_i\geq1.
\end{equation}
Conversely, any commuting tuple of this form is a normalization
tuple. The normalization subrings are exactly
\begin{equation}\label{eq:all-normalization-subrings}
 B_{\boldsymbol a}=D[\bx_i^{a_i}:i\in E],\qquad a_i\geq1.
\end{equation}
For each of these subrings, $S$ has a left generating set of size at
most
\[
 \Bigl(\prod_{i\in E}a_i\Bigr)
 \Bigl(\prod_{i\notin E}N_i\Bigr).
\]
An empty product is interpreted as $1$.
\end{theorem}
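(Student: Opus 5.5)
I will prove the cycle (iii)$\Rightarrow$(iv)$\Rightarrow$(i)$\Rightarrow$(ii)$\Rightarrow$(iii), and then classify the normalization tuples using Proposition~\ref{prop:monomial-rigidity}(ii).

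For (ii)$\Rightarrow$(iii), Theorem~\ref{thm:growth} gives a set $J$ with $\N^J\subseteq\Delta$ and $|J|=d=|E|$. Such a $J$ lies inside $E$, so $J=E$.

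For (iii)$\Rightarrow$(iv), the hypothesis $\N^E\subseteq\Delta$ together with Lemma~\ref{lem:standard} shows that $(\bx_i)_{i\in E}$ is automorphically independent. For finiteness, I will show that $S$ is spanned on the left over $B_{\boldsymbol a}=D[\bx_i^{a_i}:i\in E]$ by the classes $\overline{x^\gamma}$ with $0\le\gamma_i<a_i$ for $i\in E$ and $0\le\gamma_i<N_i$ for $i\notin E$. This proves (iv) in the case $a_i=1$ and also gives the stated count. The reduction uses that $x^\alpha=x^{\alpha'}x^\gamma$, where $\alpha'_i=a_i\lfloor\alpha_i/a_i\rfloor$ for $i\in E$ and $\alpha'_i=0$ otherwise. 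If $\alpha_i\ge N_i$ for some $i\notin E$, then $\overline{x^\alpha}=0$, because $\bx_i^{N_i}=0$ and the variables commute. Since there are zero derivations, no lower-order terms appear.

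For (iv)$\Rightarrow$(i) there is nothing to prove. For (i)$\Rightarrow$(ii), let $B=D[s_1,\dots,s_n]$ be a normalization subring. Theorem~\ref{cor:normalization-number} gives $n=d$. By Proposition~\ref{prop:monomial-rigidity}(ii), each $s_j=b_j+c_j\overline{x^{\alpha^{(j)}}}$ with $\alpha^{(j)}\in\Delta\setminus\{0\}$.

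The key step, and the main obstacle, is to show that every $i\in E$ is ``covered''. I claim that $\{x^\beta:\beta\in\Delta\}$ restricted to the axis $\N e_i$ cannot be finitely generated over $B$ unless some $\alpha^{(j)}$ is a positive multiple of $e_i$. My approach is a grading argument. Since $I$ is monomial, $S$ is $\N^m$-graded, and $B$ lies in the subring generated by $D$ and the $\overline{x^{\alpha^{(j)}}}$. Any element of $B\cdot v$ (for $v$ in a finite generating set) has support in $\bigcup_k\bigl(\operatorname{supp}(v_k)+\text{monoid generated by the }\alpha^{(j)}\bigr)$. The axis $\N e_i$ lies in $\Delta$ and is infinite, so it must meet this finite union of translated monoids in infinitely many points. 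Hence some $\alpha^{(j)}$ is supported in $\{i\}$, that is, $\alpha^{(j)}=a e_i$ with $a\ge1$. The constant shifts $b_j$ do not change the monoid of supports, because $b_j\in D$.

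Since distinct $i\in E$ need distinct $j$, we get $n\ge|E|\ge d=n$. This gives (ii). It also shows that each $s_j$ has the form $b_j+c_j\bx_{i}^{a_i}$ with $i\in E$, after permutation, and that the tuple is exactly indexed by $E$; this is \eqref{eq:all-normalization-tuples}. The subring it generates equals $D[\bx_i^{a_i}]$ because $b_i\in D$ and $c_i\in D^\times$, which gives \eqref{eq:all-normalization-subrings}.

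For the converse, take a commuting tuple of the form \eqref{eq:all-normalization-tuples}. It generates $B_{\boldsymbol a}$. Its monomials are independent by a leading-term triangularity argument on $\N^E\subseteq\Delta$. Finiteness of $S$ over $B_{\boldsymbol a}$ is the spanning argument given above.
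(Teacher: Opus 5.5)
Your proposal is correct and follows essentially the same route as the paper. It uses the same cycle (iii)$\Rightarrow$(iv)$\Rightarrow$(i)$\Rightarrow$(ii)$\Rightarrow$(iii), the same support-covering argument $\Delta\subseteq U+H$ that forces a pure-power generator on each axis in $E$, and the same exponent-division spanning set giving the bound $\bigl(\prod_{i\in E}a_i\bigr)\bigl(\prod_{i\notin E}N_i\bigr)$. The differences are minor: you obtain $n=d$ from Theorem~\ref{cor:normalization-number}, whereas the paper only needs $r\le d$ from independence. You should also state explicitly, as the paper does, that members of an independent tuple lie outside $D$ (so Proposition~\ref{prop:monomial-rigidity}(ii) applies), and that the converse tuple $(b_i+c_i\bx_i^{a_i})_{i\in E}$ is automorphic by that same proposition.
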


\begin{proof}
We first prove (i)$\Rightarrow$(ii). Suppose that $S$ is finite as a
left module over a normalization subring
$B=D[s_1,\ldots,s_r]$. By
Proposition~\ref{prop:monomial-rigidity},
\[
 s_j=b_j+c_j\overline{x^{\alpha^{(j)}}},\qquad
 b_j\in D,\quad c_j\in D^\times,\quad
 \alpha^{(j)}\in\Delta\setminus\{0\}.
\]
No member of an independent tuple lies in $D$. Put
\[
 H=\N\alpha^{(1)}+\cdots+\N\alpha^{(r)}\subseteq\N^m.
\]
Since $b_j,c_j,c_j^{-1}\in D$,
\[
 B=D[\overline{x^{\alpha^{(1)}}},\ldots,
       \overline{x^{\alpha^{(r)}}}].
\]
The multiplication rule shows that every element of $B$ is
supported in $H\cap\Delta$.

Choose finitely many left $B$-module generators $v_1,\ldots,v_t$
for $S$, and let $U\subseteq\Delta$ be the union of their finite
standard supports. A product of standard monomials is either the
monomial whose exponent is sum of the exponents or zero. Coefficients remain
nonzero under the coefficient automorphisms. Consequently,
\begin{equation}\label{eq:support-cover}
 \Delta\subseteq U+H.
\end{equation}
Indeed, every standard monomial belongs to $S=\sum_\ell Bv_\ell$,
whereas every such sum has support contained in $U+H$.

Choose $k_0>\max\{|u|:u\in U\}$. For each $i\in E$, the exponent
$k_0e_i$ lies in $\Delta$, so
\[
 k_0e_i=u+\sum_{j=1}^r n_j\alpha^{(j)},
 \qquad u\in U,\quad n_j\in\N.
\]
All coordinates are nonnegative. Therefore every
$\alpha^{(j)}$ with $n_j>0$ is supported on the $i$th axis.
The choice of $k_0$ guarantees that some $n_j$ is positive. Hence
\begin{equation}\label{eq:pure2}
 \text{for each }i\in E\text{, some }j\text{ satisfies }
 \alpha^{(j)}=a e_i\quad\text{with }a\geq1.
\end{equation}
Different axes require different members of the tuple. It follows
that $p\leq r$. Independence gives $r\leq d$, and
\eqref{eq:degree-less-axes} gives $d\leq p$. Thus $p=r=d$.
In particular, only now can one conclude that \emph{every}
$\alpha^{(j)}$ is a pure power exponent on a distinct axis in $E$.
This proves both (ii) and the necessary tuple form
\eqref{eq:all-normalization-tuples}.

For (ii)$\Rightarrow$(iii), Theorem~\ref{thm:growth} provides a set
$J$ of size $d=p$ with $\N^J\subseteq\Delta$. Since $J\subseteq E$,
one has $J=E$.

If (iii) holds, the coordinate variables in $E$ are independent.
Every nonzero standard monomial has exponent less than $N_i$ in
each coordinate $i\notin E$, since $\bx_i^{N_i}=0$. As the
variables commute, $S$ is generated on the left over
$D[\bx_i:i\in E]$ by
\begin{equation}\label{eq:nilpotent-module-generators}
 \left\{\prod_{i\notin E}\bx_i^{b_i}:0\leq b_i<N_i\right\}.
\end{equation}
This proves (iii)$\Rightarrow$(iv); (iv)$\Rightarrow$(i) is the
definition.

For the subring classification, constants and invertible scalar
factors do not change the generated $D$-subring:
\[
 D[b_i+c_i\bx_i^{a_i}:i\in E]
 =D[\bx_i^{a_i}:i\in E]=B_{\boldsymbol a}.
\]
Conversely, under (iii), the tuple $(\bx_i^{a_i})_{i\in E}$ is
automorphically independent for every choice of positive integers
$a_i$. Division of exponents by $a_i$ on the axes in $E$ shows that
$S$ is generated over $B_{\boldsymbol a}$ by
\[
 \left\{
  \Bigl(\prod_{i\in E}\bx_i^{r_i}\Bigr)
  \Bigl(\prod_{i\notin E}\bx_i^{b_i}\Bigr):
  0\leq r_i<a_i,\quad 0\leq b_i<N_i
 \right\}.
\]
This proves the converse classification and the generator bound.

Finally, a commuting tuple as in
\eqref{eq:all-normalization-tuples} is automorphic by
Proposition~\ref{prop:monomial-rigidity}. Its ordered monomials have
distinct leading exponents $\sum_{i\in E}a_i\beta_i e_i$ and
nonzero leading coefficients. Those exponents all lie in
$\N^E\subseteq\Delta$. A largest-leading-exponent argument therefore
proves left independence. Since its generated subring is
$B_{\boldsymbol a}$, the tuple is a normalization tuple.
\end{proof}

\begin{corollary}[Domain quotients]\label{cor:domain-quotients}
Under \eqref{eq:outer-independence}, a proper quotient $P/I$ is a
domain if and only if $I$ is generated by a subset of the variables.
In that case, the quotient is a skew polynomial ring on the remaining
variables and is automorphically normalizable.
\end{corollary}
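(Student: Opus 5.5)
By Proposition~\ref{prop:monomial-rigidity}(i), every two-sided ideal of $P$ is monomial: $I=\bigoplus_{\alpha\in\Lambda}Dx^\alpha$ with $\Lambda$ upward closed, and $S=P/I$ has left $D$-basis $(\overline{x^\alpha})_{\alpha\in\Delta}$. Multiplication in $S$ is given by \eqref{eq:skew-multiplication}: $(a\overline{x^\alpha})(b\overline{x^\beta})=a\sigma^\alpha(b)\overline{x^{\alpha+\beta}}$ when $\alpha+\beta\in\Delta$, and $0$ otherwise. Because $\sigma^\alpha$ is an automorphism, a product of two nonzero monomial terms is zero exactly when its exponent lands in $\Lambda$. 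The plan is to show that $S$ is a domain if and only if $\Lambda$ is generated by unit vectors $e_i$, that is, $\Lambda=\bigcup_{i\in K}(e_i+\N^m)$ for some $K$. This is equivalent to $I$ being generated by $\{x_i:i\in K\}$.

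For the direction ($\Leftarrow$), suppose $I=(x_i:i\in K)$. Then $\Delta=\N^{K^c}$. The quotient $S$ is $D[x_j:j\notin K;\sigma_j]$, because the relations among the surviving variables are exactly the skew relations. It is a domain by the leading-term argument: for nonzero $f,g$, the leading exponent of $fg$ is $\lexp f+\lexp g\in\N^{K^c}=\Delta$, with coefficient $a\sigma^\alpha(b)\neq0$. Since $\N^{K^c}=\N^E\subseteq\Delta$, Theorem~\ref{thm:outer-normalization}\,(iii)$\Rightarrow$(iv) gives normalizability. In fact $S$ is free over $D[\bx_j:j\notin K]$ with basis $\{1\}$.

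For the direction ($\Rightarrow$), suppose $S$ is a domain and let $\gamma$ be a minimal element of $\Lambda$. Note $\gamma\neq0$ because $I$ is proper. If $|\gamma|\ge2$, write $\gamma=\beta+\beta'$ with $\beta,\beta'\neq0$. Both $\beta$ and $\beta'$ lie componentwise below $\gamma$ and differ from it, so by minimality they lie in $\Delta$. Then $\overline{x^\beta}\,\overline{x^{\beta'}}=\overline{x^\gamma}=0$ while both factors are nonzero, contradicting the domain assumption. Hence every minimal generator of $\Lambda$ is a unit vector $e_i$. By Dickson's lemma and upward closure, $\Lambda=\bigcup_{i\in K}(e_i+\N^m)$, so $I=\bigoplus_{\alpha\in\Lambda}Dx^\alpha$ is the ideal generated by $\{x_i:i\in K\}$.

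No step here is a real obstacle. The only point needing care is the identification of the quotient by the variables in $K$ with the skew polynomial ring on the remaining variables. This uses the fact that the basis $\overline{x^\alpha}$ for $\alpha\in\N^{K^c}$ multiplies exactly as in $D[x_j:j\notin K;\sigma_j]$, with no relations coming from $I$. Proposition~\ref{prop:construction} then gives the isomorphism.
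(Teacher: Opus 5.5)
Your proof is correct and follows essentially the same route as the paper. Both arguments rest on the monomial description of $I$ from Proposition~\ref{prop:monomial-rigidity} and on the fact that a zero product of nonzero standard monomials contradicts the domain property. You split a minimal element $\gamma$ of $\Lambda$ as $\beta+\beta'$, whereas the paper factors each $x^\alpha\in I$ into coordinate variables and concludes that some $x_i$ in its support lies in $I$; your converse, via the leading-term argument and the coordinate tuple as a normalization tuple, matches the paper's.
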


\begin{proof}
Suppose that $P/I$ is a domain and let $J=\{i:x_i\in I\}$.
If $x^\alpha\in I$, its image is a zero product of coordinate
variables. Since $I$ is proper, $\alpha\ne0$. Some variable in its
support must have zero image, so $x^\alpha\in(x_i:i\in J)$.
Proposition~\ref{prop:monomial-rigidity} gives
$I=(x_i:i\in J)$. Conversely, such a quotient is the skew polynomial
ring over $D$ on the complementary variables, and its leading-term
multiplication rule shows that it is a domain. Its coordinate tuple
is itself a normalization tuple.
\end{proof}

\begin{example}[A supply of outer-independent actions]
\label{ex:translations}
Let $D=\Q(t_1,\ldots,t_m)$ and define
\[
 \sigma_i(t_j)=t_j+\delta_{ij}.
\]
The automorphisms commute. Since $D$ is commutative,
$\Inn(D)=\{\id_D\}$, and $\sigma^\alpha$ sends $t_j$ to
$t_j+\alpha_j$ for $\alpha\in\Z^m$. Characteristic zero implies
\eqref{eq:outer-independence}. Thus the criterion and the complete
description of normalization subrings apply in every number of
variables, without a finiteness assumption on the automorphism
orders.
\end{example}

\section{Computation and examples}\label{sec:examples}

\subsection{The support formula and a computation}

\begin{proposition}[Computation from leading supports]
\label{prop:support-formula}
Let $S=P/I$ be an automorphic presentation and let
$\gamma^{(1)},\ldots,\gamma^{(r)}$ be the minimal elements of
$\LE(I)$. Put $E_\ell=\Supp(\gamma^{(\ell)})$ and $\mathcal{H}=\{E_1,\dots,E_r\}$ Then
\begin{equation}\label{eq:support-formula}
 \atrdeg_D(S)
  =\max\{|J|:J\subseteq\{1,\dots,m\},\ E_\ell\nsubseteq J\text{ for every }\ell\}
 =m-\tau(\mathcal H),
\end{equation}
where $\tau(\mathcal H)$ is the smallest size of a subset of $\{1,\ldots,m\}$ meeting every $E_\ell$. For an empty family, $\tau(\varnothing)=0$.
\end{proposition}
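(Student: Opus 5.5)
By Theorem~\ref{thm:growth}, $\atrdeg_D(S)=d_\Delta=\max\{|J|:\N^J\subseteq\Delta\}$. It therefore suffices to prove two combinatorial facts: first, that $\N^J\subseteq\Delta$ holds if and only if no $E_\ell$ is contained in $J$; second, that the resulting maximum equals $m-\tau(\mathcal H)$.

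For the first equivalence, I use that $\Lambda=\LE(I)$ is upward closed (Lemma~\ref{lem:standard}(i)), so $\Lambda=\bigcup_\ell(\gamma^{(\ell)}+\N^m)$. Suppose $E_\ell\subseteq J$ for some $\ell$. Then $\gamma^{(\ell)}\in\N^J\cap\Lambda$, so $\N^J\not\subseteq\Delta$. Conversely, suppose $\N^J\not\subseteq\Delta$, and pick $\beta\in\N^J\cap\Lambda$. Some $\gamma^{(\ell)}\le\beta$ componentwise, so $\Supp(\gamma^{(\ell)})\subseteq\Supp(\beta)\subseteq J$. This gives the first equality in \eqref{eq:support-formula}. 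When $r=0$ we have $\Lambda=\varnothing$, the condition is vacuous, $J=\{1,\dots,m\}$ is allowed, and both sides equal $m$, consistent with $\tau(\varnothing)=0$. If $I\neq0$ is proper then $0\notin\Lambda$, so every $E_\ell$ is nonempty.

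For the second equality, I use complementation $T=\{1,\dots,m\}\setminus J$. The condition $E_\ell\not\subseteq J$ says exactly that $E_\ell\cap T\neq\varnothing$. So $J$ is admissible precisely when its complement is a transversal (hitting set) of $\mathcal H$. Maximizing $|J|=m-|T|$ is then the same as minimizing $|T|$ over hitting sets, which gives $m-\tau(\mathcal H)$. A hitting set always exists: $T=\{1,\dots,m\}$ works because each $E_\ell$ is nonempty.

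I do not expect a real obstacle. The only point needing care is the nonemptiness of the supports $E_\ell$, which comes from properness of $I$ and is needed so that $\tau(\mathcal H)$ is finite. I would also note that $\LE(I)$ depends on the chosen degree-compatible order, but the formula holds for whichever order is fixed, since Theorem~\ref{thm:growth} applies to each one.
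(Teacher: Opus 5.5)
Your proposal is correct and follows the same route as the paper: you reduce to $d_\Delta$ via Theorem~\ref{thm:growth}, show that $\N^J\subseteq\Delta$ holds exactly when no minimal leading exponent is supported in $J$, and pass to complements to obtain $m-\tau(\mathcal H)$. You also fill in details the paper leaves implicit, namely the upward-closure argument and the nonemptiness of each $E_\ell$ coming from properness of $I$, and both are handled correctly.
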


\begin{proof}
The inclusion $\N^J\subseteq\Delta$ holds if and only if no
$\gamma^{(\ell)}$ is supported in $J$. Thus the first equality is
Theorem~\ref{thm:growth}.  The condition that no $E_\ell$ be contained in $J$ is equivalent to the complement of $J$ meeting every $E_\ell$, proving the second equality.
\end{proof}

\begin{example}[A four-variable computation]\label{ex:four-variables}
Let
\[
 P=D[x_1,x_2,x_3,x_4;\sigma_1,\sigma_2,\sigma_3,\sigma_4],\qquad
 I=(x_1x_2,x_2x_3,x_4^3).
\]
These monomials generate a two-sided ideal for any commuting
$\sigma_i$. The leading supports are
\[
 \{1,2\},\qquad\{2,3\},\qquad\{4\}.
\]
A minimum set meeting all three is $\{2,4\}$, so
$\atrdeg_D(P/I)=4-\#\{2,4\}=2$, attained by $\bx_1,\bx_3$.
For the first three variables, the standard monomials are all
monomials in $\bar{x}_1,\bar{x}_3$, together with the positive powers of $\bar{x}_2$.
The exponent of $\bar{x}_4$ can be $0,1,$ or $2$ corresponding to the value of $j$ in the following sum. Consequently
\[
 h_{P/I}(q)=\sum_{j=0}^{\min\{2,q\}}
       \left(\binom{q-j+2}{2}+q-j\right),
\]
and for $q\geq2$,
\[
 H_{P/I}(q)=\frac{3q^2+9q-4}{2}.
\]
If the action is outer-independent, then $E=\{1,2,3\}$.
Its size is three, so this two-dimensional quotient is not
automorphically normalizable. In particular, computing the
relative dimension alone does not settle existence of a
normalization without further structural information.
\end{example}

\subsection{Dimension and normalization need not coincide as existence statements}

\begin{example}[Two quotients of dimension one]\label{ex:contrasting-quotients}
Take $m=2$ in Example~\ref{ex:translations}, and let
$P=D[x_1,x_2;\sigma_1,\sigma_2]$. The quotient
\[
 S_1=P/(x_1x_2)
\]
has standard basis $1,\bx_1,\bx_1^2,\ldots,
\bx_2,\bx_2^2,\ldots$. Thus
\[
 h_{S_1}(q)=2q+1,\qquad\atrdeg_D(S_1)=1,
\]
but both coordinate variables are nonnilpotent. It is not
automorphically normalizable. The obstruction for this quotient is
also covered by \cite[Lemma~5.10]{ParanVo2025}.

By contrast, for $N\geq1$ the quotient
\[
 S_2=P/(x_2^N)
\]
has $\atrdeg_D(S_2)=1$ and $E=\{1\}$. It is a free left module of
rank $N$ over $D[\bx_1]$, with basis
$1,\bx_2,\ldots,\bx_2^{N-1}$. Indeed, the corresponding products
with powers of $\bx_1$ are exactly its standard monomials.
The two rings have the same automorphic transcendence degree, but
only the second realizes that degree by a finite normalization.
\end{example}

\begin{example}[A skew Laurent obstruction]\label{ex:Laurent}
Let $D=\Q(t)$ and $\sigma(t)=t+1$. Consider the skew Laurent
polynomial ring
\[
 S=D[x^{\pm1};\sigma]
   =\bigoplus_{j\in\Z}Dx^j,\qquad
 (a x^i)(b x^j)=a\sigma^i(b)x^{i+j}
 \quad(a,b\in D,\ i,j\in\Z).
\]
In particular, $xd=\sigma(d)x$ and
$x^{-1}d=\sigma^{-1}(d)x^{-1}$ for $d\in D$. The presentation
\[
 S\cong D[u,v;\sigma,\sigma^{-1}]/(uv-1),\qquad
 \overline u\longmapsto x,\quad \overline v\longmapsto x^{-1},
\]
exhibits $S/D$ as a compatible Ore quotient with zero derivations.
The induced total-degree filtration is
\[
 F_qS=\bigoplus_{j=-q}^{q}Dx^j,
\]
so Theorem~\ref{thm:growth} gives
\[
 h_S(q)=2q+1\quad(q\in\N),\qquad \atrdeg_D(S)=1.
\]

We prove that $S$ is not skew normalizable over $D$, allowing
derivation terms in the normalization generators. Let
$f=\sum_{j\in\Z}c_jx^j\in S\setminus D$ be automorphic with an
associated pair $(\tau,\varepsilon)$, so that
\[
 fd=\tau(d)f+\varepsilon(d)\qquad(d\in D).
\]
For every $j\ne0$ with $c_j\ne0$, comparison of the $x^j$
coefficients yields
\[
 c_j\sigma^j(d)=\tau(d)c_j\qquad(d\in D).
\]
Since $D$ is commutative, this implies $\tau=\sigma^j$.
The powers of $\sigma$ are distinct, because $\sigma^j(t)=t+j$.
There is therefore exactly one nonzero exponent in the support of
$f$. Writing it as $a$, we obtain
\[
 f=b+cx^a,\qquad b\in D,\quad c\in D^\times,\quad
 a\in\Z\setminus\{0\}.
\]
The constant coefficient gives
\[
 \tau=\sigma^a,\qquad
 \varepsilon(d)=bd-\sigma^a(d)b\quad(d\in D).
\]
Conversely, every element of this form is automorphic with this
associated pair. In particular, the constant term $b$ cannot be
discarded under the present definition, but it does not change the
generated subring:
\[
 D[f]=D[x^a]=\bigoplus_{n\in\N}Dx^{an}.
\]

Suppose that $S$ admitted a skew normalization. By
Theorem~\ref{cor:normalization-number}, a normalization tuple would
have exactly one member $f$. This member lies outside $D$, so the
normalization subring would be $B=D[x^a]$ for some $a\ne0$.
Choose finitely many left $B$-module generators $v_1,\ldots,v_r$
of $S$, and let
\[
 A=\bigcup_{\ell=1}^{r}\Supp_x(v_\ell)\subseteq\Z,
 \qquad
 \Supp_x\left(\sum_jd_jx^j\right)=\{j:d_j\ne0\}.
\]
The set $A$ is finite. The Laurent multiplication rule shows that
every element of $\sum_{\ell=1}^r Bv_\ell$ has support contained in
\[
 A+a\N=\{u+an:u\in A,\ n\in\N\}.
\]
If $a>0$, this set is bounded below; if $a<0$, it is bounded above.
In either case it cannot contain all of $\Z$, whereas $x^j\in S$
for every $j\in\Z$. This contradicts finite generation over $B$.
Thus $S$ is not skew normalizable over $D$.

For the automorphism-only notion of normalization, the negative
assertion is a special case of \cite[Lemma~5.6]{ParanVo2025}.
The preceding argument establishes it for the derivation-inclusive
notion as well. Notice also that the action map
\[
 \Z^2\longrightarrow\Out(D),\qquad
 (r,s)\longmapsto[\sigma^{r-s}],
\]
has kernel $\Z(1,1)$. Thus the outer-independence hypothesis fails
for this presentation, and
Theorem~\ref{thm:outer-normalization} does not apply to it.
\end{example}.



\subsection{Limits of the automorphic condition}

\begin{example}
\label{ex:one-generator}
Let $F=\Q(t)$, let $\tau(t)=-t$, and set
\[
 S=F[x_1,x_2;\id_F,\tau],\qquad s=x_1+x_2.
\]
The powers of $s$ are left $F$-linearly independent: $s^j$ is
homogeneous of degree $j$ and has nonzero coefficient of $x_1^j$. Denote  by $F[s]$ the subring of $S$ generated by $F$ and $s$. Since
\[
 x_1=(2t)^{-1}(ts+st),\qquad
 x_2=(2t)^{-1}(ts-st),
\] we have $$F[s]=S.$$
The element $s$ is not automorphic over $F$. Otherwise, comparing
the $x_1$ and $x_2$ coefficients in $st=\rho(t)s+\epsilon(t)$ would force
$\rho(t)=t$ and $\rho(t)=-t$ simultaneously.

There is no contradiction with dimension: the pair $x_1,x_2$ is
still independent, and
$\mathcal{L}_F(S)=\atrdeg_F(S)=2$.
What fails is the equality
\[
 F[s]=\sum_{j\geq0}Fs^j.
\]
The right-hand side does not contain $x_1$, as is seen by comparing
homogeneous degrees and then the two degree-one coefficients.
Thus the automorphic requirement controls the structure of the
generated subring, not the minimal number of arbitrary ring
generators over the coefficient division ring.
\end{example}

{


}
\begin{example}[Strict inequalities between the three dimensions]
\label{ex:derivation}
Let $D=\Q(t)$ and let $\varphi:D\to D$ be the injective
endomorphism defined by
\[
 \varphi(r(t))=r(t^2).
\]
Define
\[
 S=\bigoplus_{n\geq0}x^nD
\]
with multiplication determined by
\[
 (x^i a)(x^j b)=x^{i+j}\varphi^j(a)b
 \qquad(a,b\in D,\ i,j\geq0).
\]
Thus $dx=x\varphi(d)$ for every $d\in D$; in particular,
$tx=xt^2$. Equivalently, $S$ is the opposite ring of the
skew polynomial ring defined by the injective, nonsurjective
endomorphism $\varphi$. We claim that
\[
 \atrdeg_D(S)=0,\qquad
 \dL(S)=1,\qquad
 \GKdim_D(S)=\infty.
\]

First, suppose that an element
\[
 f=\sum_{i=0}^{m}x^i a_i\notin D,
 \qquad m\geq1,\quad a_m\ne0,
\]
is automorphic over $D$, with associated pair
$(\rho,\varepsilon)$. Comparing the coefficients of $x^m$ in
$fd=\rho(d)f+\varepsilon(d)$ gives
\[
 a_m d=\varphi^m(\rho(d))a_m
 \qquad(d\in D).
\]
Since $D$ is commutative, this implies
$d=\varphi^m(\rho(d))$ for every $d\in D$. This is impossible,
because
\[
 \varphi^m(D)=\Q(t^{2^m})\subsetneq\Q(t)=D.
\]
Thus every automorphic element of $S$ lies in $D$, and hence
$\atrdeg_D(S)=0$. 

Next, the powers of $x$ are left $D$-linearly independent:
indeed,
\[
 \sum_{i=0}^{N}d_i x^i
 =\sum_{i=0}^{N}x^i\varphi^i(d_i)=0
\]
forces every $d_i$ to vanish. Therefore $\dL(S)\geq1$.
To prove the reverse inequality, we bound the growth of
centralizers over $\Q$.

Fix $f=\sum_{i=0}^{m}x^i a_i\notin D$, with $a_m\ne0$, and put
\[
 C=C_S(f),\qquad
 C_{\leq n}=C\cap\bigoplus_{i=0}^{n}x^iD.
\]
If $g=\sum_{j=0}^{n}x^j b_j\in C_{\leq n}$, comparison of the
coefficients of $x^{m+n}$ in $fg=gf$ yields
\[
 \varphi^n(a_m)b_n=\varphi^m(b_n)a_m.
\]
The ratio of any two nonzero solutions for $b_n$ is fixed by
$\varphi^m$. Moreover,
\[
 \operatorname{Fix}(\varphi^m)=\Q.
\]
Indeed, if a nonconstant rational function $r=p/q$ is written
in lowest terms, substitution $t\mapsto t^{2^m}$ multiplies
$\max\{\deg p,\deg q\}$ by $2^m$, so
$r(t^{2^m})=r(t)$ is impossible.
Consequently, the possible coefficients $b_n$ form a
$\Q$-vector space of dimension at most one. The coefficient
map on $C_{\leq n}$ has kernel $C_{\leq n-1}$, where
$C_{\leq-1}=0$, and hence
\[
 \dim_{\Q}C_{\leq n}\leq n+1.
\]

If $f,g$ were a commuting pair with left $D$-linearly
independent indexed monomials, then $f,g\notin D$.
Set $c=\max\{\deg_x f,\deg_x g\}$. For every $q\geq0$, the
monomials $f^i g^j$ with $i+j\leq q$ would be
$\Q$-linearly independent elements of $C_{\leq cq}$.
This would imply
\[
 \binom{q+2}{2}\leq\dim_{\Q}C_{\leq cq}\leq cq+1,
\]
a contradiction for sufficiently large $q$. Thus
$\dL(S)=1$.

Finally, take the left $D$-frame $V=D+Dx$. For every $n\geq0$,
the left action of $D$ on $x^nD$ is given by
\[
 d(x^n a)=x^n\varphi^n(d)a.
\]
It follows that
\[
 \ldim{x^nD}
 =[D:\varphi^n(D)]
 =[\Q(t):\Q(t^{2^n})]
 =2^n.
\]
Since $x^nD\subseteq V^{n+1}$, we obtain
\[
 \ldim{V^q}\geq2^{q-1}\qquad(q\geq1).
\]
Therefore
\[
 \GKdim_D(S)
 \geq
 \limsup_{q\to\infty}
 \frac{\log\ldim{V^q}}{\log q}
 =\infty.
\]
Each $V^q$ is nevertheless finite-dimensional, since it is
contained in $\bigoplus_{i=0}^{q}x^iD$.
Thus the infinite dimension comes from exponential growth.
In particular, $S/D$ cannot be a compatible Ore quotient in
the sense of Definition~\ref{def:afg}.
\end{example}

\end{document}